\newtheorem{theorem}{Theorem}[section]
\newtheorem{lemma}[theorem]{Lemma}
\newtheorem{definition}[theorem]{Definition}
\newtheorem{corollary}[theorem]{Corollary}
\theoremstyle{remark}
\newtheorem*{remark}{Remark}
\numberwithin{equation}{section}
\newcommand{\R}{\mathbb{R}}
\newcommand{\C}{\mathbb{C}}
\newcommand{\Z}{\mathbb{Z}}
\newcommand{\N}{\mathbb{N}}
\newcommand{\Pb}[1]{\mathbb{P}\left[#1\right]}
\newcommand{\E}[1]{\mathbb{E}\left[#1\right]}
\newcommand{\ET}[1]{\mathbb{E}_{\Theta}\left[#1\right]}
\newcommand{\PT}[1]{\mathbb{P}_{\Theta}\left[#1\right]}
\newcommand{\eps}{\epsilon}
\DeclareMathOperator{\res}{res}
\renewcommand{\Re}{\mathrm{Re}}
\renewcommand{\Im}{\mathrm{Im}}
\newcommand{\floor}[1]{\lfloor #1 \rfloor}
\newcommand{\Sn}{\mathfrak{S}_n}
\begin{document}
\title[Random permutations with logarithmic cycle weights]{Random permutations with logarithmic cycle weights}
\date{\today}

\author[N. Robles]{Nicolas Robles}
\address{Department of Mathematics, University of Illinois, 1409 West Green Street, Urbana, IL 61801, United States \textnormal{and} Wolfram Research Inc, 100 Trade Center Dr, Champaign, IL 61820, United States}
\email{nirobles@illinois.edu}
\email{nicolasr@wolfram.com}

\author[D. Zeindler]{Dirk Zeindler}
\address{Department of Mathematics and Statistics, Lancaster University, Fylde College, Bailrigg, Lancaster LA1 4YF, United Kingdom}
\email{d.zeindler@lancaster.ac.uk}

\begin{abstract}
We consider random permutations on $\Sn$ with logarithmic growing cycles weights 
and study asymptotic behavior as the length $n$ tends to infinity. 
We show that the cycle count process converges to a vector of independent Poisson variables and also compute the total variation distance between both processes. 
Next, we prove a central limit theorem for the total number of cycles.
Furthermore we establish 
a shape theorem and a functional central limit theorem for the Young diagrams associated to random  permutations under this measure.
We prove these results using tools from complex analysis and combinatorics. 
In particular we have to apply the method of singularity analysis to generating functions 
of the form $\exp\left( (-\log(1-z))^{k+1}  \right)$ with $k\geq 1$, which have not yet been studied in the literature. 
\end{abstract}

\keywords{random permutations, cycle counts, total variation distance, total number of cycles, singularity analysis, limit shape, functional central limit theorem, Tauberian theorem}
\subjclass{60F17, 60F05, 60C05, 40E05}

\maketitle


\section{Introduction}
\label{sec_intro}

Let $\Sn$ be the symmetric group of all permutations on elements
$1,\dots,n$. For any permutation $\sigma\in \Sn$, denote by
$C_m=C_m(\sigma)$ the \emph{cycle counts}, that is, the number of
cycles of length $m=1,\dots,n$ in the cycle decomposition
of~$\sigma$; clearly
\begin{equation}\label{eq:sumC}
C_m\ge0 \quad(m\ge1),\qquad \sum_{m=1}^n m\, C_m=n.
\end{equation}
Here we study random permutations with respect to the following probability measure
\begin{definition}
\label{def:Pb_measure}
Let $\Theta = \left(\theta_m  \right)_{m\geq1}$ be given, with $\theta_m\geq0$ for every $m\geq 1$.
We define for $\sigma\in\Sn$ the weighted measures on $\Sn$ as
\begin{align}
  \PT{\sigma}
  :=
  \frac{1}{h_n n!} \prod_{m=1}^n \theta_m^{C_m}
  \label{eq:PTheta_with_partition}
\end{align}
with $h_n = h_n(\Theta)$ a normalization constant and $h_0:=1$.
\end{definition}
This measure has received a lot of attention in recent years and has been studied by many authors. 
An overview can be found  in \cite{ErUe11}.
Classical cases of $\mathbb{P}_\Theta$ are the uniform measure ($\theta_m \equiv 1$) and the Ewens measure ($\theta_m \equiv \theta$). 
The uniform measure is well studied and has a long history
(see e.g. the first chapter of \cite{ABT02} for a detailed account with references).
 The Ewens measure originally appeared in population genetics, 
see \cite{Ew72}, but has also various applications through its connection with Kingman's coalescent process, see \cite{Ho87}.

The motivation to study the measure $\mathbb{P}_\Theta$ has its origins in  mathematical physics.
Explicitly, it occurred in the context of the Feynman-Kac representation of the dilute Bose gas and 
it has been proposed in connection with the study of the Bose-Einstein condensation (see e.g. \cite{BeUeVe11} and \cite{ErUe11}). 
An important question in this context, which is
also interesting on its own right, is the possible emergence of cycles with a cycle length with order of magnitude $n$ as $n\to\infty$.
It is clear that the asymptotic behaviour of the measure $\mathbb{P}_\Theta$ as $n\to\infty$  strongly depends on the sequence $\Theta = \left(\theta_m  \right)_{m\geq1}$.
In the current literature, only the cases $\theta_m\approx \vartheta$ and $\theta_m\sim m^\gamma$ with $\gamma>0$ are well studied.
It is known that in the case $\theta_m\approx \vartheta$ there are cycles of order $n$ in the limit 
and that the longest cycles follow a Poisson-Dirichlet distribution, see \cite{Ki77, ShVe77, ErUe11, BeUe10}.
On the other hand, it was shown in \cite{CiZe13, ErUe11} that in the case $\theta_m\sim m^\gamma$  most cycles have a cycle length of order $n^{\frac{1}{1+\gamma}}$ 
and thus are no cycles of order $n$ in the limit. Furthermore, it was established in \cite{CiZe13} that the Young diagrams associated to random permutations
converges in this situation to a limit shape.
In this paper, we consider the cycle weights of the form 
\begin{align}
 \theta_m =  \log^k m \text{ for } m\in\N \text{ and some }k\in\N.
 \label{eq:def_theta}
\end{align}
We use in fact sightly more general weights than in \eqref{eq:def_theta} and our exact assumptions are given in Section~\ref{sec:assumption_theta}.
Weights of the form \eqref{eq:def_theta} have not been studied in the literature and our motivation to consider these weights is the following question. 
Are there any cycles of order $n$ in the limit if one is considering slowly growing cycles weights $\theta_m$ as $m\to\infty$?
We show in this paper that the length of typical cycle under this measure has the order of magnitude $n/\log^k n$ (see Theorem~\ref{thm:L1L2} and Theorem~\ref{thm:limit_shape_saddle}) 
and thus there are no cycles with lengths of order $n$.
Also, we show the following. For each $b\in\N$ fix, we have as $n\to\infty$
\begin{align}
  \left(C_1, C_2,  \ldots,C_b \right)
  \stackrel{d}{\to}
  \left(Y_1,\ldots,Y_b\right)
\end{align}
with $Y_1,\cdots,Y_b$ independent Poisson distributed random variables with $\E{Y_m} = \frac{\theta_m}{m}$, see Theorem~\ref{thm:limit_theorem_cycles}.
Further, we compute the total variation distance between both processes and show that this is tending to $0$ for $b=o(n^c)$ for some $c\in(0,1)$, see Theorem~\ref{thm:DTV}.
Moreover, we prove a central limit theorem for the total number of cycles, see Theorem~\ref{thm:limit_theorem_total_cycles}, and show that
a typical permutation consists in average of $\frac{\log^{k+1}(n)}{k+1}$ disjoint cycles.
Finally, we establish in Section~\ref{sec:limit_shape} a shape theorem and a functional central limit theorem for the Young diagrams associated to random  permutations. 

We prove these results using tools from complex analysis and combinatorics. 
For this, we have in particular to compute the asymptotic behaviour of
\begin{align}
 [z^n]\left[\exp\left( (-\log(1-z))^{k+1}  \right)\right]
 \label{eq:gen_intro}
\end{align}
as $n\to\infty$. As far as we are aware, this has not yet been studied in the literature and 
we compute \eqref{eq:gen_intro} with a modified version of the saddle point method, see Theorem~\ref{thm:aux_asypmtotic}.

\section{Generating functions and asymptotic theorems}
\label{sec:gen_and_asympt}
We recall in Section~\ref{sec:gfs} some basic facts about $\Sn$ and generating functions. 
This includes P\'olya's Enumeration Theorem, which is a useful tool to perform averages on the symmetric group.
In Section~\ref{sec:asymp_theorems}, we determine some analytic properties of the generating functions occurring in this paper and 
establish a result, see Theorem~\ref{thm:aux_asypmtotic}, which enables us to compute the asymptotic behaviour of the expression in \eqref{eq:gen_intro}.
\subsection{Generating functions}
\label{sec:gfs}
We use standard notation $\Z$ and $\N$ for the sets of integer and
natural numbers, respectively, and also denote
$\N_0:=\{m\in\Z:\,m\ge0\}=\{0\}\cup\N$.

For a sequence of complex numbers $(a_m)_{m\ge 0}$, its (ordinary)
generating function is defined as the formal power series
\begin{align}
\label{eq:G}
g(t)
:= 
\sum_{m=0}^\infty a_m t^m.
\end{align}
As usual \cite[\S{}I.1, p.\,19]{FlSe09}, we define the
\emph{extraction symbol} $[t^m]\, g(t):= a_m$, that is, as the
coefficient of $t^m$ in the power series expansion \eqref{eq:G}
of~$g(t)$.

The following simple lemma known as \emph{Pringsheim's Theorem}
(see, e.g., \cite[Theorem IV.6, p.\;240]{FlSe09}) is important in
asymptotic enumeration where generating functions with non-negative
coefficients are usually involved.
\begin{lemma}\label{lm:Pringsheim}
Assume that $a_m\ge0$ for all $m\ge0$, and let the series expansion
\eqref{eq:G} have a finite radius of convergence $R$. Then the point
$t = R$ is a singularity of the function $g(t)$.
\end{lemma}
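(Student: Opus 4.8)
The plan is to argue by contradiction, using the classical device of re-expanding $g$ about a real point just inside the disc of convergence and exploiting the non-negativity of the coefficients. So suppose, for contradiction, that $t=R$ is \emph{not} a singularity of $g$. Then $g$ continues holomorphically to some disc $D(R,\delta)=\{t:\,|t-R|<\delta\}$, and together with the convergent expansion \eqref{eq:G} on $D(0,R)$ this produces a function holomorphic on $D(0,R)\cup D(R,\delta)$. I would then fix a real point $r$ with $0<r<R$ close to $R$, concretely $r=R-\delta/4$, and verify the elementary inclusion $D(r,\delta/2)\subseteq D(0,R)\cup D(R,\delta)$: if $|t-r|<\delta/2$ and $|t|\ge R$, then $|t-R|\le|t-r|+|r-R|<3\delta/4<\delta$. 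Hence $g$ is holomorphic on $D(r,\delta/2)$, so its Taylor series about $r$ has radius of convergence at least $\delta/2>R-r$, and in particular it converges at some real point $t^{*}$ with $R<t^{*}<r+\delta/2$, for instance $t^{*}=R+\delta/8$.

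The second ingredient is that non-negativity of the $a_m$ forces the re-expansion about the \emph{positive} point $r$ to have non-negative coefficients: differentiating \eqref{eq:G} term by term at $t=r>0$ gives
\begin{align}
\frac{g^{(k)}(r)}{k!}=\sum_{m\ge k} a_m \binom{m}{k} r^{m-k}\ge 0 ,
\end{align}
so writing $g(t)=\sum_{k\ge0} b_k (t-r)^k$ we have $b_k\ge0$. Then I would substitute $t=t^{*}$ and rearrange; since every term is non-negative, Tonelli's theorem justifies interchanging the two sums:
\begin{align}
\sum_{k\ge0} b_k (t^{*}-r)^k
=\sum_{k\ge0}\sum_{m\ge k} a_m \binom{m}{k} r^{m-k}(t^{*}-r)^k
=\sum_{m\ge0} a_m \bigl(r+(t^{*}-r)\bigr)^m
=\sum_{m\ge0} a_m (t^{*})^m .
\end{align}
The left-hand side is finite by the previous paragraph, hence $\sum_m a_m (t^{*})^m<\infty$ with $t^{*}>R$, contradicting that $R$ is the radius of convergence of \eqref{eq:G}. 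Therefore $t=R$ must be a singularity of $g$.

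I do not expect a serious obstacle here: the only point needing a little care is the geometric bookkeeping of the discs, which must be arranged so that the expansion about $r$ provably reaches a real point strictly beyond $R$. The analytic-continuation step (compactness giving a uniform $\delta$) and the Tonelli rearrangement are then entirely routine, and the crucial use of $a_m\ge0$ together with $r>0$ enters only in the sign of $g^{(k)}(r)$ and in the absolute convergence needed for the interchange of sums.
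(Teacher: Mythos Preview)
Your argument is the standard proof of Pringsheim's theorem and is correct. The only remark is that the paper does not actually supply a proof of this lemma: it is stated as a known fact with a reference to \cite[Theorem~IV.6, p.\,240]{FlSe09}, so there is nothing to compare against beyond noting that your proof is precisely the classical one found there.
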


A special generating function constructed with the coefficients
$(\theta_m)$ plays a crucial role in
this paper, i.e.
\begin{equation}
\label{eq:def_g_theta}
  g_\Theta(t):=
  \sum_{m=1}^\infty \frac{\theta_m}{m}\, t^m.
\end{equation}
Indeed, we will see, the asymptotic behaviour of the measure $\mathbb{P}_\Theta$ is
determined by the analytic properties of the function $g_\Theta(z)$.

Recall that the cycle counts $C_m=C_m(\sigma)$ are defined as the
number of cycles of length $m\in\N$ in the cycle decomposition of
permutation $\sigma\in\Sn$ (see the Introduction). The next
well-known identity is a special case of the general \emph{P\'olya's
Enumeration Theorem} \cite[\S16, p.\,17]{Po37} and the proof can be found
for instance in \cite[p.~5]{Mac95}).
\begin{lemma}
\label{lem:cycle_index_theorem}
Let $(a_m)_{m\in\N}$ be a sequence of \textup{(}real or
complex\textup{)} numbers. Then there is the following
\textup{(}formal\textup{)} power series expansion
\begin{equation}\label{eq:symm_fkt}
\exp\left(\sum_{m=1}^{\infty}\frac{a_m t^m}{m}\right)
=\sum_{n=0}^\infty\frac{t^n}{n!}\sum_{\sigma\in\Sn}\prod_{m=1}^{n}
a_{m}^{C_m},
\end{equation}
where $C_m=C_m(\sigma)$ are the cycle counts. If either of the
series in \eqref{eq:symm_fkt} is absolutely convergent then so is
the other one.
\end{lemma}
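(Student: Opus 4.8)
The plan is to prove the formal power series identity \eqref{eq:symm_fkt} by manipulating the exponential on the left-hand side directly and then recognizing the combinatorial content of the resulting coefficients. First I would expand the exponential as
\begin{equation*}
\exp\left(\sum_{m=1}^{\infty}\frac{a_m t^m}{m}\right)
=\prod_{m=1}^{\infty}\exp\left(\frac{a_m t^m}{m}\right)
=\prod_{m=1}^{\infty}\sum_{c_m=0}^{\infty}\frac{1}{c_m!}\left(\frac{a_m}{m}\right)^{c_m} t^{m c_m}.
\end{equation*}
Multiplying out the product and collecting the coefficient of $t^n$, the exponents must satisfy $\sum_{m\ge 1} m\, c_m = n$, so only finitely many factors contribute and everything is well defined as a formal power series. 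This gives
\begin{equation*}
[t^n]\exp\left(\sum_{m=1}^{\infty}\frac{a_m t^m}{m}\right)
=\sum_{\substack{(c_m)_{m\ge1}\\ \sum_m m c_m = n}}\ \prod_{m=1}^{n}\frac{1}{c_m!\, m^{c_m}}\, a_m^{c_m}.
\end{equation*}

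The key step is then to identify $\prod_{m} \frac{1}{c_m!\, m^{c_m}}$ with $\frac{1}{n!}$ times the number of permutations in $\Sn$ having exactly $c_m$ cycles of length $m$ for each $m$. This is the classical counting fact: the number of $\sigma\in\Sn$ with cycle type $(c_m)_m$ equals $n!/\prod_{m} (m^{c_m} c_m!)$, which one proves by a standard orbit-counting argument — there are $n!$ ways to write down a sequence filling the cycle slots, and each permutation is obtained $\prod_m m^{c_m} c_m!$ times (cyclic rotation within each cycle, and permutation of the $c_m$ cycles of equal length $m$). Substituting this in, the sum over cycle types $(c_m)$ with $\sum m c_m = n$ becomes a sum over $\sigma\in\Sn$, with $c_m$ replaced by $C_m(\sigma)$, yielding
\begin{equation*}
[t^n]\exp\left(\sum_{m=1}^{\infty}\frac{a_m t^m}{m}\right)
=\frac{1}{n!}\sum_{\sigma\in\Sn}\prod_{m=1}^{n} a_m^{C_m(\sigma)},
\end{equation*}
which is exactly \eqref{eq:symm_fkt}.

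Finally, for the statement about absolute convergence, I would argue as follows: all the rearrangements above (expanding $\exp$, interchanging the product and sums, collecting powers of $t$) are legitimate as identities of formal power series regardless of convergence. If the series $\sum_m \frac{a_m t^m}{m}$ converges absolutely at a point $t$, then so does its exponential and all the intermediate products, so the right-hand series converges there too; conversely, if the right-hand side converges absolutely at $t$, one can reverse the manipulations — since all terms with fixed $n$ are grouped together and there are only finitely many cycle types per $n$ — to recover absolute convergence of the exponential series, hence of $\sum_m \frac{a_m t^m}{m}$ itself. The main obstacle, such as it is, is purely bookkeeping: making sure the interchange of the infinite product over $m$ with the summations is justified at the level of formal power series (each coefficient of $t^n$ is a finite sum) and then carefully invoking the cycle-type counting identity; there is no analytic difficulty, since the identity is fundamentally formal.
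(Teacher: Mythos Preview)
Your proof is correct and is the standard argument; the paper itself does not give a proof at all but simply refers the reader to Macdonald's book. There is nothing to compare beyond noting that you have supplied precisely the computation the paper omits.
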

We get immediately that 
\begin{corollary}
\label{cor:Hn_generatingN}
Let $h_n$ be the normalisation constant in Definition~\ref{def:Pb_measure}. 
 We then have as formal power series in $t$
 \begin{align}
  \sum_{n=0}^\infty h_n t^n
  =
  \exp\bigl(g_\Theta(t)\bigr).
  \label{eq:Hn_generatingN}
 \end{align}
\end{corollary}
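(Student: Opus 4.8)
The plan is to derive \eqref{eq:Hn_generatingN} directly from Lemma~\ref{lem:cycle_index_theorem} by specializing the free sequence $(a_m)$ to the cycle weights $(\theta_m)$. First I would set $a_m = \theta_m$ for every $m\geq 1$ in \eqref{eq:symm_fkt}. The left-hand side then becomes $\exp\bigl(\sum_{m=1}^\infty \frac{\theta_m t^m}{m}\bigr)$, which is exactly $\exp(g_\Theta(t))$ by the definition \eqref{eq:def_g_theta} of $g_\Theta$. The right-hand side becomes $\sum_{n=0}^\infty \frac{t^n}{n!}\sum_{\sigma\in\Sn}\prod_{m=1}^n \theta_m^{C_m}$.

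Next I would identify the inner sum with the normalization constant. By Definition~\ref{def:Pb_measure}, $\PT{\sigma} = \frac{1}{h_n n!}\prod_{m=1}^n \theta_m^{C_m}$, and since $\mathbb{P}_\Theta$ is a probability measure on $\Sn$ we have $\sum_{\sigma\in\Sn}\PT{\sigma} = 1$, hence
\begin{align}
  \sum_{\sigma\in\Sn}\prod_{m=1}^n \theta_m^{C_m} = h_n\, n!.
  \label{eq:inner_sum_is_hn}
\end{align}
(Equivalently, \eqref{eq:inner_sum_is_hn} can be taken as the \emph{definition} of $h_n$, which is the more natural reading since $\mathbb{P}_\Theta$ is only a probability measure once $h_n$ is chosen this way; the case $n=0$ gives the empty product, consistent with $h_0=1$.) Substituting \eqref{eq:inner_sum_is_hn} into the right-hand side of \eqref{eq:symm_fkt} collapses the factor $n!$ and yields $\sum_{n=0}^\infty h_n t^n$, which is precisely the claimed identity \eqref{eq:Hn_generatingN}.

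There is essentially no obstacle here: the statement is a formal power series identity, so no convergence issues arise, and Lemma~\ref{lem:cycle_index_theorem} does all the combinatorial work. The only point requiring a word of care is the bookkeeping of $h_n$ versus the coefficient sum—making sure the $n!$ in $\PT{\sigma}$ cancels the $1/n!$ in \eqref{eq:symm_fkt}—and confirming the $n=0$ term matches the convention $h_0=1$. Both are immediate.
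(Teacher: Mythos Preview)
Your proposal is correct and matches the paper's approach exactly: the paper simply states that the corollary follows immediately from Lemma~\ref{lem:cycle_index_theorem}, which is precisely the specialization $a_m=\theta_m$ together with the identification $\sum_{\sigma\in\Sn}\prod_m \theta_m^{C_m}=h_n\,n!$ that you spell out.
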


\subsection{Asymptotic theorems for generating function}
\label{sec:asymp_theorems}

In this section, we develop complex-analytic tools for computing the asymptotics of the coefficient $h_{n}$ in the power series
expansion of $  \exp\bigl(g_\Theta(t)\bigr)$ (see \eqref{eq:Hn_generatingN})
for the cycle weights $\theta_m$ in \eqref{eq:def_theta}. More generally, it is useful to consider expansions of the function
$\exp\bigl(vg_\Theta(t)\bigr)$, with some parameter $v>0$. 
We will see that the case $v=1$ is of primary importance, but we will need at some certain also
the behavior for $v\approx 1$ to deduce some limit theorems. 

Note that the function $g_\Theta(t)$ has radius of convergence $1$. A big part of our argumentation is based on the saddle-point method. 
For this we require the asymptotic behavior as $t\to 1$.
Note that the function 
 $$
 g_\Theta(t)
 =
 \sum_{m=1}^\infty \frac{\log^k m}{m} t^m
 $$
is a special case of the polylogarithm, see \cite[\S VI.8]{FlSe09} and \cite{Fl99} as well as \cite{dr01} for uses of the polylog in polynomial partitions.
We thus summarize here only the properties we need and give only a sketch of the proofs.
For a detailed proof, we refer to \cite{FlSe09}.
\begin{lemma}
\label{lem:asymptotic_g}
  Let $\theta_m$ in \eqref{eq:def_theta}. We then have 
 \begin{align}
  g_\Theta(t)
  =
  \sum_{m=1}^\infty \frac{\log^k m}{m} t^m
 \end{align}
and the function  $g_\Theta(t)$ can be analytically continued to $\C\setminus [1,\infty]$. 
Further, there exists a polynomial $P$ with 
\begin{align}
 P(r)= \frac{r^{k+1}}{k+1}+ \sum_{j=0}^{k} c_j r^j
\end{align}
with $c_j\in\R$ for $0\leq j\leq k$
such that  
 \begin{align}
  g_\Theta(e^{-w})
    =
  P\bigl(-\log(w)\bigr)
  +
  O(w)
  \label{eq:g_theta_to_1}
 \end{align}
for $w\to 0$ with $\arg(w)\leq  \pi- \epsilon$ and $\epsilon>0$ arbitrary.
\end{lemma}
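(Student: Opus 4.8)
The statement is essentially a packaging of the standard singularity analysis of polylogarithms. I would begin by recalling the general polylogarithm framework from \cite[\S VI.8]{FlSe09}. Observe that $g_\Theta(t) = \sum_{m\geq 1} \frac{\log^k m}{m}t^m$ can be obtained from the Lerch/polylogarithm $\mathrm{Li}_s(t) = \sum_{m\geq 1} m^{-s} t^m$ by differentiating with respect to the parameter $s$: indeed $\frac{\partial^k}{\partial s^k}\big|_{s=1} m^{-s} = (-1)^k \frac{\log^k m}{m}$, so $g_\Theta(t) = (-1)^k \partial_s^k \mathrm{Li}_s(t)\big|_{s=1}$. Since $\mathrm{Li}_s(t)$ admits an analytic continuation in $t$ to $\mathbb{C}\setminus[1,\infty)$ that is jointly analytic in $(s,t)$ away from the cut, differentiating in $s$ preserves this continuation; this gives the first assertion that $g_\Theta$ continues to $\mathbb{C}\setminus[1,\infty)$.

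For the expansion near $t=1$, the key input is the classical singular expansion of the polylogarithm: writing $t = e^{-w}$, one has for $w\to 0$ in the slit domain
\begin{align}
\mathrm{Li}_s(e^{-w}) = \Gamma(1-s)\,w^{s-1} + \sum_{j\geq 0} \frac{(-1)^j}{j!}\,\zeta(s-j)\,w^j,
\label{eq:lerch_exp}
\end{align}
valid uniformly for $\arg(w)\leq \pi-\epsilon$ (see \cite[Theorem VI.7]{FlSe09}). The term $\Gamma(1-s)w^{s-1}$ has a pole at $s=1$, and the term $\zeta(s-j)$ with $j=0$, i.e. $\zeta(s)$, also has a pole at $s=1$; these two poles cancel, which is what makes $\mathrm{Li}_1(e^{-w}) = -\log(1-e^{-w}) = -\log w + O(w)$ finite. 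To get $g_\Theta$ I would apply $(-1)^k\partial_s^k$ to \eqref{eq:lerch_exp} and then set $s=1$. Differentiating $\Gamma(1-s)w^{s-1}$ in $s$ produces terms of the form (Laurent coefficients of $\Gamma(1-s)$ near $s=1$) times $w^{s-1}(\log w)^i$; setting $s=1$ leaves $w^0 (\log w)^i = (\log w)^i$ for $i\leq k$. Likewise differentiating the combined singular contribution of $\Gamma(1-s)w^{s-1}$ and $\frac{(-1)^0}{0!}\zeta(s)w^0$ — the two terms whose poles cancel — and then setting $s=1$ yields a polynomial in $\log w$ of degree $k+1$, since each extra $s$-derivative can lower the order of the pole and produce one more power of $\log w$; the leading term, coming from the highest power of $\log w$, works out to $\frac{(-\log w)^{k+1}}{k+1}$. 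All the remaining terms $\zeta(s-j)w^j$ with $j\geq 1$ differentiate to things that are $O(w\log^k w) = O(w^{1-\delta})$, hence absorbed; more carefully one gets genuinely $O(w)$ up to logarithmic factors, and one should state the error as $O(w \log^k w)$ or simply note $w\log^k w = O(w^{1-\epsilon})$ — I would check whether the paper actually needs the sharper $O(w)$ and, if so, track the $j=1$ term $-\zeta(s-1)w$ and its derivatives, whose contribution after setting $s=1$ is a polynomial in $\log w$ times $w$, which is $O(w\log^k w)$; this is the one place where the claimed error term $O(w)$ is slightly optimistic and I would either weaken it to $O(w\log^k w)$ or absorb the logs into the constant by shrinking the exponent.

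The coefficients $c_j$ are then read off as explicit combinations of $\zeta$, $\zeta'$, $\dots$, $\zeta^{(k)}$ evaluated at points near $1$ together with the Laurent data of $\Gamma$ at $1$ (i.e. Euler's constant $\gamma$ and the Stieltjes constants); since the lemma only asserts existence with $c_j\in\mathbb{R}$, reality is immediate because $g_\Theta$ has real Taylor coefficients and the expansion \eqref{eq:lerch_exp} has all real data at real $s$. Finally I would remark that the whole computation is exactly the $k$-th $s$-derivative of the argument in \cite[\S VI.8, Ex.~VI.12 / Note VI.19]{FlSe09}, so a detailed write-up can be omitted and only the sketch given, consistent with the paper's stated intention.

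**Main obstacle.** The substantive point — rather than any hard analysis — is bookkeeping the cancellation of the double singularity: $\Gamma(1-s)w^{s-1}$ and $\zeta(s)w^0$ are each singular at $s=1$, and one must expand both to order $k$ in $(s-1)$ \emph{before} taking the $s=1$ limit, so that the pole parts cancel and the finite part assembles into the degree-$(k+1)$ polynomial $P(-\log w)$ with the correct leading coefficient $\frac{1}{k+1}$. Getting the leading coefficient right (and confirming it is $\frac{1}{k+1}$, matching the $\exp((-\log(1-z))^{k+1})$ heuristic in the introduction) is the one calculation I would actually carry out in detail; everything else is citation plus routine asymptotics.
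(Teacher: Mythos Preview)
Your approach is correct and leads to the same conclusion, but it is a genuinely different route from the paper's. The paper does \emph{not} differentiate the polylogarithm expansion in the parameter $s$. Instead it works directly with $g_\Theta(e^{-w})$ via the Mellin transform: one computes
\[
g_\Theta^{*}(s)=\int_0^\infty g_\Theta(e^{-w})\,w^{s-1}\,dw=(-1)^k\,\zeta^{(k)}(s+1)\,\Gamma(s),
\]
inverts, and shifts the line of integration past the poles at $s=0$ and $s=-1$. The residue at $s=0$ (of a function whose Laurent expansion begins $k!\,s^{-k-2}+\cdots$) is a polynomial of degree $k+1$ in $-\log w$ with leading coefficient $\tfrac{1}{k+1}$; the residue at $s=-1$ is a constant multiple of $w$; and the remaining integral is $O(w^{3/2})$. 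Analytic continuation is handled separately by Lindel\"of's integral, not by differentiating $\mathrm{Li}_s$. This route sidesteps the need to justify term-by-term $s$-differentiation of an asymptotic expansion and the delicate cancellation of the $\Gamma(1-s)$ and $\zeta(s)$ poles that you flag as your main obstacle.

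One correction to your proposal: your worry about the error term is unnecessary. In the expansion $\sum_{j\ge0}\frac{(-1)^j}{j!}\zeta(s-j)w^j$ the factor $w^j$ is independent of $s$, so $(-1)^k\partial_s^k$ applied to the $j\ge 1$ terms gives $(-1)^k\zeta^{(k)}(1-j)\,\frac{(-1)^j}{j!}\,w^j$ at $s=1$, a genuine constant times $w^j$ with \emph{no} logarithmic factor. The error is honestly $O(w)$, not $O(w\log^k w)$; you need not weaken the statement. (What does require care in your approach is that the implicit remainder in the polylog expansion is locally uniform in $s$ near $s=1$ so that Cauchy's formula lets you differentiate it; this is true but should be said.)
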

Equation \eqref{eq:g_theta_to_1} is related to \eqref{eq:gen_intro} by inserting $w=-\log(z)$ and then expanding.
 Indeed, we have as $z\to1$ with $|z|<1$ that
 \begin{align*}
  g_\Theta(z) 
  &=  
  P\bigl(-\log(-\log z)\bigr)
  +
  O(z-1)
  =
   P\Big(-\log\Big(-\log\big(1 +(z-1)\bigr)\Bigr)\Big)
  +
  O(z-1)\\
  &=
  P\Big(-\log\Big(-(z-1)+ O\big((z-1)^2\big) \Bigr)\Big)
  +
  O(z-1)\\
    &=
  P\Big(-\log\big(-(z-1)\big)+ O(z-1) \Big)
  +
  O(z-1)\\
   &=
  P\big(-\log(1-z)\big)
  +
  O\big((z-1)^{1/2}\big).
 \end{align*}
Inserting this computation into the generating function of $h_n$ in \eqref{eq:Hn_generatingN}, we indeed get \eqref{eq:gen_intro}.
However, we will work with the expression $g_\Theta(e^{-w})$ instead $g_\Theta(z)$ as this is more convenient in our computations.

\begin{proof}[Sketch of proof]
The function $g_\Theta(t)$ has clearly radius of convergence $1$ and is thus analytic for $|t|<1$.
For the analytic continuation, one use Lindel\"ofs integral representation of the polylogarithm, namely
\begin{align}
  g_\Theta(-t)
  =
  \frac{-1}{2\pi i} \int_{1/2-i\infty}^{1/2+i\infty}  \frac{\log^k (s)}{s} \frac{t^s \pi}{\sin (\pi s)} \,ds.
\end{align}
It is now easy to see that this integral is absolutely convergent for $t\in\C\setminus [0,\infty]$ and that it defines 
in $\C\setminus [0,\infty]$ an analytic function. Combining this with the fact that $g_\Theta(t)$ has radius of convergence $1$, proves the first part of the lemma.

To compute the asymptotic behaviour of $g_\Theta(e^{-w})$ as $w\to 0$, we use the Mellin transform, see for instance \cite[\S B.7]{FlSe09}.
Applying some elementary properties of the Mellin transform, we get immediately 
\begin{align}
 g^{*}_\Theta(s) := \int_0^\infty g_\Theta(e^{-w}) w^{s-1} \ dw = (-1)^k\zeta^{(k)}(s+1) \Gamma(s),
\end{align}
where $\zeta^{(k)}(s)$ is the $k$'th derivative of the Riemann zeta function and $\Gamma$ is the Gamma function. 
Using the inverse Mellin transform, we obtain 
 \begin{align}
  g_\Theta(e^{-w})
  =
  \int_{1/2-i\infty }^{1/2+i\infty } (-1)^k \zeta^{(k)}(s+1) \Gamma(s) w^{-s}\, ds.
 \end{align}
We now shift the contour of integration to $\Re(s) =-3/2$. By doing this, we pick up poles at $s=0$ and at $s=-1$ so that
  \begin{align}
  g_\Theta(e^{-w})
  =&
  \int_{-3/2-i\infty }^{-3/2+i\infty } (-1)^k \zeta^{(k)}(s+1) \Gamma(s) w^{-s}\, ds\label{eq:asymptotic_g1}\\
  &+  \res_{s=0}\left( (-1)^k \zeta^{(k)}(s+1) \Gamma(s) w^{-s}\right)
  +  \res_{s=-1}\left( (-1)^k \zeta^{(k)}(s+1) \Gamma(s) w^{-s}\right).
\nonumber
 \end{align}
We consider the Laurent expansion of $ (-1)^k \zeta^{(k)}(s+1) \Gamma(s)$ around $s=0$ and get
\begin{align}
 (-1)^k \zeta^{(k)}(s+1) \Gamma(s) = k!s^{-2-k} + \sum_{j=0}^{k} d_j s^{-j-1} +O(1),
\end{align}
for some  $d_j\in\R$, $0\leq j\leq k$. Note that this Laurent expansion is independent of $w$.  Using the Taylor expansion of $w^{-s}=e^{-s\log w}$ around $s=0$ then gives
\begin{align}
 \res_{s=0}\left( (-1)^k \zeta^{(k)}(s+1) \Gamma(s) w^{-s}\right)
 &=
 (-1)^{k+1}\frac{\log^{k+1}(w)}{k+1}
 +
 \sum_{j=0}^{k} d_j (-1)^j \frac{\log^{j}(w)}{j!}\\
&=
 \frac{1}{k+1}(-\log(w))^{k+1}
 +
 \sum_{j=0}^{k} c_j (-\log(w))^{j}\\
 &=
  P\bigl(-\log(w)\bigr)
\end{align}
with $c_j= d_j/j!$. Thus the residue at $s=0$ has the form we are looking for. Since $\Gamma(s)$ has a simple pole with residue $-1$ at $s=-1$, we get that 
\begin{align}
 \res_{s=-1}\left( (-1)^k \zeta^{(k)}(s+1) \Gamma(s) w^{-s}\right)
 =(-1)^{k+1} \zeta(0)w.
 \end{align}
The integral in \eqref{eq:asymptotic_g1} is well defined for all $w$ with $\arg(w)\leq  \pi/2- \epsilon$ since $|\Gamma(\sigma+it)| = O(t^2e^{-\frac{\pi}{2}t})$ for $|t|\to\infty$ and $\sigma>-2$.
A direct estimate then shows that this integral is of order $O(w^{3/2}) $. 
This shows that the above expansion in \eqref{eq:g_theta_to_1} is valid for  $\arg(w)\leq  \pi/2- \epsilon$.
To complete the proof, it remains to show that this expansion is also valid for $|\arg(w)|\leq  \pi- \epsilon$. 
We omit this proof as it follows the same lines as in the proof of \cite[Lemma~3]{Fl99}.
\end{proof}

\begin{remark}
One can easily relate the coefficients $c_j$ in Lemma~\ref{lem:asymptotic_g} to the Laurent expansion of $\Gamma(s)$ around $s=0$. 
However, for our purpose it is enough to know that the $c_j$ are real numbers. 
Further, one can obtain with the above argumentation a complete asymptotic expansion of $ g_\Theta(e^{-w})$ 
and this asymptotic expansion is valid for $|\arg(w)|\leq  \pi- \epsilon$. However, we do not need it here and thus will not prove it. 
Details can be found for instance in \cite[\S B.7]{FlSe09} and \cite{Fl99}.
\end{remark}

\begin{theorem}
\label{thm:aux_asypmtotic}

Let $g_\Theta(t)$ be as in \eqref{eq:def_g_theta}. Suppose $g_\Theta(t)$ has the radius of convergence $1$ and that $g_\Theta(t)$ is continuous in the punctured disc $\{|t|\leq 1$, $t\neq 1\}$. 
Suppose further there exists a polynomial $P$ with 
\begin{align}
 P(r)= \frac{r^{k+1}}{k+1}+ \sum_{j=0}^{k} c_j r^j
\end{align}
with $k\geq 1$   
such that  
 \begin{align}
  g_\Theta(e^{-w})
    =
  P\bigl(-\log(w)\bigr)
  +
  O(w)
  \label{eq:g_theta_to_2}
 \end{align}
for $w\to 0$ with $\arg(w)\leq  \pi/2$.
We then have for $v >0$

\begin{align}
[t^n]\left(\exp\bigl(vg_\Theta(t)\bigr)\right)
= 
\frac{\exp\left(vP(r)+ ne^{-r}\right) }{e^{r}\sqrt{2\pi vP''(r)+2\pi ne^{-r}}} \left(1+ O(\log^{-k/2}(n)) \right),
\label{eq:thm:aux_asypmtotic1}
\end{align}
where $r$ is a solution of the equation
\begin{align}
 vP'(r)= ne^{-r}.
 \label{eq:thm:aux_asypmtotic_saddle_eq}
\end{align}
Furthermore, the error term in \eqref{eq:thm:aux_asypmtotic1} is uniform in $v$ for $v \in [v_1 , v_2 ]$, where $v_1$, $v_2$ are arbitrary, but fixed constants with $1\leq v_1<v_2<\infty$.	
\end{theorem}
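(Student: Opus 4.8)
The plan is to evaluate $[t^n]\exp\bigl(vg_\Theta(t)\bigr)$ by Cauchy's formula along a carefully chosen circle and then apply the saddle-point method, feeding in the near-$1$ expansion \eqref{eq:g_theta_to_2} as the only structural information about $g_\Theta$ in the central range, and using the continuity hypothesis only for the crude bound needed far from $t=1$. Set $\Phi(w):=nw+vg_\Theta(e^{-w})$, so that the substitution $t=e^{-w}$ in $[t^n]\exp(vg_\Theta(t))=\frac1{2\pi i}\oint_{|t|=\rho}\exp(vg_\Theta(t))\,t^{-n-1}\,dt$ turns the coefficient into $\frac1{2\pi i}\int\exp(\Phi(w))\,dw$. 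The saddle equation $\Phi'(w_0)=0$ is $ne^{-w_0}=\frac{v}{w_0}P'(-\log w_0)$, which becomes \eqref{eq:thm:aux_asypmtotic_saddle_eq} after writing $w_0:=e^{-r}$. I would first record that \eqref{eq:thm:aux_asypmtotic_saddle_eq} has, for $n$ large, a unique solution with $r\sim\log n$, $ne^{-r}=vP'(r)\sim v(\log n)^k\to\infty$, $P''(r)=o(ne^{-r})$, and that $\partial r/\partial v=O(1)$ uniformly on $[v_1,v_2]$; this last point is the ultimate source of the uniformity in $v$, since all subsequent quantities then vary by bounded factors.

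Next, take $\rho:=e^{-w_0}<1$ (inside the disc of analyticity of $\exp(vg_\Theta)$) and parametrize $t=\rho e^{i\phi}$, so $w=w_0-i\phi$, $\Re(nw)=nw_0$, $|t|^{-n}=e^{nw_0}$, and $[t^n]\exp(vg_\Theta(t))=\frac1{2\pi}\int_{-\pi}^{\pi}\exp\bigl(\Phi(w_0-i\phi)\bigr)\,d\phi$. Since $g_\Theta$ is analytic near $w_0$, Cauchy estimates applied to \eqref{eq:g_theta_to_2} on a circle of radius $w_0/2$ about $w_0$ give $\Phi^{(j)}(w_0)=(-1)^j(j-1)!\,ne^{(j-1)r}(1+o(1))$ for $j\ge2$; in particular $\Phi''(w_0)=e^{2r}\bigl(ne^{-r}+vP''(r)\bigr)(1+o(1))$ — which produces the denominator in \eqref{eq:thm:aux_asypmtotic1} — and $\Phi'''(w_0)\sim-2ne^{2r}$, while $\Phi(w_0)=vP(r)+ne^{-r}+o(1)$.

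Now split the $\phi$-integral at $\phi_0:=A_n\,(\Phi''(w_0))^{-1/2}$ with a slowly growing $A_n$ (say $A_n=(\log n)^{k/6}$), chosen so that $\phi_0$ is well above the Gaussian width $(\Phi''(w_0))^{-1/2}$ yet still $\phi_0=o(w_0)$ and $\phi_0^3|\Phi'''(w_0)|=O(1)$. On $|\phi|\le\phi_0$ one Taylor-expands $\Phi(w_0-i\phi)=\Phi(w_0)-\tfrac12\Phi''(w_0)\phi^2+O(\phi^3|\Phi'''(w_0)|)$, performs the Gaussian integral, and bounds the cubic contribution crudely via $|e^{i\theta}-1|\le|\theta|$. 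Since $\int|\phi|^3e^{-\Phi''(w_0)\phi^2/2}\,d\phi\big/\!\int e^{-\Phi''(w_0)\phi^2/2}\,d\phi\asymp|\Phi'''(w_0)|(\Phi''(w_0))^{-2}$ and $|\Phi'''(w_0)|(\Phi''(w_0))^{-3/2}\asymp(e^r/n)^{1/2}\asymp v^{-1/2}(\log n)^{-k/2}$, this is precisely where the error term $O(\log^{-k/2}n)$ comes from; the quartic and higher terms contribute only $O(\log^{-k}n)$, and the part of the Gaussian beyond $\phi_0$ is super-polynomially small. This yields $\exp(\Phi(w_0))\,(2\pi\Phi''(w_0))^{-1/2}\bigl(1+O(\log^{-k/2}n)\bigr)$, which, after substituting the values of $\Phi(w_0)$ and $\Phi''(w_0)$, is exactly the right-hand side of \eqref{eq:thm:aux_asypmtotic1} (the factor $e^{-r}$ being $\sqrt{e^{2r}}$ pulled out of $\sqrt{\Phi''(w_0)}$).

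The bulk of the work is the tail $\phi_0<|\phi|\le\pi$, which I would treat in three sub-ranges. For $\phi_0<|\phi|\le w_0$, \eqref{eq:g_theta_to_2} still gives $\Re\bigl(\Phi(w_0-i\phi)-\Phi(w_0)\bigr)\le-c\,ne^{-r}\log\bigl(1+(\phi/w_0)^2\bigr)\le-c'\,\Phi''(w_0)\phi^2$, so this range contributes at most $\sqrt{w_0^2\Phi''(w_0)}\,e^{-c''A_n^2}$ relative to the main term, which is negligible. For $w_0<|\phi|\le\phi_1$ with $\phi_1$ a fixed small constant, write $-\log(w_0-i\phi)=X+iY$ with $X=\log(1/|w_0-i\phi|)\le r-\tfrac12\log 2$ and $|Y|\le\pi/2$; then \eqref{eq:g_theta_to_2} together with $\Re P(X+iY)=P(X)-\tfrac{Y^2}2P''(X)+\dots\le P(X)+O(X^{k-3})$ and $P(X)\le P(r-\tfrac12\log2)=P(r)-\tfrac{\log2}2r^k+O(r^{k-1})$ gives $\Re g_\Theta(\rho e^{i\phi})\le g_\Theta(\rho)-c\,r^k$, i.e. a gap $\gg\log n$ because $k\ge1$; hence negligible. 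Finally, for $\phi_1<|\phi|\le\pi$ one has $|1-\rho e^{i\phi}|\ge c(\phi_1)>0$, so continuity of $g_\Theta$ on the compact set $\{|t|\le1,\ |1-t|\ge c(\phi_1)\}$ yields $\Re g_\Theta(\rho e^{i\phi})\le M=O(1)$, which against $g_\Theta(\rho)\sim\tfrac1{k+1}(\log n)^{k+1}$ is again super-polynomially small. The main obstacle is exactly this tail analysis: the cutoffs must be arranged so that Gaussian decay near the saddle, the polylogarithmic asymptotics at intermediate scale, and the compactness bound far from $1$ each beat the polynomially large factor $\sqrt{\Phi''(w_0)}\asymp n(\log n)^{-k/2}$ coming from the normalization of the main term — and it is here, and only here, that $k\ge1$ enters, since it forces $g_\Theta(\rho)\asymp(\log n)^{k+1}$ to dominate $\log n$. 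Throughout, one verifies that the implied constants in the cutoffs and error bounds can be chosen uniformly for $v\in[v_1,v_2]$, using $v_1\ge1$ and the $O(1)$-variation of $r=r(n,v)$.
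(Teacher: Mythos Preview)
Your contour differs from the paper's: you stay on the circle $|t|=\rho=e^{-e^{-r}}$ (equivalently the vertical line $\Re w=w_0$), whereas the paper deforms to a half-circle $|w|=e^{-r}$ joined to the imaginary axis $w=ix$. On your line $\Re(nw)\equiv ne^{-r}$ is constant, so all tail decay must come from $v\,\Re P(-\log w)$; on the paper's imaginary axis $\Re(nw)=0$, which buys an extra factor $e^{-ne^{-r}}$ in the tail. Your central range and the far tail $|\phi|>\phi_1$ are fine and match the paper's in spirit.

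The middle sub-range $w_0<|\phi|\le\phi_1$ has a gap when $k=1$. You bound the defect in $\Re\Phi$ uniformly by $vc\,r^k$ with $c=\tfrac{\log 2}{2}$ and assert this is $\gg\log n$ ``because $k\ge1$''. For $k=1$ one only gets $r^k\sim\log n$, so the defect is $\asymp\log n$ with constant $\tfrac{v\log 2}{2}<1$ (for $v$ near~$1$), while the normalization you must beat is $\tfrac12\log\Phi''(w_0)\sim\log n$. With the crude length-times-maximum bound (length $\asymp\phi_1$) the ratio tail/main is of order $e^{r}\sqrt{ne^{-r}}\,e^{-\frac{v\log 2}{2}r}\asymp n^{1-\frac{v\log 2}{2}}(\log n)^{-1/2}\to\infty$. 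The repair is easy but must be said: the defect is not constant over $[w_0,\phi_1]$ but grows with $\phi$ (since $X=-\tfrac12\log(w_0^2+\phi^2)$ decreases), and near $\phi\approx w_0$ the interval has length only $O(e^{-r})$, not $O(1)$. Changing variable to $X$ turns the tail into $e^{ne^{-r}}\!\int^{\,r-\frac12\log 2}\!\exp\bigl(vP(X)-X\bigr)\,dX$, which by a Laplace/integration-by-parts estimate (exactly the paper's Lemma~\ref{lem:int_exp_P}) is $O\bigl(e^{ne^{-r}+vP(r)-\frac{\log 2}{2}ne^{-r}-r}/(ne^{-r})\bigr)$; the ratio to the main term is then $O\bigl(2^{-ne^{-r}/2}(ne^{-r})^{-1/2}\bigr)\to0$ for every $k\ge1$. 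The paper sidesteps this bookkeeping because its imaginary-axis tail already carries the dominating factor $e^{-ne^{-r}}$.
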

We have $P'(r)\sim r^k$ as $r\to\infty$ and thus \eqref{eq:thm:aux_asypmtotic_saddle_eq} has a solution for $n$ large.
Note that the solution $r$ is unique if $c_j\geq 0$ for all $j$.
This does not have to be the case if some of the $c_j$ are negative.  
However, a straight forward computation shows that all solution fulfills the same asymptotic expansion
\begin{align}
r = \log(n/v) - k\log\log(n/v) +O(1) \ \text{ as }n \to\infty.
\label{eq:saddle_solution_asympt}
\end{align}
From this, we immediately get
\begin{align}
 P(r) &= \frac{\log^{k+1}(n) }{k+1}\bigl(1+ O\bigl(\log^{-1}(n)\bigr)\bigr), \
  P''(r) = k\log^{k-1}(n) \bigl(1+ O\bigl(\log^{-1}(n)\bigr)\bigr)
   \label{eq:P''(r)}\\
 vP'(r) &=  ne^{-r} = v\log^{k}(n) \bigl(1+ O\bigl(\log^{-1}(n)\bigr)\bigr).
 \label{eq:P'(r)}
\end{align}
For proof of Theorem~\ref{thm:aux_asypmtotic} we will use the saddle point method. 
Unfortunately, the function $g_\Theta(t)$ is in this situation not (log-)Hayman admissible (see \cite{CiZe13} and \cite[\S VIII.5.]{FlSe09}). 
We thus cannot use the standardized saddle point method, which is described for instance in \cite[\S VIII.5.]{FlSe09}).
We therefore use a slightly modified version. Also, we need an auxiliary result.

\begin{lemma}
\label{lem:int_exp_P}
Let $C>0$ be given. Let further $Q(x)= a_dx^d+\ldots+a_0$ be a real polynomial with $a_d>0$ and $d\geq2$. We then have as $r\to\infty$
\begin{align}
 \int_{C}^r \exp\left( Q(y) \right)\, dy
 =
 \frac{1}{Q'(r)} \exp\left( Q(r) \right) + O\left( \frac{1}{(Q'(r))^2} \exp\left( Q(r) \right)   \right).
 \label{eq:int_exp_P}
\end{align}
\end{lemma}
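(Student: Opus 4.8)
The plan is to reduce the estimate to a single Laplace-type integral and integrate by parts once. First I would substitute $y = r - u/Q'(r)$ or, more cleanly, simply integrate by parts directly: write $\int_C^r e^{Q(y)}\,dy = \int_C^r \frac{1}{Q'(y)} \cdot Q'(y) e^{Q(y)}\,dy$ and integrate by parts with $u = 1/Q'(y)$, $dv = Q'(y)e^{Q(y)}\,dy$. This gives
\begin{align*}
 \int_C^r e^{Q(y)}\,dy
 =
 \left[\frac{e^{Q(y)}}{Q'(y)}\right]_C^r
 +
 \int_C^r \frac{Q''(y)}{(Q'(y))^2} e^{Q(y)}\,dy
 =
 \frac{e^{Q(r)}}{Q'(r)} - \frac{e^{Q(C)}}{Q'(C)}
 +
 \int_C^r \frac{Q''(y)}{(Q'(y))^2} e^{Q(y)}\,dy,
\end{align*}
valid provided $Q'$ does not vanish on $[C,r]$. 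Since $a_d>0$ and $d\geq 2$, for $C$ large enough (one may enlarge $C$ harmlessly, absorbing the finite piece $\int$ over the removed bounded interval into the $O$-term) the polynomial $Q'$ is positive and increasing on $[C,\infty)$, so this is legitimate; the boundary term at $C$ is an absolute constant, hence $O(e^{Q(r)}/(Q'(r))^2)$ since $Q(r)\to\infty$.

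Next I would bound the remaining integral $\int_C^r \frac{Q''(y)}{(Q'(y))^2} e^{Q(y)}\,dy$ by $O\bigl(e^{Q(r)}/(Q'(r))^2\bigr)$. The natural way is to note that $Q''(y)/(Q'(y))^2$ is, for large $y$, monotonically decreasing and of order $y^{-d}$, in particular bounded by a constant multiple of $1/(Q'(r))^2$ times something — but that is too crude near $y=r$. Instead I would iterate the same integration by parts once more, or more simply compare: since $e^{Q(y)}$ is increasing and $Q''(y)/(Q'(y))^2$ is eventually decreasing, split $[C,r]$ at $r-1$; on $[r-1,r]$ bound $Q''(y)/(Q'(y))^2 \leq Q''(r)/(Q'(r-1))^2 = O(1/(Q'(r))^2)$ (using $Q'(r-1)\sim Q'(r)$ and $Q''(r) = O(1)$ relative to... — actually $Q''(r)\to\infty$ too, so one wants $Q''(r)/(Q'(r))^2 = O(1/Q'(r)) \cdot O(1/r)$); so on $[r-1,r]$ the integral is $O\bigl(Q''(r)(Q'(r))^{-2} e^{Q(r)}\bigr)$, and since $Q''(r)/Q'(r) = O(1/r) = o(1)$ this is $o\bigl(e^{Q(r)}/(Q'(r))^2 \cdot Q'(r)\bigr)$; hmm — cleaner to observe $Q''(r)(Q'(r))^{-2}e^{Q(r)} = o\bigl((Q'(r))^{-1}e^{Q(r)}\bigr)$, matching the claimed remainder order once combined with the leading term. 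On $[C,r-1]$ one uses $e^{Q(y)} \leq e^{Q(r-1)} = e^{Q(r)}e^{Q(r-1)-Q(r)}$ and $Q(r)-Q(r-1) \geq c\, r^{d-1}\to\infty$, so that piece is exponentially smaller and certainly $O\bigl(e^{Q(r)}/(Q'(r))^2\bigr)$.

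The main obstacle will be handling the subleading integral cleanly: the integrand $\frac{Q''(y)}{(Q'(y))^2}e^{Q(y)}$ is not dominated near $y=r$ by a constant times $(Q'(r))^{-2}e^{Q(r)}$ in a one-line way, because $e^{Q(y)}$ blows up; the correct heuristic is that the integral is again concentrated at $y=r$ with effective "width" $1/Q'(r)$, giving $\frac{Q''(r)}{(Q'(r))^2}\cdot \frac{1}{Q'(r)}\cdot e^{Q(r)} = O\bigl(e^{Q(r)}/(Q'(r))^2\bigr)$ since $Q''(r)/Q'(r) = O(1)$ (indeed $\to 0$). To make this rigorous without a second, more painful integration by parts, I would substitute $y = r - t$ and bound $e^{Q(r-t)} \leq e^{Q(r)} e^{-Q'(r)t/2}$ for $0 \leq t \leq r-C$ (using convexity of $Q$ on $[C,\infty)$, valid for $C$ large, so $Q(r-t)\leq Q(r) - Q'(r)t + \tfrac12 Q''(C)t^2$ — actually convexity gives $Q(r-t) \leq Q(r) - Q'(r-t)t \leq Q(r) - Q'(C) t$, which already suffices for exponential decay, or more carefully $Q(r-t)\le Q(r)-Q'(r)t+O(Q''(r)t^2)$ and on the relevant range $t=O(1/Q'(r))$ the quadratic term is negligible). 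Then $\int_0^{r-C} \frac{Q''(r-t)}{(Q'(r-t))^2} e^{Q(r)}e^{-Q'(C)t}\,dt$, with $\frac{Q''(r-t)}{(Q'(r-t))^2}$ bounded by $O(1/(Q'(r))^2)$ uniformly for, say, $t \le r/2$ and crudely controlled for larger $t$ where the exponential decay dominates, yielding the bound $O\bigl(e^{Q(r)}/(Q'(r))^2\bigr)$. Assembling the boundary term $e^{Q(r)}/Q'(r)$ with these $O\bigl(e^{Q(r)}/(Q'(r))^2\bigr)$ contributions gives precisely \eqref{eq:int_exp_P}.
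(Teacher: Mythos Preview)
Your overall strategy—integrate by parts once on the whole interval and then control the remainder $\int_C^r \frac{Q''(y)}{(Q'(y))^2}e^{Q(y)}\,dy$—is sound and close in spirit to the paper's (the paper first splits off $[C,r-\delta]$ by a trivial bound, then integrates by parts twice on $[r-\delta,r]$; the second integration by parts is precisely what absorbs the $Q''$ factor you are wrestling with).

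There is, however, a concrete error in your final assembly of the remainder bound. You assert that $\frac{Q''(r-t)}{(Q'(r-t))^2} = O\bigl(1/(Q'(r))^2\bigr)$ uniformly for $t\le r/2$. This is false for $d\ge 3$: for $y\in[r/2,r]$ one has $\frac{Q''(y)}{(Q'(y))^2}\asymp r^{-d}$, while $1/(Q'(r))^2 \asymp r^{-(2d-2)}$, so the ratio grows like $r^{d-2}$. Paired with the \emph{weak} exponential bound $e^{Q(r-t)}\le e^{Q(r)}e^{-Q'(C)t}$, whose integral over $t$ is only $O(1)$, you obtain at best $O\bigl(Q''(r)\,e^{Q(r)}/(Q'(r))^2\bigr)$, which overshoots the claimed error by the unbounded factor $Q''(r)$ when $d\ge 3$.

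The repair is the very first bound you wrote and then abandoned. For $0\le t\le r/2$, convexity of $Q$ gives $Q(r)-Q(r-t)=\int_{r-t}^r Q'(u)\,du\ge t\,Q'(r/2)\ge c\,t\,Q'(r)$ for some $c>0$, hence $e^{Q(r-t)}\le e^{Q(r)}e^{-cQ'(r)t}$ on that range. Combined with the \emph{correct} pointwise bound $\frac{Q''(r-t)}{(Q'(r-t))^2}=O\bigl(Q''(r)/(Q'(r))^2\bigr)$ for $t\le r/2$, this yields
\[
\int_0^{r/2}\frac{Q''(r-t)}{(Q'(r-t))^2}\,e^{Q(r-t)}\,dt
\;\ll\;\frac{Q''(r)}{(Q'(r))^2}\,e^{Q(r)}\int_0^\infty e^{-cQ'(r)t}\,dt
\;=\;\frac{Q''(r)}{(Q'(r))^3}\,e^{Q(r)}
\;=\;O\!\left(\frac{e^{Q(r)}}{(Q'(r))^2}\right),
\]
since $Q''(r)/Q'(r)=O(1/r)$. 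The range $t\in[r/2,r-C]$ is then exponentially negligible, as you note. Alternatively—and this is what the paper does—one simply integrates by parts a second time on the short interval, which produces the factor $1/(Q'(r))^2$ directly and leaves a new remainder of still lower order.
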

\begin{proof}
 We first chose $\delta = \delta(r)$ with $ \delta=O(1)$ and $\delta\cdot Q'(r)/\log(r) \to \infty$. 
 We then split the integral in \eqref{eq:int_exp_P} into the integrals over $[C,\,r-\delta]$ and $[r-\delta, r]$. 

 We first consider the part over $[C,\,r-\delta]$. 
 For $r$ large enough, $Q(y)$ attains its maximum in the interval $[C,\,r-\delta]$ at the point $r-\delta$.
 Furthermore, 
 \begin{align*}
  Q(r-\delta) =  Q(r) - \delta Q'(r) + \frac{\delta^2}{2} Q''(\xi)
  \ \text{ for some }\xi\in [r-\delta, r]. 
 \end{align*}
 Since $\delta$ is bounded, we have for $r$ large enough
  \begin{align*}
  Q(r-\delta) \leq Q(r) - \frac{\delta}{2} Q'(r).
 \end{align*}
 Using this and the trivial estimate, we get
 \begin{align}
 \left|\int_{C}^{r-\delta} \exp\left( Q(y) \right)\, dy \right|
 &\leq 
  r\exp\Big( Q(r-\delta) \Big)
 \leq 
 \exp\left( Q(r) + \log(r) - \frac{\delta}{2} Q'(r) \right).
\end{align}
By assumption, we have $\delta\cdot Q'(r)/\log(r) \to \infty$ and thus $\log(r) - \frac{\delta}{2} Q'(r) \leq -K \log(r)$ for $r$ large enough,
where $K$ can be chosen arbitrary large. This implies that 
 \begin{align}
\int_{C}^{r-\delta} \exp\left( Q(y) \right)\, dy 
=
 O\left(\exp\Big( Q(r) \Big) r^{-K} \right).
\end{align}
This shows that the integral over $[C,\,r-\delta]$ is of lower order. 
For the integral over $[r-\delta,r]$, we use partial integration and a similar estimate as above to obtain
\begin{align*}
 \int_{r-\delta}^r \exp\left( Q(y) \right)\, dy
 =\,&
 \int_{r-\delta}^r \frac{1}{Q'(y)} \left(Q'(y) e^{Q(y)} \right)\, dy
 =
 \left. \frac{1}{Q'(y)} e^{Q(y)} \right|_{y=r-\delta}^r 
 -
 \int_{r-\delta}^r \frac{1}{Q'(y)} e^{Q(y)} \, dy\\
 =\,&
 \frac{1}{Q'(r)} e^{Q(y)} 
 -
 \int_{r-\delta}^r \frac{1}{Q'(y)} \exp\big( Q(y) \big)\, dy
 +  
 O\left( e^{Q(y)}  r^{-K} \right)\\
 =\,&
 \frac{1}{Q'(r)} e^{Q(y)} 
 -
\frac{1}{(Q'(r))^2}  e^{Q(y)} 
 +
 \int_{r-\delta}^r \frac{1}{(Q'(y))^2} e^{Q(y)}  \, dy
 +  
 O\left(e^{ Q(r) } r^{-K} \right)\\
 =\,&
 \frac{1}{Q'(r)} e^{Q(y)} 
 +  
 O\left( \frac{1}{(Q'(r))^2}  e^{Q(y)} \right).
\end{align*}
This completes the proof.
\end{proof}

\begin{proof}[Proof of Theorem~\ref{thm:aux_asypmtotic}]
We use Cauchy's integral formula and get
\begin{align}
 I_n:=[t^n]\left(\exp\bigl(vg_\Theta(t)\bigr)\right)
 = \frac{1}{2\pi i} \oint_\gamma \exp\bigl(vg_\Theta(t)\bigr) \frac{1}{t^{n+1}}\, dt,
\end{align}
where $\gamma$ is the circle $\gamma:=\{t=e^{-1/2}e^{i\varphi},\varphi\in[-\pi,\pi]\}$.
Applying the variable substitution $t=e^{-w}$, we get 
\begin{align}
 I_n
 = 
 \frac{1}{2\pi i} \int_{\gamma'} \exp\bigl(vg_\Theta(e^{-w})\bigr) e^{nw}\, dw
 \label{eq:aux_therorem1}
\end{align}
 with $\gamma':=\{t=1/2 +is,\,s\in[-\pi,\pi]\}$.
Note that the integrand in \eqref{eq:aux_therorem1} is $2\pi i$ periodic. 
We thus can shift the contour $\gamma'$ to the contour $\gamma''= \gamma''_1\cup \gamma''_2\cup\gamma''_3$ (see Figure~\ref{fig:gamma2''}) with 
 \begin{align*}
\gamma''_1&:=\{w=(-\pi+x)i,x\in[0,\pi-e^{-r}]\},\\
\gamma''_2&:=\{w=e^{-r}e^{i\varphi},\varphi\in[-\pi/2 ,\pi/2\},\\
\gamma''_3&:=\{w=ix ,x\in[e^{-r},\pi]\},
 \end{align*}
 where $r$ is the solution of the equation \eqref{eq:thm:aux_asypmtotic_saddle_eq}.
\begin{figure}[ht!]
 \centering
 \includegraphics[width=.35\textwidth]{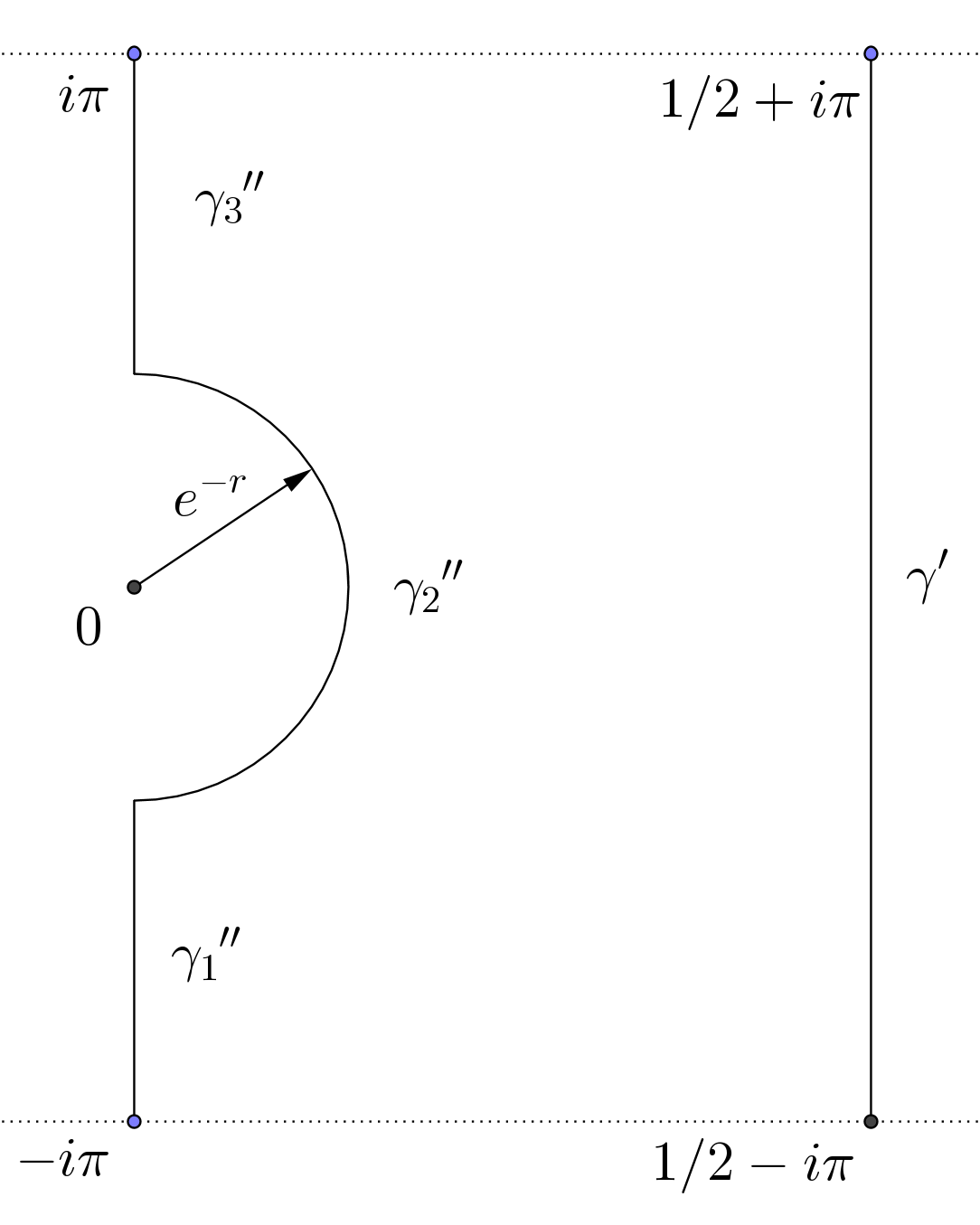}
 \caption{The contours $\gamma'$ and $\gamma''$.}
 \label{fig:gamma2''}
\end{figure}
We thus can write $I_n = I_{n,1}+ I_{n,2}+I_{n,3}$, where $I_{n,j}$ is the integral over $\gamma''_j$.

We begin by computing $I_{n,2}$ with the saddle point method. 
We thus take first a look at the behaviour of the integrand in $I_{n,2}$ for $\varphi$ around $0$.
We use \eqref{eq:g_theta_to_2} and get 
\begin{align}
 g_\Theta\left(e^{-e^{-r}e^{i\varphi}}\right)
 =&
 P(r-i\varphi) +O(e^{-r}e^{i\varphi}).
 \end{align}
 Expanding $P(r-i\varphi)$ around $\varphi =0$ gives
 \begin{align}
  P(r-i\varphi)
  =&
  P(r)-i\varphi P'(r) -\frac{1}{2}P''(r)\varphi^2 + 
  O\left(\varphi^3 r^{k-2} \right) \ \text{ as }\varphi \to 0.
\end{align}
%
We now split the integral $I_{n,2}$ into the regions $[-\delta,\delta]$ and $[-\pi/2, \pi/2]\setminus [-\delta,\delta]$ for some $\delta>0$ small determined below.
We first take a look at the integral over $[-\delta,\delta]$. 
With \eqref{eq:g_theta_to_2} we get
\begin{align*}
I_{n,2,\delta}
:=
&\frac{e^{-r}}{2\pi} \int_{-\delta}^{\delta} \exp\left(vg\left(e^{-e^{-r}e^{i\varphi}}\right) +ne^{-r}e^{i\varphi}+i\varphi      \right) \, d\varphi \\
=&
\frac{1}{2\pi e^{r}} \int_{-\delta}^{\delta} \exp\bigl(v P(r-i\varphi) +ne^{-r}e^{i\varphi}+i\varphi+O(ve^{-r}e^{i\varphi})      \bigr) \, d\varphi \\
=
 & \frac{1}{2\pi e^{r}} \int_{-\delta}^{\delta} \exp\left(v\left(P(r)-i\varphi P'(r) -\frac{1}{2}P''(r)\varphi^2 + 
 O\left(\varphi^3 r^{k-2}\right)\right)   \right) \\
&\qquad \times \exp\left(ne^{-r}e^{i\varphi}+i\varphi +O(e^{-r})     \right) \, d\varphi.
\end{align*}
Expanding $ne^{-r}e^{i\varphi}$ around $\varphi=0$ and using that we have $vP'(r)= ne^{-r}$ by the definition of $r$ in \eqref{eq:thm:aux_asypmtotic_saddle_eq}, we obtain
\begin{align*}
I_{n,2,\delta}
=&\,
\frac{\exp\bigl(vP(r)+ne^{-r}\bigr) }{2\pi e^r} \\
&\times\int_{-\delta}^{\delta} \exp\left({ -\frac{1}{2}(vP''(r)+ne^{-r})\varphi^2}\right)
\exp\left({ i\varphi +O\left(\varphi^3 (r^{k-2}+ne^{-r})\right) +O(e^{-r})}\right)\, d\varphi.  
\end{align*}
We know from \eqref{eq:saddle_solution_asympt}, \eqref{eq:P''(r)} and \eqref{eq:P'(r)} that 
\begin{align}
 r \sim \log(n), \   P''(r)\sim k\log^{k-1}(n) \ \text{ and } \ ne^{-r} \sim v\log^{k}(n)  \ \text{ as } n\to\infty.
\end{align}
Thus $ne^{-r}$ is dominating in the coefficients of $\varphi^2$ and $\varphi^3$ in the above expression for $I_{n,2,\delta}$.
We now define $\delta:=\delta(n,v) = (ne^{-r})^{-5/12}$. Thus $\delta\to 0$ and 
\begin{align}
 \delta^2 (vP''(r)+ne^{-r}) \to \infty \ \text{ and }\ \delta^3(r^{k-2}+ne^{-r}) \to 0.
\end{align}
We therefore get
\begin{align*}
I_{n,2,\delta}
&=
\frac{\exp\left(vP(r)+ne^{-r}\right) }{2\pi e^r} 
\int_{-\delta}^{\delta} e^{ -\frac{1}{2}(vP''(r)+ne^{-r})\varphi^2}
(1-i\varphi +O\left(\varphi^2+ \varphi^3 ne^{-r}+e^{-r})\right)\, d\varphi.  
\end{align*}
For notational convince, we write $b:=vP''(r)+ne^{-r}$.
The function $\varphi\, e^{ -\frac{ b}{2}\varphi^2}$ is odd and thus we can remove the $i\varphi$ in the last equation. 
Using the variable substitution $x^2 =  b\varphi^2$, we get
 \begin{align}
&\int_{-\delta}^{\delta} e^{ - \frac{b}{2}\varphi^2}\left(1+ O(\varphi^2)+O\left( ne^{-r}\varphi^3\right)+O\left(e^{-r}\right)\right) \, d\varphi \nonumber\\
 =&
\frac{1}{\sqrt{b}}\int_{-\delta\sqrt{b}}^{\delta\sqrt{b}} e^{ - \frac{1}{2}x^2}\left(1+O(b^{-1}x^2)+O\left( ne^{-r}b^{-3/2}x^3\right)+O\left(e^{-r}\right)\right) \, dx \nonumber\\
=&
\frac{1}{\sqrt{b}}\left(\int_{-\infty}^{\infty} e^{ - \frac{1}{2}x^2}\, dx +O(e^{-\delta\sqrt{b}})\right) \left(1+O(b^{-1})+O\left( ne^{-r}b^{-3/2}\right)+O\left(e^{-r}\right)\right)\nonumber \\
=&
\frac{\sqrt{2\pi}}{\sqrt{b}} \left(1+ O(\log^{-k/2}(n)) \right).
\label{eq:thm_aux_main_term_saddle}
 \end{align}
We thus obtain
\begin{align}
I_{n,2,\delta}
&=
  \frac{\exp\left(vP(r)+ne^{-r}\right) }{e^r\sqrt{2\pi (vP''(r)+ne^{-r})}} \left(1+ O(\log^{-k/2}(n)) \right).
\end{align}
We now show that remaining parts of $I_{n,2}$ and $I_{n,1}$, $I_{n,3}$ are all of lower order. We denote by $I^c_{n,2}$ the remaining part of the $I_{n,2}$, i.e. $I^c_{n,2}=I_{n,2}-I_{n,2,\delta}$.
For this, we use the inequalities 
\begin{align}
 \cos(\varphi)&\leq 1-\varphi^2/12 \ \text{ for }|\varphi|\leq \pi \ \text{ and }\\
 \Re\left(P(r-i\varphi) \right)
 &\leq 
 P(r)\left(1-\frac{k(\varphi/r)^2}{12}\right)  \ \text{  for $r$ large and }|\varphi|\leq \pi.
 \label{eq:bound_P_and_ner}
\end{align}
We thus get
\begin{align}
|I^c_{n,2}| 
=&\,
2\left|
\frac{1}{2\pi e^{r}} \int_{\delta}^{\pi} \exp\bigl(v P(r-i\varphi) +ne^{-r}e^{i\varphi}+i\varphi+O(ve^{-r}e^{i\varphi})      \bigr) \, d\varphi \right|\nonumber\\
\ll&
e^{-r} \int_{\delta}^{\pi}  \exp\left(v\Re\left(P(r-i\varphi)\right)+ne^{-r}\cos(\varphi)  \right) \, d\varphi \nonumber
\\
\ll& 
\exp\left(vP(r)+ ne^{-r}\right)e^{-r}\int_{\delta}^{\pi}  \exp\left(-\frac{kvP(r)r^{-2}+ne^{-r}}{12}\varphi^2 \right) \, d\varphi.
\label{eq:estimate_I2c}
\end{align}
We now have $kvP(r)r^{-2} = O(\log^{k-1}n) = o(ne^{-r})$ and thus 
\begin{align}
|I^c_{n,2}| 
&\ll
\exp\left(vP(r)+ ne^{-r}\right)e^{-r}\int_{\delta}^{\pi}  \exp\left(-\frac{ne^{-r}}{24}\varphi^2 \right) \, d\varphi\nonumber\\
&\ll
\frac{\exp\left(vP(r)+ ne^{-r}\right)}{e^{r}\sqrt{ne^{-r}}} 
\int_{\delta\sqrt{ne^{-r}}}^{\infty}  \exp\left(-\frac{x^2}{2} \right) \, dx\nonumber\\
&\ll
\frac{\exp\left(vP(r)+ ne^{-r}\right)}{e^{r}\delta\sqrt{ne^{-r}}}  e^{-\delta\sqrt{ne^{-r}}}.
\label{eq:estimate_I2c2}
\end{align}
Inserting the definition of $\delta$ and the asymptotic behaviour of $ne^{-r}$ shows that $I^c_{n,2}$ is of lower order. 
It remains to show that the integrals over $I_{n,1}$  and $I_{n,3}$ are also of lower order. The computations for both are almost the same and we thus only take a look at $I_{n,3}$.
We have 
\begin{align*}
|I_{n,3}| 
&\leq
 \frac{1}{2\pi}\left| \int_{e^{-r}}^\pi \exp\bigl(vg_\Theta(e^{-ix}) +nix\bigr) \, dx\right|
 \leq
 \frac{1}{2\pi} \int_{e^{-r}}^\pi \exp\bigl(\Re(vg_\Theta(e^{-ix})) \bigr) \, dx.
\end{align*}
We first consider the asymptotic behaviour of $g_\Theta(e^{-ix})$ as $x\to 0$. 
Equation \eqref{eq:g_theta_to_2} gives
\begin{align*}
g_\Theta(e^{-ix})
=&\,
P(-\log(x)-i\pi/2) +O(x).
\end{align*}
Using the Taylor expansion, we get for $x\to 0$
\begin{align*}
 \Re(g_\Theta(e^{-ix}))
 &=
 P(-\log(x)) - P''(-\log(x))\pi^2/8+ O\big(P^{(4)}(-\log(x))\big)  +O(x).
\end{align*}
Since $-\log(x)\geq 0$ for $x<1$, there exists a constant $0<c<1$ such that
\begin{align}
 \Re(g_\Theta(e^{-ix})
 &\leq 
 P(-\log(x)) - \frac{9}{8} P''(-\log(x))
 \ \text{ for all } \
 x\in]0,c].
 \label{eq:bound_gamma3}
\end{align}
We now spilt the integral into the integral over the regions 
$[e^{-r},c]$ and $[c,\pi]$. By assumption, $g_\Theta(t)$ is continuous in the punctured disc $\{|t|\leq 1$, $t\neq 1\}$.
We thus have clearly
\begin{align*}
 \frac{1}{2\pi} \int_{c}^\pi \exp\bigl(\Re(vg_\Theta(e^{-ix})) \bigr) \, dx = O(1).
\end{align*}
Furthermore, we get with the above estimates and the variable substitution $y=-\log(x)$
\begin{align*}
 \frac{1}{2\pi} \int_{e^{-r}}^c \exp\bigl(\Re(vg_\Theta(e^{-ix})) \bigr) \, dx
 &\leq 
 \frac{1}{2\pi} \int_{e^{-r}}^c \exp\left(vP(-\log(x)) - \frac{9}{8} vP''(-\log(x)) \right) \, dx\\
 &=
 \frac{1 }{2\pi} \int_{-\log(c)}^{r} \exp\Bigl(  vP(y) -  \frac{9}{8} vP''(y) \Bigr) e^{-y} \, dy.
\end{align*}
Thus we can apply Lemma~\ref{lem:int_exp_P} with $Q(y)=  vP(y) -  \frac{9}{8} vP''(y)- y$ and get 
\begin{align*}
|I_{n,3}| 
=
O\left(
\frac{\exp\Bigl(  vP(r) -  \frac{9}{8} vP''(r)- r  \Bigr)}{ P'(r)- \frac{9}{8} vP'''(r) -1}
\right).
\end{align*}
We now have to show that this is of lower order. Recall, the main term in the theorem is 
\begin{align}
 = \frac{\exp\left(vP(r)+ ne^{-r}-r\right) }{\sqrt{2\pi vP''(r)+2\pi ne^{-r}}}.
\end{align}
Using that
\begin{align}
r \sim \log(n/v), \ 
vP'(r) &=  ne^{-r} \sim v\log^{k}(n) \ \text{ and } \
   P''(r) \sim k\log^{k-1}(n) 
\end{align}
immediately completes the proof.
\end{proof}

We will see that we need in the Sections~\ref{sec:cycles}, \ref{sec:limit_shape} and~\ref{sec:dtv} also some slight generalizations of Theorem~\ref{thm:aux_asypmtotic}.
\begin{corollary}
\label{cor:aux_asypmtotic} 
 Let $g_\Theta(t)$ and $P(r)$ be as in Theorem~\ref{thm:aux_asypmtotic}. 
  Let further $f(t)$ be a holomorphic function with radius of convergence strictly bigger than $1$ and $f(1)\neq 0$. 
 We then have for $v \in [v_1 , v_2 ]$ with arbitrary constants $1\leq v_1<v_2<\infty$ 
 \begin{align*}
[t^n]\,f(t)\exp\bigl(vg_\Theta(t)\bigr)
= 
\frac{f(1)\exp\left(vP(r)+ ne^{-r}\right) }{\sqrt{2\pi vP''(r)+2\pi ne^{-r}}} \left(1+ O(\log^{-k/2}(n))+O\left(\Im(v)r^\frac{7k}{12}\right) \right),
\end{align*}
where $r$ is the solution of the equation
 \begin{align}
 vP'(r)= ne^{-r}.
 \label{eq:thm:aux_asypmtotic_saddle_eq2}
\end{align}
\end{corollary}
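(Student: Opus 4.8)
The plan for Corollary~\ref{cor:aux_asypmtotic} is to run the proof of Theorem~\ref{thm:aux_asypmtotic} again, now carrying the extra factor $f(t)$ along, and to check that it only inserts the constant $f(1)$ into the main term. Writing $I_n:=[t^n]\,f(t)\exp\bigl(vg_\Theta(t)\bigr)$, Cauchy's formula and the substitution $t=e^{-w}$ give
\begin{align*}
 I_n=\frac{1}{2\pi i}\int_{\gamma'}f(e^{-w})\exp\bigl(vg_\Theta(e^{-w})\bigr)e^{nw}\,dw.
\end{align*}
Since $f$ has radius of convergence strictly bigger than $1$, the function $w\mapsto f(e^{-w})$ is analytic and bounded in a fixed neighbourhood of the region swept when deforming $\gamma'$ to the contour $\gamma''=\gamma_1''\cup\gamma_2''\cup\gamma_3''$ of Figure~\ref{fig:gamma2''} (the only singularity in sight is the branch point of $g_\Theta$ at $w=0$, which $\gamma_2''$ avoids), so the deformation is still legitimate and I would again write $I_n=I_{n,1}+I_{n,2}+I_{n,3}$.

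On the central arc $\gamma_2''$ one has $|w|=e^{-r}\to0$, so analyticity of $f$ at $t=1$ yields $f(e^{-w})=f(1)+O(e^{-r})$ uniformly there, which equals $f(1)\bigl(1+O(e^{-r})\bigr)$ because $f(1)\neq0$ is a fixed constant. I would then factor $f(1)$ out of the integral over the window $[-\delta,\delta]$, $\delta=(ne^{-r})^{-5/12}$, after which the computation producing the main term of Theorem~\ref{thm:aux_asypmtotic} applies verbatim, and the surplus factor $1+O(e^{-r})$ is absorbed into the error term since $e^{-r}=vP'(r)/n=O(\log^k(n)/n)$ is far below $\log^{-k/2}(n)$. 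For the remainder of $\gamma_2''$ and for $\gamma_1'',\gamma_3''$ only upper bounds are needed, and since $f$ is continuous, hence bounded by some constant $M$, on $\{|t|\le1\}$, inserting $|f(e^{-w})|\le M$ into the estimates \eqref{eq:estimate_I2c}, \eqref{eq:estimate_I2c2} and into the $I_{n,3}$-bound obtained from Lemma~\ref{lem:int_exp_P} (and the analogous one for $I_{n,1}$) only changes the implied constants, so these pieces remain of strictly lower order. Assembling the three contributions gives the claimed asymptotics, with uniformity in $v$ inherited from Theorem~\ref{thm:aux_asypmtotic} together with the uniform bound $|f|\le M$; for real $v$ the term $O(\Im(v)r^{7k/12})$ is then vacuous.

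The term $O(\Im(v)r^{7k/12})$ is recorded for the later applications (Sections~\ref{sec:cycles}, \ref{sec:limit_shape}, \ref{sec:dtv}), where the corollary is used with $v$ in a small complex neighbourhood of $[v_1,v_2]$; there the plan is to keep $r$ as the real saddle attached to $\Re(v)$, so that on $\gamma_2''$ the Taylor expansion $vP(r-i\varphi)=vP(r)-i\varphi vP'(r)-\tfrac12 vP''(r)\varphi^2+O(\varphi^3r^{k-2})$ no longer cancels the linear term coming from $ne^{-r}e^{i\varphi}$. The residual linear contribution is $-i\varphi\bigl(vP'(r)-ne^{-r}\bigr)=\varphi\,\Im(v)\,P'(r)$, and bounding $e^{\varphi\Im(v)P'(r)}$ over $|\varphi|\le\delta=(ne^{-r})^{-5/12}$ by $1+O\bigl(\delta\,\Im(v)\,P'(r)\bigr)$, together with $\delta\,P'(r)\asymp(\log^k n)^{7/12}\asymp r^{7k/12}$, produces exactly the stated factor $1+O\bigl(\Im(v)\,r^{7k/12}\bigr)$. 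The hard part will be this complex-$v$ bookkeeping: one must verify that for $\Im(v)$ small the coefficient of $\varphi^2$ still has real part of order $\log^k(n)$, that the cubic remainder over $[-\delta,\delta]$ stays $o(1)$ so that the Gaussian evaluation \eqref{eq:thm_aux_main_term_saddle} survives, and that the extra bounded phase does not spoil the tail estimates \eqref{eq:estimate_I2c}, \eqref{eq:estimate_I2c2} and the $I_{n,3}$-bound; once these are in place the rest is routine.
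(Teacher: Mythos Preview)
Your proposal is correct and follows essentially the same approach as the paper's proof: Taylor-expand $f$ around $t=1$ on the central arc to extract the factor $f(1)$, and bound $f=O(1)$ on the remaining pieces. Your treatment is in fact more detailed than the paper's, which dispatches the corollary in three sentences; in particular, your derivation of the $O(\Im(v)r^{7k/12})$ term (via the residual linear phase when $r$ is tied to $\Re(v)$ and $\delta P'(r)\asymp r^{7k/12}$) is not spelled out in the paper at all.
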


\begin{proof}
 The proof is almost the same as for Theorem~\ref{thm:aux_asypmtotic}. We thus describe only the necessary adjustments. 
In the integral $I_{n,2,\delta}$, one has to use the Taylor expansion of $f(t)$ around one. It is straight forward to see that only the term $f(1)$ gives a relevant contribution.
In the remaining integrals, we use the estimate $f(t)=O(1)$. This completes the proof.
\end{proof}

\begin{corollary}
\label{cor:aux_asypmtotic2} 
 Let $g_\Theta(t)$ and $P(r)$ be as in Theorem~\ref{cor:aux_asypmtotic}. 
 Let further $f(t)$ be a function such tat such that
 \begin{itemize}
   \item $f(t)$ is holomorphic for $|t|<1$,
   \item $f(t)$ is continuous in the punctured disc $\{|t|\leq 1$, $t\neq 1\}$ and 
   \item there is a $j\geq 0$ and a $c_f\in\C$ such that
   $$f(e^{-w}) = c_f \frac{\big(-\log(w)\big)^k}{w^j} + O\left( \frac{\big(-\log(w)\big)^{j-1}}{w^j} \right) \ \text{ as }w\to 0,\, \Re(w)\geq 0.$$
 \end{itemize}
We then have 
 \begin{align*}
[t^n]\,f(t)\exp\bigl(g_\Theta(t)\bigr)
= 
c_f\, r^k e^{jr} \,\frac{\exp\left(P(r)+ ne^{-r}\right) }{\sqrt{2\pi P''(r)+2\pi ne^{-r}}} \left(1+ O(\log^{-1/2}n)\right),
\end{align*}
where $r$ is the solution of the equation
 \begin{align}
  P'(r)= ne^{-r}.
 \label{eq:thm:aux_asypmtotic_saddle_eq3}
\end{align}
\end{corollary}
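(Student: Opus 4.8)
The plan is to rerun the proof of Theorem~\ref{thm:aux_asypmtotic} with $v=1$, the only new ingredient being the prefactor $f(t)$. As there, I would start from
\[
[t^n]\,f(t)\exp\bigl(g_\Theta(t)\bigr)
=\frac{1}{2\pi i}\int_{\gamma'}f(e^{-w})\exp\bigl(g_\Theta(e^{-w})\bigr)e^{nw}\,dw,
\]
shift to the contour $\gamma''=\gamma_1''\cup\gamma_2''\cup\gamma_3''$ with $r$ the solution of \eqref{eq:thm:aux_asypmtotic_saddle_eq3}, and split $I_n=I_{n,1}+I_{n,2}+I_{n,3}$ accordingly.

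On the saddle arc $\gamma_2''$ one has $w=e^{-r}e^{i\varphi}$, hence $-\log w=r-i\varphi$ and $w^{-j}=e^{jr}e^{-ij\varphi}$; inserting this into the assumed expansion of $f(e^{-w})$ gives $f(e^{-w})=c_f(r-i\varphi)^ke^{jr}e^{-ij\varphi}\bigl(1+O(r^{-1})\bigr)$, uniformly in $|\varphi|\le\pi/2$. On the central piece $|\varphi|\le\delta:=(ne^{-r})^{-5/12}$ (the same $\delta$ as in the proof of Theorem~\ref{thm:aux_asypmtotic}), $\delta\to0$, so a Taylor expansion yields $(r-i\varphi)^ke^{-ij\varphi}=r^k(1+O(\varphi))$; the odd remainder integrates to zero against the even Gaussian weight $e^{-b\varphi^2/2}$ with $b=P''(r)+ne^{-r}$, and the even part contributes only $O(b^{-1})$. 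Thus $f$ may be pulled out of the central integral as the constant $c_fr^ke^{jr}$, and what is left is word-for-word the computation in the proof of Theorem~\ref{thm:aux_asypmtotic}; this produces the main term claimed in the statement, the relative errors $O(\log^{-k/2}n)$ from the cubic term of $P(r-i\varphi)$ and $O(\log^{-1}n)$ from the error term of $f$ combining to $O(\log^{-1/2}n)$. The remainder of $\gamma_2''$ is treated exactly like $I^c_{n,2}$ there, after the crude bound $|f(e^{-w})|\ll r^ke^{jr}$; the exponential margin in that estimate absorbs this extra factor.

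It remains to check that $I_{n,1}$ and $I_{n,3}$ are negligible; by symmetry I would do only $I_{n,3}$, where $w=ix$, $x\in[e^{-r},\pi]$. Since $\Re(ix)=0\ge0$, the hypothesis gives $|f(e^{-ix})|\ll(-\log x)^kx^{-j}$ for $0<x\le c$, and $f(e^{-ix})=O(1)$ for $x\in[c,\pi]$ by continuity of $f$ on the punctured disc, so the $[c,\pi]$-part of the integral is $O(1)$. On $[e^{-r},c]$ I would use the bound $\Re\bigl(g_\Theta(e^{-ix})\bigr)\le P(-\log x)-\tfrac98P''(-\log x)$ from the proof of Theorem~\ref{thm:aux_asypmtotic} (inequality \eqref{eq:bound_gamma3}) and substitute $y=-\log x$, obtaining
\[
|I_{n,3}|\ll 1+\int_{-\log c}^{r}y^k\exp\Bigl(P(y)-\tfrac98P''(y)+(j-1)y\Bigr)\,dy .
\]
Lemma~\ref{lem:int_exp_P} applied with $Q(y)=P(y)-\tfrac98P''(y)+(j-1)y$ (the polynomial prefactor $y^k$ merely evaluates to $r^k$ in the boundary term of the Laplace estimate) bounds this by $\ll r^k\bigl(P'(r)\bigr)^{-1}\exp\bigl(P(r)-\tfrac98P''(r)+(j-1)r\bigr)$, which is smaller than the main term by the factor $\exp\bigl(-ne^{-r}-\tfrac98P''(r)\bigr)\to0$ by \eqref{eq:P''(r)}--\eqref{eq:P'(r)}. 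The same works for $I_{n,1}$, which completes the proof.

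I expect the only real obstacle to be the behaviour of $f$ on the non-saddle arcs $\gamma_1'',\gamma_3''$: there $f$ is not bounded but grows, near $x\sim e^{-r}$, like $r^ke^{jr}$, which is exponentially large in $r$. The argument works because on those arcs $\Re(g_\Theta)$ supplies the strong decay encoded in the $-\tfrac98P''$ term together with the Gaussian loss $ne^{-r}(\cos\varphi-1)$, and both dominate $e^{jr}$; Lemma~\ref{lem:int_exp_P} does the bookkeeping. A minor point worth flagging is that Lemma~\ref{lem:int_exp_P} is stated for a pure exponential integrand, so one should observe that a polynomial prefactor simply rides through the Laplace estimate and is evaluated at the upper endpoint.
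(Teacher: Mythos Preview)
Your proposal is correct and follows the same route as the paper: rerun the contour argument of Theorem~\ref{thm:aux_asypmtotic} with the extra factor $f$, check that the saddle point is unchanged, and pull $f(e^{-e^{-r}})\sim c_f r^k e^{jr}$ out of the central Gaussian integral. The paper's own proof is considerably terser---it only writes out $I_{n,2,\delta}$ and dismisses ``the remaining parts of $I_2$'' in one sentence without explicitly treating $I_{n,1}$, $I_{n,3}$---so your version, which actually bounds $I_{n,3}$ via $|f(e^{-ix})|\ll(-\log x)^k x^{-j}$, the substitution $y=-\log x$, and Lemma~\ref{lem:int_exp_P} with $Q(y)=P(y)-\tfrac98 P''(y)+(j-1)y$, fills in details the paper omits. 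Your caveat about the polynomial prefactor $y^k$ in Lemma~\ref{lem:int_exp_P} is well taken; the cleanest fix is simply $y^k\le r^k$ on the range of integration, which is exactly what you do.
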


\begin{proof}
We use the same notation as in the proof of Theorem~\ref{thm:aux_asypmtotic} and describe only the necessary adjustments. 
We have 
\begin{align}
 I_n
 =
 \frac{1}{2\pi i} \int_\gamma f(t) \exp\bigl(vg_\Theta(t)\bigr) \frac{1}{t^{n+1}}\, dt
 = 
 \frac{1}{2\pi i} \int_{\gamma''}f(e^{-w}) \exp\bigl(vg_\Theta(e^{-w})\bigr) e^{nw}\, dw.
\end{align}
We use that $\gamma''_2=\{w=e^{-r}e^{i\varphi}, \varphi\in[-\pi,\pi]\}$ and obtain
\begin{align*}
I_{n,2,\delta}
=
\frac{1}{2\pi e^{r}} \int_{-\delta}^{\delta} f\left(e^{-e^{-r}e^{i\varphi}}\right) \exp\bigl(v P(r-i\varphi) +ne^{-r}e^{i\varphi}+i\varphi+O(ve^{-r}e^{i\varphi})      \bigr) \, d\varphi.
\end{align*}
As $f(t)$ has a singularity at $t=1$, one has to check if $f$ has a relevant influence to the saddle point equation.
However, it is not difficult to see that we can use the same $r$ as in Theorem~\ref{thm:aux_asypmtotic}.
Thus we immediately obtain that 
\begin{align*}
I_{n,2,\delta}
&=
 f\left(e^{-e^{-r}}\right) \,\frac{\exp\left(P(r)+ ne^{-r}\right) }{\sqrt{2\pi P''(r)+2\pi ne^{-r}}} \left(1+ O(\log^{-1/2}n)\right)\\
&=
c_f\, r^k e^{jr}\,\frac{\exp\left(P(r)+ ne^{-r}\right) }{\sqrt{2\pi P''(r)+2\pi ne^{-r}}} \left(1+ O(\log^{-1/2}n)\right).
\end{align*}
The remaining parts of $I_2$ are of lower order. This completes the proof.
\end{proof}

\section{Asymptotic statistics of cycles}
\label{sec:cycles}

We apply in this section Theorem~\ref{thm:aux_asypmtotic} to determine the asymptotic behaviour of various random variables on $\Sn$.

\subsection{Assumptions on the cycle weights $\theta_m$}
\label{sec:assumption_theta}

Theorem~\ref{thm:aux_asypmtotic} requires only the analytic properties of $g_\Theta(t)$, 
but does not require that $\theta_m = \log^k(m)$. 
In particular, it follows immediately with Lemma~\ref{lem:asymptotic_g} that the generating function $\widetilde{g}_\Theta(t)$ corresponding to cycle weights
\begin{align}
 \tilde{\theta}_m = \log^k(m) +\sum_{j=0}^{k-1} a_j \log^j(m) \ \text{ with } a_j\in\R \text{ for all }j
 \label{eq:alternative_cycle_weights}
\end{align}
has the same analytic properties, but with a slightly different polynomial $\tilde{P}$.
We thus can apply Theorem~\ref{thm:aux_asypmtotic} also for the cycle weights in \eqref{eq:alternative_cycle_weights}.
We thus assume from now only that we have $\theta_m\geq 0$ for all $m\geq 1$ and that the corresponding generating series $g_\Theta(t)$ in \eqref{eq:def_g_theta} is:
\begin{itemize}
 \item holomorphic for $|t|<1$,
 \item continuous in the punctured disc $\{|t|\leq 1$, $t\neq 1\}$ and 
 \item that there exists a polynomial $P$ with 
\begin{align}
 P(r)= \frac{r^{k+1}}{k+1}+ \sum_{j=0}^{k} c_j r^j
\end{align}
with $k\geq 1$
such that  
 \begin{align}
  g_\Theta(e^{-w})
    =
  P\bigl(-\log(w)\bigr)
  +
  O(w) 
  \ \text{ as }w\to 0,\, \Re(w)\geq 0.
  \label{eq:g_theta_to_3}
 \end{align}
\end{itemize}
Further, we define $r=r_{n,\Theta,v}$ to be a solution of the saddle point equation \eqref{eq:thm:aux_asypmtotic_saddle_eq}, i.e.
\begin{align}
 vP'(r)= ne^{-r}.
 \label{eq:thm:aux_asypmtotic_saddle_eq4}
\end{align}

\subsection{Normalisation Constant $h_n$}\label{sec:hn}

Recall, we have seen in Corollary~\ref{cor:Hn_generatingN} that
 \begin{align}
  \sum_{n=0}^\infty h_n t^n
  =
  \exp\bigl(g_\Theta(t)\bigr),
  \label{eq:Hn_generatingN2}
 \end{align}
where $h_n$ is the normalisation constant of the measure $\mathbb{P}_\Theta$ in Definition~\ref{def:Pb_measure}.
We thus immediately get with Theorem~\ref{thm:aux_asypmtotic} that
 \begin{align}
h_n=
\frac{\exp\left(P(r)+ ne^{-r}\right) }{e^{r}\sqrt{2\pi P''(r)+2\pi ne^{-r}}} \left(1+ O(\log^{-k/2}(n)) \right),
 \end{align}
where $P$ is as in Section~\ref{sec:assumption_theta} and $r$ is the solution of the equation $P'(r)= ne^{-r}$.

\subsection{Cycle counts}\label{sec:cyclecounts}

Our first result deals with the asymptotics of the cycle counts $C_m$'s (i.e., the numbers of cycles of length $m\in\N$, respectively, in a
random permutation $\sigma\in\Sn$).

\begin{theorem}
\label{thm:limit_theorem_cycles}
Suppose that $\Theta = (\theta_m)_{m\in\N}$ fulfils the assumptions in Section~\ref{sec:assumption_theta} and that $\Sn$ is endowed with $\mathbb{P}_\Theta$.
We then have for each $b\in\N$ as $n\to\infty$
\begin{align}
  \left(C_1, C_2,  \cdots,C_b \right)
  \stackrel{d}{\to}
  \left(Y_1,\cdots,Y_b\right)
\end{align}
with $Y_1,\cdots,Y_b$ independent Poisson distributed random variables with $\E{Y_m} = \frac{\theta_m}{m}$.
\end{theorem}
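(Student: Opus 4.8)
The plan is to prove convergence of the finite-dimensional cycle count vector $(C_1,\dots,C_b)$ to independent Poissons via the method of moments, following the classical approach for the Ewens-type measures (see e.g. \cite{ABT02}). The key combinatorial input is that for any fixed nonnegative integers $r_1,\dots,r_b$, the \emph{factorial moments} of the cycle counts have an exact expression on $\Sn$. Concretely, writing $(C_m)_{r} := C_m(C_m-1)\cdots(C_m-r+1)$, one has the well-known identity
\begin{align}
  \ET{\prod_{m=1}^b (C_m)_{r_m}}
  =
  \frac{1}{h_n}\,\prod_{m=1}^b \left(\frac{\theta_m}{m}\right)^{r_m}\, h_{n-M},
  \label{eq:fact_moments_plan}
\end{align}
where $M = \sum_{m=1}^b m\,r_m$ and $h_{n-M}$ is the normalisation constant at level $n-M$ (with $h_j=1$ for $j\le 0$ understood appropriately). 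This follows by the same generating-function manipulation as Corollary~\ref{cor:Hn_generatingN}: marking each $C_m$ with an extra variable and extracting coefficients, or equivalently by the standard ``remove $r_m$ cycles of length $m$'' bijection, which costs $\prod_m (\theta_m/m)^{r_m}$ in weight and leaves a permutation of $n-M$ remaining points.

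The analytic heart of the argument is then to show $h_{n-M}/h_n \to 1$ as $n\to\infty$ for each fixed $M$. For this I would use Theorem~\ref{thm:aux_asypmtotic} (with $v=1$), which gives the asymptotics of $h_n$ in terms of the saddle $r=r_n$ solving $P'(r)=ne^{-r}$. Using the asymptotic expansion \eqref{eq:saddle_solution_asympt}, namely $r = \log n - k\log\log n + O(1)$, together with \eqref{eq:P''(r)} and \eqref{eq:P'(r)}, one checks that replacing $n$ by $n-M$ perturbs $r_n$, $P(r_n)$, $P''(r_n)$ and $ne^{-r_n}$ only by lower-order amounts; more precisely $r_{n-M} - r_n = O(1/n)$, so $P(r_{n-M}) - P(r_n) = O(\log^k(n)/n) = o(1)$ and similarly the prefactors agree up to $1+o(1)$. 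Hence $h_{n-M}/h_n \to 1$. Plugging this into \eqref{eq:fact_moments_plan} yields
\begin{align*}
  \ET{\prod_{m=1}^b (C_m)_{r_m}}
  \longrightarrow
  \prod_{m=1}^b \left(\frac{\theta_m}{m}\right)^{r_m},
\end{align*}
which is exactly the product of the factorial moments of independent Poisson variables $Y_m$ with $\E{Y_m} = \theta_m/m$.

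Finally, since the Poisson distribution is determined by its moments (equivalently, its factorial moments), convergence of all joint factorial moments to those of $(Y_1,\dots,Y_b)$ implies convergence in distribution; one can invoke the standard moment-convergence criterion, checking the Carleman-type condition which holds trivially here because the limiting factorial moments $\prod_m (\theta_m/m)^{r_m}$ grow only like a product of geometric factors. I expect the main obstacle to be the quantitative control of the ratio $h_{n-M}/h_n$: one must track how the saddle point $r_n$ and each of the quantities $P(r_n)$, $P''(r_n)$, $ne^{-r_n}$ respond to the shift $n\mapsto n-M$, and verify that the cumulative effect on the exponent $P(r)+ne^{-r}-r$ is $o(1)$ rather than merely bounded. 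This is a direct but slightly delicate computation using \eqref{eq:saddle_solution_asympt}; everything else is routine bookkeeping with the method of moments.
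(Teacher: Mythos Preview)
Your argument is correct, but it differs from the paper's own proof. The paper computes the joint \emph{characteristic function} of $(C_1,\dots,C_b)$ directly: from Lemma~\ref{lem:cycle_index_theorem} one has
\[
  \sum_{n\ge0} h_n\,\ET{\exp\Bigl(i\sum_{m=1}^b s_m C_m\Bigr)}\,t^n
  = \exp\Bigl(\sum_{m=1}^b \tfrac{\theta_m}{m}(e^{is_m}-1)t^m\Bigr)\exp\bigl(g_\Theta(t)\bigr),
\]
and since the extra exponential factor is a polynomial in $t$ (hence entire, with value $\exp(\sum_m \tfrac{\theta_m}{m}(e^{is_m}-1))$ at $t=1$), Corollary~\ref{cor:aux_asypmtotic} applies and yields the limiting characteristic function in one stroke; L\'evy's continuity theorem finishes the proof.

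Your route via factorial moments is a genuinely different, equally valid, and in some respects more self-contained argument: the identity \eqref{eq:fact_moments_plan} is exact, and the only analytic input you need is $h_{n-M}/h_n\to1$, which follows from Theorem~\ref{thm:aux_asypmtotic} with $v=1$ alone---you do not need the extension in Corollary~\ref{cor:aux_asypmtotic} that accommodates an extra holomorphic factor $f(t)$. Your estimate $r_{n-M}-r_n=O(1/n)$ and the ensuing check that $P(r_{n-M})+\,(n-M)e^{-r_{n-M}}-r_{n-M}$ differs from $P(r_n)+ne^{-r_n}-r_n$ by $o(1)$ is the right computation and goes through exactly as you describe. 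The paper's approach, on the other hand, gives an explicit error bound $O(\log^{-k/2} n)$ on the characteristic function, which is a slightly sharper quantitative statement than what the factorial-moment route yields without extra work.
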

\begin{proof}
Using Lemma~\ref{lem:cycle_index_theorem} it is forthright to see that we have
\begin{align}
  \sum_{n=0}^\infty
   h_n \ET{\exp \left(i\sum_{m=1}^b s_m C_m \right)} t^n
    &=
\exp\left( \sum_{m=1}^b \frac{\theta_m}{m} (e^{is_m}-1) t^m \right) \exp\big(g_\Theta(t)\big)
\label{eq:generating_cycles}
\end{align}
as formal power series in $t$. The details of this computation can be found for instance in \cite[Theorem~3.1]{NiZe11}.
Corollary~\ref{cor:aux_asypmtotic} with $v=1$ then immediately implies that
\begin{align}
 \ET{\exp \left(i\sum_{m=1}^b s_m C_m \right)}
 =
 \exp\left( \sum_{m=1}^b \frac{\theta_m}{m} (e^{is_m}-1) \right)\left(1+ O(\log^{-k/2}(n)\right).
\end{align}
The theorem now follows immediately from L\'evy's continuity theorem.
\end{proof}
\subsection{Total number of cycles}
\label{sec:tot_num_cycles}

We denote by $K_{0n}$ the total number of cycles in the cycles decomposition of $\sigma\in\Sn$, i.e.
\begin{align}
 K_{0n}:= \sum_{m=1}^n C_m.
\end{align}

\begin{theorem}
\label{thm:limit_theorem_total_cycles}
Suppose that $\Theta = (\theta_m)_{m\in\N}$ fulfils the assumptions in Section~\ref{sec:assumption_theta} and that $\Sn$ is endowed with $\mathbb{P}_\Theta$.
We then have
\begin{align}
  \frac{K_{0n} - \E{K_{0n}}}{\sqrt{\frac{\log^{k+1}(n)}{k+1}}}
  \stackrel{d}{\to}
  \mathcal{N}(0,1)
\end{align}
where $\mathcal{N}(0,1)$ is the standard normal distribution and $\E{K_{0n}} \sim \frac{\log^{k+1}(n)}{k+1}$.
\end{theorem}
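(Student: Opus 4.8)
The plan is to prove the central limit theorem for $K_{0n}$ by the standard route of establishing convergence of the suitably normalised cumulant generating function, or equivalently the moment generating function $\ET{e^{sK_{0n}}}$ for real $s$ in a neighbourhood of $0$, and then invoking a Curtiss-type continuity theorem (the version of Lévy's theorem for moment generating functions). First I would use Lemma~\ref{lem:cycle_index_theorem} exactly as in the proof of Theorem~\ref{thm:limit_theorem_cycles}: setting all the variables equal, one obtains
\begin{align*}
\sum_{n=0}^\infty h_n \ET{e^{sK_{0n}}}\, t^n
=
\exp\!\left( e^{s}\, g_\Theta(t)\right)
=
\exp\!\left( v\, g_\Theta(t)\right), \qquad v:=e^{s}.
\end{align*}
Thus $h_n\ET{e^{sK_{0n}}} = [t^n]\exp(v g_\Theta(t))$, and dividing by $h_n=[t^n]\exp(g_\Theta(t))$ we may apply Theorem~\ref{thm:aux_asypmtotic} to numerator and denominator. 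For $s$ in a fixed small neighbourhood of $0$ we have $v\in[v_1,v_2]$ with $1\le v_1<v_2<\infty$ after possibly shrinking (or for $s\ge 0$; for $s<0$ one needs $v<1$, which is why I would instead center first — see below), so the error terms are uniform and cancel in the ratio.

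The heart of the computation is then asymptotics of the exponent. Writing $r=r_{n,v}$ for the solution of $vP'(r)=ne^{-r}$ and $r_0=r_{n,1}$ for the solution of $P'(r_0)=ne^{-r_0}$, Theorem~\ref{thm:aux_asypmtotic} gives
\begin{align*}
\ET{e^{sK_{0n}}}
=
\exp\!\Big( vP(r) + ne^{-r} - P(r_0) - ne^{-r_0}\Big)\,
\frac{e^{r_0}\sqrt{2\pi P''(r_0)+2\pi ne^{-r_0}}}{e^{r}\sqrt{2\pi vP''(r)+2\pi ne^{-r}}}
\big(1+o(1)\big).
\end{align*}
From \eqref{eq:saddle_solution_asympt} we have $r=\log(n/v)-k\log\log(n/v)+O(1)$, so $r-r_0 = -\log v + o(1)=-s+o(1)$, the prefactor ratio is $e^{r_0-r}(1+o(1))=e^{s}(1+o(1))$ times lower-order square-root factors that tend to $1$ since $P''(r),\ ne^{-r}\sim k\log^{k-1}n$ and $v\log^k n$ are insensitive to the $O(1)$ shift. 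So the prefactor contributes only a bounded factor and does not affect the Gaussian limit. The main work is to Taylor-expand $\Phi(v):=vP(r_{n,v})+ne^{-r_{n,v}}$ in $s=\log v$ around $s=0$: by the envelope property, $\frac{d}{dv}\big(vP(r)+ne^{-r}\big) = P(r)$ when $r$ solves the saddle equation (the $r$-derivative vanishes), hence $\frac{d}{ds}\Phi = v P(r)$, and $\frac{d^2}{ds^2}\Phi = vP(r) + v^2 P'(r) \frac{dr}{ds}$ where differentiating $vP'(r)=ne^{-r}$ gives $\frac{dr}{ds} = -\frac{vP'(r)}{vP''(r)+ne^{-r}} = O(1/\log^{k-1}n)$. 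Using $P(r)\sim \frac{\log^{k+1}n}{k+1}$ and $P'(r)\sim\log^k n$, one finds the first derivative at $s=0$ is $\E{K_{0n}}(1+o(1)) \sim \frac{\log^{k+1}n}{k+1}$ and the second derivative is also $\sim\frac{\log^{k+1}n}{k+1}$, which is exactly $\var(K_{0n})$ to leading order; all higher derivatives are shown to be $O(\log^{k+1}n)$ as well, uniformly. Therefore for the scaled variable $W_n := (K_{0n}-\E{K_{0n}})/\sqrt{\tfrac{\log^{k+1}n}{k+1}}$ we get $\log\ET{e^{\tau W_n}} \to \tau^2/2$ for each fixed real $\tau$, which by Curtiss's theorem yields $W_n\stackrel{d}{\to}\mathcal N(0,1)$.

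The main obstacle I anticipate is twofold. First, making the Taylor expansion of $\Phi(v)$ rigorous with a uniform remainder: one must show that on the relevant shrinking scale $s=\tau/\sqrt{\log^{k+1}n}$ the third- and higher-order terms are $o(\log^{k+1}n\cdot s^3)=o(1)$, which requires careful control of $\frac{d^k r}{ds^k}$ and of $P^{(j)}(r)$ via \eqref{eq:P''(r)}–\eqref{eq:P'(r)} and the implicit-function description of $r_{n,v}$ — the non-uniqueness of $r$ when some $c_j<0$ has to be handled by noting all solutions satisfy \eqref{eq:saddle_solution_asympt}. Second, the range-of-$v$ issue: Theorem~\ref{thm:aux_asypmtotic} is stated only for $v\ge 1$, so for the MGF one can only directly access $s\ge 0$; to get both tails I would instead work with a shrinking window $v=1+\tau/\sqrt{\log^{k+1}n}$ (so automatically $v\to 1$ and for large $n$ one has $v\in[v_1,v_2]$ for any $1\le v_1<v_2$ once $\tau>0$; for $\tau<0$ one either re-derives the uniform statement of Theorem~\ref{thm:aux_asypmtotic} for $v$ in a two-sided neighbourhood of $1$ — which the proof plainly allows, the hypothesis $v\ge1$ being used only for definiteness — or, more cleanly, uses the characteristic function $\ET{e^{itK_{0n}}}$ with $v=e^{it}$ and Corollary~\ref{cor:aux_asypmtotic}, whose error term already tolerates $\Im(v)\ne 0$, and invokes Lévy's continuity theorem instead). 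I would adopt the characteristic-function version to avoid any one-sidedness, carry the expansion of $\Phi(e^{it})$ on the scale $t=\tau/\sqrt{\log^{k+1}n}$, and conclude via Lévy. Everything else — the generating-function identity, the cancellation of prefactors, the identification of mean and variance — is routine given Theorem~\ref{thm:aux_asypmtotic} and \eqref{eq:saddle_solution_asympt}–\eqref{eq:P'(r)}.
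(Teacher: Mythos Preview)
Your approach coincides with the paper's: both derive $h_n\,\ET{e^{sK_{0n}}}=[t^n]\exp\bigl(e^{s} g_\Theta(t)\bigr)$ from Lemma~\ref{lem:cycle_index_theorem}, apply Theorem~\ref{thm:aux_asypmtotic} with $v=e^{s}$, and expand the exponent on the scale $s=\tau\big/\sqrt{\log^{k+1}(n)/(k+1)}$. The paper dispatches your one-sidedness worry in one stroke by invoking Theorem~2.2 of \cite{Ca07}, which asserts that convergence of the moment generating function on the half-line $s\ge 0$ alone already yields the CLT; so neither a two-sided extension of Theorem~\ref{thm:aux_asypmtotic} nor the characteristic-function detour via Corollary~\ref{cor:aux_asypmtotic} is needed.

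One small slip in your envelope computation: differentiating $vP'(r)=ne^{-r}$ in $s$ gives $\tfrac{dr}{ds}=-\tfrac{ne^{-r}}{vP''(r)+ne^{-r}}\to -1$, not $O(\log^{1-k}n)$. Fortunately $vP'(r)\cdot\tfrac{dr}{ds}=O(\log^{k}n)$ is still lower order than $vP(r)\sim\log^{k+1}(n)/(k+1)$, so your identification of the second derivative, and hence of the variance, is unaffected.
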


\begin{proof}
We have for each  $s\in\C$ as formal power series in $t$
\begin{align}
\ET{\exp\bigl( s K_{0n}  \bigr)}
=
\ET{\exp\left( s \sum_{m=1}^n C_m  \right)} 
=
\frac{1}{h_n}[t^n]
\exp\bigl( e^s g_\Theta (t) \bigr).
\label{eq:gen_K0n}
\end{align}
This equation follows immediately from Lemma~\ref{lem:cycle_index_theorem}.
The exact details of this computation can be found for instance in \cite[Lemma~4.1]{NiZe11}.
Although  the  expressions  in  \eqref{eq:gen_K0n}  holds  for general $s\in \C$, 
we will calculate the asymptotic behaviour of the moment  generating function of $K_{0n}$ only on the positive half-line $s\geq 0$. 
Theorem 2.2 in \cite{Ca07} shows that this is enough to prove statement of the theorem.
Using Theorem~\ref{thm:aux_asypmtotic}, we have 
\begin{align}
[t^n]\exp\bigl( e^s g_\Theta (t) \bigr)
&= 
\frac{\exp\left(e^sP(r)+ ne^{-r}\right) }{e^{r}\sqrt{2\pi e^s P''(r)+2\pi ne^{-r}}} \left(1+ O(\log^{-k/2}(n)) \right)\nonumber\\
&= 
\frac{\exp\left(e^sP(r)+  e^sP'(r)\right) }{e^{r}\sqrt{2\pi e^s P''(r)+2\pi  e^sP'(r)}} \left(1+ O(\log^{-k/2}(n)) \right)
 \label{eq:thm:aux_asypmtoticK0n}
\end{align}
where $r$ is a solution of the equation
\begin{align}
 e^sP'(r)= ne^{-r}.
\end{align}
Equation \eqref{eq:saddle_solution_asympt} now implies that for $s$ bounded we have 
\begin{align}
 r \sim \log(n) -s, \ P(r) \sim  \frac{\log^{k+1}(n)}{k+1},\ P'(r) \sim  \log^{k}(n)    \  \text{ and } \ \ P''(r) =k \log^{k-1}(n).
\end{align}
As \eqref{eq:thm:aux_asypmtoticK0n} hold uniformly of $s$ bounded, we can replace $s$ by $\tilde{s} = \frac{s}{\sqrt{\frac{\log^{k+1}(n)}{k+1}}}$.
A direct computation then shows that 
\begin{align}
\frac{1}{h_n}[t^n]\exp\bigl( e^{\tilde{s}} g_\Theta (t) \bigr)
&= 
e^{s^2/2} \left(1+ O(\log^{-k/2}(n)) \right).
\end{align}
This completes the proof of the theorem.
\end{proof}
\begin{remark}
 We can determine with Theorem~\ref{thm:aux_asypmtotic} the asymptotic behaviour of $\ET{\exp\bigl( s K_{0n}  \bigr)}$ for $s\geq 0$.
 If we could extend Theorem~\ref{thm:aux_asypmtotic} and compute $\ET{\exp\bigl( s K_{0n}  \bigr)}$ also for $s\in\C$ then this would  
 imply immediately much stronger results, see for instance \cite{NiZe11}. 
\end{remark}

\subsection{Lexicographic ordering of cycles}
\label{sec:unordered_cycles_sub}

Often cycles  in the cycle decomposition of a permutation are ordered by length.
Another convenient way is to list the cycles (and their lengths) via the
lexicographic ordering, that is, by tagging them with a suitable
increasing subsequence of elements starting from $1$.
\begin{definition}
\label{def:L-lex}
For permutation $\sigma\in\Sn$ decomposed as a product of cycles,
let $L_1 = L_1(\sigma)$ be the length of the cycle containing
element~$1$, $L_2=L_2(\sigma)$ the length of the cycle containing
the smallest element not in the previous cycle, etc. The sequence
$(L_j)$ is said to be \emph{lexicographically ordered}.
\end{definition}
Our next aim is to determine the asymptotic behaviour of the \emph{lexicographically ordered} cycles lengths.
For this we have to extend the assumptions in Section~\ref{sec:assumption_theta} a little bit.
We assume in addition that we have for all $j\geq 1$
 \begin{align}
  g_\Theta^{(j)}(e^{-w})
    =
  (j-1)!\left(\frac{e^{w}}{w}\right)^j \big(-\log(w)\big)^k
  +
  O\left( \left(\frac{e^{w}}{w}\right)^j \big(-\log(w)\big)^{k-1} \right),
  \label{eq:g_theta_to_lexi}
 \end{align}
where $g_\Theta^{(j)}(t)$ is the $j$'th derivative of $g_\Theta$. 
If the function $g_\Theta(t)$ fulfils the assumptions in Section~\ref{sec:assumption_theta} and can be analytically extended beyond the punctured disc $\{|t|\leq 1$, $t\neq 1\}$ then 
the assumption \eqref{eq:g_theta_to_lexi} is automatically fulfilled. 
For concreteness, let us define the following region,
\begin{definition}
\label{def_delta_0}
Let $1 < R$ and $0 < \phi <\frac{\pi}{2}$ be given. We then define
\begin{align}
\Delta_0 = \Delta_0(R,\phi) = \{ t\in \C ; |t|<R, z \neq 1 ,|\arg(z-1)|>\phi\}.
\label{eq_def_delta_0}
\end{align}
\end{definition}
An illustration of $\Delta_0(R,\phi)$ can be found in Figure~\ref{fig:delta_0}
\begin{figure}[ht!]
 \centering
 \includegraphics[width=.3\textwidth]{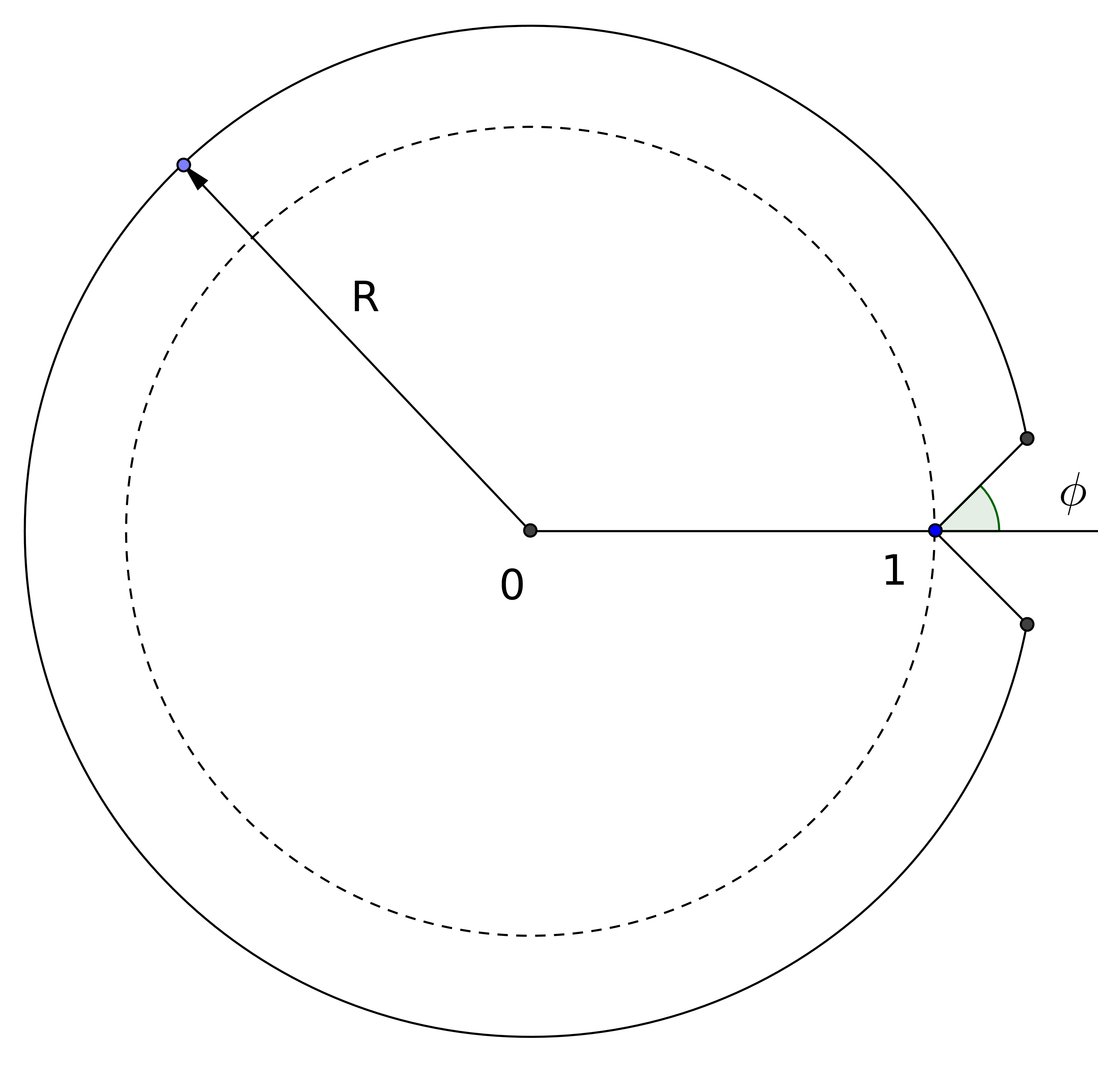}
 \caption{Illustration of $\Delta_0$}
 \label{fig:delta_0}
\end{figure}
We then have 
\begin{lemma}
\label{lem:derivative_of_g}
 Suppose that $\Theta = (\theta_m)_{m\in\N}$ fulfills the assumptions in Section~\ref{sec:assumption_theta} and that $g_\Theta(t)$ can be analytically extended 
 to some $\Delta_0(R,\phi)$. Then the assumption \eqref{eq:g_theta_to_lexi} is  fulfilled.
\end{lemma}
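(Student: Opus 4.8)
The plan is to write $g_\Theta$ near its singularity as an explicit ``singular part'' plus a holomorphic remainder, differentiate each separately, and translate the result back to the variable $w$. Concretely, combining the assumptions of Section~\ref{sec:assumption_theta} with the elementary computation carried out after Lemma~\ref{lem:asymptotic_g} (the one following \eqref{eq:g_theta_to_1}), one has
\[
g_\Theta(t) = P(-\log(1-t)) + E(t), \qquad E(t) = O((1-t)^{1/2}) \text{ as } t \to 1 .
\]
Since $g_\Theta$ is, by hypothesis, holomorphic on $\Delta_0(R,\phi)$ and $P(-\log(1-t))$ is holomorphic there as well (the argument $1-t$ avoids the negative real axis on $\Delta_0$), the remainder $E$ is holomorphic on $\Delta_0$. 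The first — and only genuinely delicate — step is to upgrade the bound $E(t)=O((1-t)^{1/2})$ so that it holds not merely on the closed unit disc, where \eqref{eq:g_theta_to_3} provides it, but throughout a slightly narrower domain $\Delta_0(R',\phi')$ with $\phi'<\pi/2$; in particular on a full two-dimensional neighbourhood of the arc of $\{|t|=1\}$ abutting $t=1$. This is exactly the kind of extension of the range of validity that is invoked in the sketch of proof of Lemma~\ref{lem:asymptotic_g}, and I would carry it out along the same lines as in \cite[Lemma~3]{Fl99} (revisiting the Lindel\"of/Mellin representation, which in fact delivers the expansion in a sector $|\arg(-\log t)|\le\pi-\epsilon$, i.e.\ in a $\Delta$-domain), the analytic continuation hypothesis being what licenses the estimates outside $|t|\le1$.

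Granting this, the two differentiations are routine. For the singular part, introduce $u:=-\log(1-t)$, so that $\frac{d}{dt}=(1-t)^{-1}\frac{d}{du}$ and, by an immediate induction,
\[
\frac{d^j}{dt^j} = \frac{1}{(1-t)^j}\,\frac{d}{du}\Bigl(\frac{d}{du}+1\Bigr)\cdots\Bigl(\frac{d}{du}+j-1\Bigr).
\]
Applying the operator on the right to $P(u)=\frac{u^{k+1}}{k+1}+\sum_{i=0}^{k}c_i u^i$: the leftmost factor $\frac{d}{du}$ forces one differentiation, lowering the degree by exactly one (here $k\ge1$, so $P'\neq0$), while each remaining factor $\frac{d}{du}+l$ with $1\le l\le j-1$ preserves the top-degree term and multiplies its coefficient by $l$; hence the leading term of the result is $(j-1)!\,u^k$ and
\[
\frac{d^j}{dt^j}P(-\log(1-t)) = \frac{(j-1)!\,(-\log(1-t))^k}{(1-t)^j} + O\!\left(\frac{(-\log(1-t))^{k-1}}{(1-t)^j}\right).
\]
For the remainder I would apply Cauchy's formula $E^{(j)}(t)=\frac{j!}{2\pi i}\oint_{|\zeta-t|=\rho}E(\zeta)(\zeta-t)^{-j-1}\,d\zeta$ with $\rho=c\,|1-t|$ for a small fixed $c>0$ chosen so that the circle stays inside the domain obtained in the first step; since $|1-\zeta|\asymp|1-t|$ on that circle, the bound on $E$ yields the standard singular-differentiation estimate $E^{(j)}(t)=O((1-t)^{1/2-j})$.

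It remains to set $t=e^{-w}$ and collect terms. From $1-e^{-w}=we^{-w}(1+O(w))$ one gets $(1-e^{-w})^{-j}=(e^{w}/w)^{j}(1+O(w))$ and $-\log(1-e^{-w})=-\log w + O(w)$, hence $(-\log(1-e^{-w}))^{k}=(-\log w)^{k}+O(w(-\log w)^{k-1})$. Substituting these into the two displays above, and observing that the spurious error terms so produced, namely $w(e^{w}/w)^{j}(-\log w)^{k}$ and $E^{(j)}(e^{-w})=O(w^{1/2-j})$, are both $o((e^{w}/w)^{j}(-\log w)^{k-1})$ as $w\to0$ with $\Re(w)\ge0$, one obtains
\[
g_\Theta^{(j)}(e^{-w}) = (j-1)!\left(\frac{e^{w}}{w}\right)^{j}(-\log w)^{k} + O\!\left(\left(\frac{e^{w}}{w}\right)^{j}(-\log w)^{k-1}\right),
\]
which is precisely \eqref{eq:g_theta_to_lexi}. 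As indicated, the crux of the argument is the first step: the bookkeeping of error terms is harmless, but differentiating at points of $\{|t|=1\}$ near $t=1$ forces the use of Cauchy circles that leave the unit disc, so one really does need the remainder estimate on a genuine $\Delta$-domain rather than merely on $\{|t|\le1\}$, and this is where the hypothesis of analytic continuation to $\Delta_0(R,\phi)$ enters.
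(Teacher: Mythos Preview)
Your argument is correct and uses the same tool as the paper --- Cauchy's integral formula for higher derivatives --- only spelled out in full where the paper is content with a single sentence; your explicit operator identity for $\frac{d^j}{dt^j}P(-\log(1-t))$ and the Cauchy estimate on the remainder are precisely the standard singular-differentiation machinery (cf.~\cite[\S VI]{FlSe09}).

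One caveat on the step you flag as delicate. Extending the bound $E(t)=O((1-t)^{1/2})$ from $\{|t|\le 1\}$ to a genuine $\Delta$-domain via the Lindel\"of/Mellin representation, as in \cite{Fl99}, is specific to the polylogarithm $\theta_m=\log^k m$ and does not obviously carry over to arbitrary $\Theta$ satisfying only the abstract hypotheses of Section~\ref{sec:assumption_theta}: analytic continuation of $g_\Theta$ to $\Delta_0$ by itself gives no a~priori growth control on $E$ outside the unit disc. The paper's one-line proof glosses over the same point, and since the lemma is in practice only invoked for $\theta_m=\log^k m$ --- where Lemma~\ref{lem:asymptotic_g} already furnishes the expansion in the wider range $|\arg w|\le\pi-\epsilon$, hence on a $\Delta$-domain --- the issue is harmless here.
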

The lemma follows immediately with Cauchy's integral formula for higher order derivatives.
Lemma~\ref{lem:asymptotic_g} and~\ref{lem:derivative_of_g} immediately imply that
$$
 g_\Theta(t)=\sum_{m=1}^\infty \frac{\log^k m}{m} t^m
 $$
fulfils the assumption~\eqref{eq:g_theta_to_lexi}. 
We now can show
\begin{theorem}
\label{thm:L1L2}
Suppose that $\Theta = (\theta_m)_{m\in\N}$ fulfils the assumptions in Section~\ref{sec:assumption_theta} and the assumption \eqref{eq:g_theta_to_lexi}. 
If $\Sn$ is endowed with $\mathbb{P}_\Theta$, we then have for each $b\in\N$
\begin{align}
  \left(\frac{L_1\cdot r^k}{n}, \frac{L_2\cdot r^k}{n},\ldots, \frac{L_b\cdot r^k}{n} \right) 
 \stackrel{d}{\longrightarrow}
 (E_1,E_2,\ldots, E_b),
\end{align}
where $(E_m)_{m=1}^b$ are iid exponential distributed random variables with parameter $1$.
\end{theorem}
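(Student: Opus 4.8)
The plan is to compute the joint moments of $(L_1,\dots,L_b)$ via a generating-function identity and then apply the method of moments, exactly in the spirit of the classical computation for the uniform and Ewens measures. The starting point is the well-known recursive structure: conditionally on $L_1 = \ell$, the remaining cycles of $\sigma$ (after deleting the cycle through $1$) form a random permutation of $n-\ell$ elements distributed according to $\mathbb{P}_\Theta$ on $\mathfrak{S}_{n-\ell}$, with the cycle through $1$ contributing a weight $\theta_\ell$ and $(n-1)(n-2)\cdots(n-\ell+1) = (n-1)!/(n-\ell)!$ ways to choose its remaining elements. This gives
\begin{align}
\PT{L_1 = \ell_1, L_2=\ell_2,\dots, L_b = \ell_b}
=
\frac{1}{h_n}\,\frac{\theta_{\ell_1}\cdots\theta_{\ell_b}}{n(n-1)\cdots(n-\ell_1-\cdots-\ell_b+1)}\;h_{n-\ell_1-\cdots-\ell_b},
\end{align}
valid as long as $\ell_1+\cdots+\ell_b \le n$. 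Summing against test functions, or more conveniently computing the falling-factorial-type moments $\E{(L_1)^{a_1}\cdots}$ directly, one reduces everything to ratios $h_{n-j}/h_n$ and sums of the form $\sum_m \theta_m m^a t^m$, which are controlled by Lemma~\ref{lem:asymptotic_g} and assumption~\eqref{eq:g_theta_to_lexi}. The details of turning the recursion into a clean generating identity are standard and can be cited from \cite{NiZe11}; I would record the resulting formula for $\E{\prod_{i=1}^b f_i(L_i)}$ in terms of $[t^n]$ of a product of $g_\Theta$-derivatives times $\exp(g_\Theta(t))$.

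Concretely, the mixed moment that one wants is, for bounded continuous test functions or for the Laplace transform, expressible as
\begin{align}
\E{\prod_{i=1}^b e^{-\lambda_i L_i r^k/n}}
=
\frac{1}{h_n}[t^n]\left( \prod_{i=1}^{b} \widehat{g}_i(t) \right)\exp\big(g_\Theta(t)\big),
\end{align}
where each $\widehat{g}_i(t)$ is obtained from $g_\Theta$ by inserting the factor $e^{-\lambda_i m r^k/n}$ into the $m$-th coefficient; after the substitution $t = e^{-w}$ and using the saddle point $r$ from \eqref{eq:thm:aux_asypmtotic_saddle_eq4}, the rescaling $m \mapsto m r^k / n$ is exactly tuned so that $m$ of order $n/r^k$ (the typical cycle length, by \eqref{eq:P'(r)} since $ne^{-r}\sim r^k$) contributes an order-one amount. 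The key analytic input is that $g_\Theta^{(j)}(e^{-w}) \sim (j-1)!\,(e^w/w)^j(-\log w)^k$ from \eqref{eq:g_theta_to_lexi}, so that near the saddle $w = e^{-r}$ one has $g_\Theta^{(j)}(t) \approx (j-1)!\, e^{jr} r^k$, i.e. each derivative produces a factor $e^r r^{k/j}\cdots$ — more precisely the normalization $L_i r^k/n$ is the right one to see a $\Gamma$-type integral emerge. I would then apply Corollary~\ref{cor:aux_asypmtotic2} (with $j$ and $c_f$ read off from the asymptotics of the relevant modified generating function, which behaves like $c_f (-\log w)^k/w^j$) to evaluate the $[t^n]$ asymptotically, divide by $h_n$ using Section~\ref{sec:hn}, and check that the limit is $\prod_{i=1}^b (1+\lambda_i)^{-1}$, the product of Laplace transforms of iid Exp$(1)$ variables.

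The main obstacle I anticipate is bookkeeping the $r^k/n$ rescaling cleanly inside the saddle-point analysis: the test-function factor $e^{-\lambda_i m r^k/n}$ is not holomorphic with radius of convergence $>1$, so Corollary~\ref{cor:aux_asypmtotic} does not apply verbatim, and one must instead track how this factor deforms the integrand in a neighbourhood of the saddle $w=e^{-r}$ and verify it only contributes through its value at $w \approx e^{-r}$ (giving $e^{-\lambda_i \cdot (\text{order }1)}$) while the tails remain negligible — essentially re-running the proof of Theorem~\ref{thm:aux_asypmtotic} with the extra factor carried along, as was done for $f(t)$ in Corollary~\ref{cor:aux_asypmtotic2}. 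A secondary technical point is handling the case $b>1$: one must show the events $\{L_1 = \ell_1\},\dots$ decouple in the limit, which follows because after removing $b$ cycles of typical length $O(n/r^k) = o(n)$ one still has $n - o(n)$ elements and $h_{n-o(n)}/h_n \to 1$ at the relevant scale; making the $o(n)$ uniform over the range of $\ell_i$ that carries the mass is the part that needs a little care. Once the single-coordinate Laplace transform is established, the joint statement follows by the same computation applied to the product, together with L\'evy/Lévy-type continuity for Laplace transforms on $[0,\infty)^b$.
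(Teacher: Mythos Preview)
Your overall strategy—recover the joint law of $(L_1,\dots,L_b)$ from the size-biased recursion, pass to a generating-function identity, and extract asymptotics by saddle point—is sound, but it is not the route the paper takes, and your concrete formulas contain errors that would need fixing.

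\textbf{What the paper does.} The paper proves $b=1$ by the \emph{method of moments}: from $\PT{L_1=m}=\tfrac{\theta_m}{n}\tfrac{h_{n-m}}{h_n}$ one gets the exact identity
\[
\ET{(L_1-1)_j}=\frac{1}{n h_n}\,[t^n]\,t^{j}\,g_\Theta^{(j+1)}(t)\,\exp\bigl(g_\Theta(t)\bigr),
\]
and then applies Corollary~\ref{cor:aux_asypmtotic2} directly, because $t^{j}g_\Theta^{(j+1)}(t)$ is a \emph{fixed} function whose singular expansion is precisely assumption~\eqref{eq:g_theta_to_lexi}. This yields $\ET{(L_1-1)_j}\sim j!\,(n/r^k)^{j}$, i.e.\ the moments of $\mathrm{Exp}(1)$ after rescaling. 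No modification of the saddle-point machinery is needed; this is exactly why assumption~\eqref{eq:g_theta_to_lexi} is introduced. The case $b>1$ is then declared analogous with a reference.

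\textbf{How your plan differs, and what to watch.} You propose to compute the Laplace transform, inserting $e^{-\lambda_i m r^k/n}$ into the coefficients. As you correctly note, this makes the prefactor $n$-dependent, so neither Corollary~\ref{cor:aux_asypmtotic} nor~\ref{cor:aux_asypmtotic2} applies off the shelf, and you must re-run Theorem~\ref{thm:aux_asypmtotic} with the deformed integrand. This can be made to work (at the saddle the deformation amounts to evaluating $g_\Theta'$ at a shifted point and produces the factor $(1+\lambda_i)^{-1}$), but it is more labour than the paper's route, which was designed so that the existing corollary fires immediately. Two concrete slips to repair: (i) the joint law has denominator $n(n-\ell_1)(n-\ell_1-\ell_2)\cdots(n-\ell_1-\cdots-\ell_{b-1})$, not $n(n-1)\cdots(n-\sum\ell_i+1)$; (ii) because of these $1/(n-\cdots)$ factors, the displayed identity $\E{\prod e^{-\lambda_i L_i r^k/n}}=\tfrac{1}{h_n}[t^n]\bigl(\prod\widehat g_i\bigr)\exp(g_\Theta)$ is not exact—already for $b=1$ a factor $1/n$ is missing, and for $b>1$ there is no clean product formula. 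You would instead need to iterate the one-step identity and control $h_{n-\ell}/h_n$ uniformly, which you anticipate but should make explicit.
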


\begin{proof}
 We prove first the case $b=1$. We have
 \begin{align}
  \Pb{L_1= m} = \frac{\theta_m}{n} \frac{h_{n-m}}{h_n}.
  \label{eq:l1_dist}
 \end{align}
The proof of \eqref{eq:l1_dist} can be found for instance in \cite[Proposition~2.1]{ErUe11}.
We now claim that we have for each $j\in\N$
\begin{align}
 \ET{(L_1-1)_j} 
 &=
 \frac{1}{nh_n}[t^n]\, t^j g_\Theta^{(j+1)}(t) \exp\left( g_\Theta(t) \right),
\end{align}
where $(x)_j = x(x-1)\cdots (x-j+1)$  denotes the falling factorial.
Indeed, using \eqref{eq:l1_dist} gives
\begin{align*}
 \ET{(L_1-1)_j} 
 &=
 \frac{1}{h_n}
 \sum_{m=1}^n
 (m-1)_j \cdot\Pb{L_1= m} 
 =
  \frac{1}{n h_n}
 \sum_{m=1}^n
 (m-1)_j \cdot\theta_m h_{n-m}\\
 &=
 \frac{1}{n h_n}
 [t^n] \left( \sum_{m=1}^n (m-1)_j\cdot\theta_m t^m\right) \exp(g_\Theta(t))
 =
  \frac{1}{nh_n}[t^n]\, t^j g_\Theta^{(j+1)}(t) \exp\left( g_\Theta(t) \right).
 \end{align*}
We now can use Corollary~\ref{cor:aux_asypmtotic2} together with with assumption \eqref{eq:g_theta_to_lexi} to compute $ \ET{(L_1-1)_j} $. 
We obtain
\begin{align*}
   \ET{(L_1-1)_j} 
   &=
  \frac{j!}{n} r^k e^{(j+1)r} (1+o(1))
    =
  j! \frac{n^j r^k}{(ne^{-r})^{j+1}} (1+o(1))\\
  &= 
  j! \left(\frac{n}{r^k}\right)^j (1+o(1)),
\end{align*}
where we have used on the last line that $ne^{-r} = P'(r) \sim r^k$.
This immediately implies with a simple induction that
\begin{align}
   \ET{\left(\frac{L_1\cdot r^k}{n}\right)^j} 
  &= 
  j! \left(\frac{n}{r^k}\right)^j(1+o(1)).
   \label{eq:forL1:last}
\end{align}
Since \eqref{eq:forL1:last} holds for each $j\in\N$, we get that $\frac{L_1\cdot r^k}{n}$ converges in distribution to exponential distributed random variables with parameter $1$.
This completes the case $b=1$. 
 The proof of the case $b>1$ is similarly and we thus omit it. However, the interested reader can find more details for instance in \cite[Lemma~5.7]{BoZe14} or in \cite{StZe14a}.
\end{proof}

\section{Limit shape}
\label{sec:limit_shape}

We consider in this section the shape of Young diagrams associated to random permutations and 
study the typical behavior as $n\to\infty$ with respect to the measure $\mathbb{P}_\Theta$ under the assumptions in Section~\ref{sec:assumption_theta}.
We show that this shape converges to a limit shape and that fluctuations near a point of this limit shape behave like a normal random variable.
In this section we shall mainly follow the techniques from \cite{CiZe13}. We first define
\begin{align}
\label{eq:def_shape_fkt}
w_n(x)
=
\sum_{k\geq x}C_k.
\end{align}
The function $w_n(x)=w_n(x,\sigma)$ is as a function in $x$  piecewise constant and right continuous. 
Further $w_n(x,\sigma)$ can be interpreted as the upper boundary of the Young diagram corresponding the cycle type of the permutation $\sigma$.
A detailed illustration of this can be found in \cite[Section~1]{CiZe13}.

The limit shape of the process $w_n(x)$ as $n\to\infty$ with of the respect to probability measures $\mathbb{P}_\Theta$ on $\Sn$ 
(and sequences of positive real numbers $\overline{n}$ and $n^*$ with $\overline{n} \cdot n^* =n$) is understood as a function $w_\infty:\R^+ \to \R^+$ 
such that for each $\eps,\delta>0$
\begin{align}
\label{eq:def_limit_shape}
\lim_{n\to+\infty} \PT{\{\sigma\in\Sn:\, \sup_{x\geq \delta} |(\overline{n})^{-1} w_n (xn^*)- w_\infty(x)| \leq \epsilon \}  }
= 1.
\end{align}
The assumption $\overline{n} \cdot n^* =n$ ensures that the area under the rescaled Young diagram is $1$.
One of the most frequent choices is $\overline{n}=n^*=n^{1/2}$, however this is often not the optimal choice.
The computations in Section~\ref{sec:cycles} suggests that  the length of a typical cycle has order of magnitude $n/r^k$.
It is thus natural to choose
\begin{align}
 n^*
 = 
 \frac{n}{r^k}
 \ \text{ and } \
 \overline{n}
 = 
 r^k
 \label{eq:def_n*}
\end{align}
with $r$ the solution of the equation \eqref{eq:thm:aux_asypmtotic_saddle_eq}.

The next natural question is then whether fluctuations satisfy a central limit theorem, namely whether 
$$
(\overline{n})^{-1}  w_n(n^* x)-w_\infty(x)
$$
converges for a given $x$ (after centering and applying normalization) to a normal distribution. Also it is natural to ask
if the process converges in distribution to a Gaussian process on the space of c\`adl\`ag functions. 
Of course the role of the probability measure on $\Sn$ is important for that.

We first consider the behavior for a given $x>0$. We have
\begin{theorem}
\label{thm:limit_shape_saddle}
Let $k\geq 3$ and $n^*$ and $\overline{n}$ be as in \eqref{eq:def_n*} and suppose that $\mathbb{P}_\Theta$ fulfills the assumptions in Section~\ref{sec:assumption_theta}.
We then have the following results.
\begin{enumerate}
 \item The limit shape exists for the process $w_n(x)$ as $n\to\infty$ with the scaling $n^*$ and $\overline{n}$ as in \eqref{eq:def_n*}
 and the limit shape is given by
 \begin{align}
   w_\infty(x) =  \int_{x}^\infty  u^{-1} e^{-u}\,du.
   \label{eq:def_w_infinity}
 \end{align} 
 \item The fluctuations at a point $x$ of the limit shape behave like
 \begin{eqnarray}&&
\widetilde w_n(x)
:=
\frac{w_n(xn^*)-\overline n\left( w_\infty(x)+z_n(x)\right)}{(\overline n)^{1/2}}
\stackrel{\mathcal L}{\longrightarrow}
\mathcal N\left(0, \sigma_\infty^2(x)\right)
\label{eq:CLT_Pt}
\end{eqnarray}
with
 $$
 \sigma_\infty^2(x):=e^{-2x}+w_\infty(x)
 $$
 and $z_n=O(1/\log n)$.
 \end{enumerate}
\end{theorem}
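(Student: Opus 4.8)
The plan is to establish part~(2) first, by a moment-generating-function argument resting on a second-order refinement of the saddle-point analysis of Theorem~\ref{thm:aux_asypmtotic}, and then to deduce part~(1) from part~(2) together with the monotonicity of $x\mapsto w_n(x)$; this follows the overall strategy of \cite{CiZe13}. Write $y=xn^*$ and $G_y(t):=\sum_{m\ge y}\frac{\theta_m}{m}t^m=g_\Theta(t)-\sum_{m<y}\frac{\theta_m}{m}t^m$. As in the proofs of Theorems~\ref{thm:limit_theorem_cycles} and~\ref{thm:limit_theorem_total_cycles} (i.e.\ via Lemma~\ref{lem:cycle_index_theorem}) one has, as formal power series in $t$,
\[
\sum_{n=0}^\infty h_n\,\ET{e^{s\,w_n(y)}}\,t^n=\exp\bigl(g_\Theta(t)+(e^{s}-1)G_y(t)\bigr),
\]
so $\ET{e^{s\,w_n(xn^*)}}=h_n^{-1}\,[t^n]\exp\bigl(g_\Theta(t)+(e^{s}-1)G_y(t)\bigr)$. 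With $r$ the solution of $P'(r)=ne^{-r}$ --- so $\overline n=r^k$, $n^*=n/r^k$, $e^{-r}\sim(n^*)^{-1}$ and $y\,e^{-r}\to x$ --- the first task is the behaviour of $G_y$ near $w=e^{-r}$: an Euler--Maclaurin comparison of $\sum_{m\ge y}\frac{\theta_m}{m}e^{-mw}$ with $\int_y^\infty\frac{\log^k u}{u}e^{-uw}\,du$ (for the general weights of Section~\ref{sec:assumption_theta}, after first passing from the asymptotics of $g_\Theta(e^{-w})$ to those of the truncated sum $\sum_{m<y}\frac{\theta_m}{m}t^m$ by a Tauberian theorem) gives $G_y(e^{-e^{-r}})=\overline n\,w_\infty(x)(1+o(1))$ and $-\frac{d}{dw}G_y(e^{-w})\big|_{e^{-r}}=\sum_{m\ge y}\theta_m e^{-me^{-r}}=n\,e^{-x}(1+o(1))$, together with $\frac{d^2}{dw^2}G_y(e^{-w})\big|_{e^{-r}}=O(ne^{r})$, the slow variation of $G_y$ and its $\varphi$-derivatives over the saddle window $\{w=e^{-r}e^{i\varphi}:|\varphi|\le\delta\}$, and the crude bound $|\Re G_y(e^{-w})|=O(\overline n)$ on the arcs $\gamma''_1,\gamma''_3$ of Theorem~\ref{thm:aux_asypmtotic}. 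Already the first of these identifies the limit shape as $w_\infty$.

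I would then rerun the contour argument of Theorem~\ref{thm:aux_asypmtotic} for $[t^n]\exp(g_\Theta(t)+(e^{s}-1)G_y(t))$ with $s\to0$, say with $\overline n^{1/2}|s|$ bounded. Because $(e^{s}-1)G_y$ is then of size $O(\overline n^{1/2})=o(\overline n)\ll P(r)$ near $w=e^{-r}$, and $o(\overline n)$ against the exponential gaps on $\gamma''_1,\gamma''_3$ and on the non-central part of $\gamma''_2$, the saddle, the Gaussian scale ($\asymp ne^{-r}\asymp\overline n$), and all off-saddle estimates survive unchanged; the new ingredient is to expand the exponent on $\gamma''_2$ around $\varphi=0$ to second order in $(e^{s}-1)$ --- the term $(e^{s}-1)G_y(e^{-e^{-r}})$ enters linearly, while the linear-in-$\varphi$ term $(e^{s}-1)\frac{d}{dw}G_y(e^{-w})\big|_{e^{-r}}\cdot ie^{-r}\varphi$ combines with the main Gaussian and, on completing the square, produces a quadratic-in-$(e^{s}-1)$ correction. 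Dividing by $h_n$ (Theorem~\ref{thm:aux_asypmtotic} with $v=1$) and expanding, one obtains, uniformly for $|s|\le\overline n^{-1/2}$,
\[
\ET{e^{s\,w_n(xn^*)}}=\exp\!\Bigl(s\,\mu_n+\tfrac12 s^2\,v_n\Bigr)(1+o(1)),
\]
where $\mu_n:=\ET{w_n(xn^*)}=\overline n\bigl(w_\infty(x)+z_n(x)\bigr)$ with $z_n(x)=O(1/\log n)$ (here the $O(\log^{-k/2}n)$ error of Theorem~\ref{thm:aux_asypmtotic} has to be negligible against the $O(1/\log n)$ correction that $z_n$ absorbs, which is where $k\ge3$ enters), and $v_n:=\var_\Theta(w_n(xn^*))=\overline n\,\sigma_\infty^2(x)(1+o(1))$ with $\sigma_\infty^2(x)$ as in the statement, arising as the quadratic-in-$s$ coefficient of the expansion (the $\overline n\,w_\infty(x)$ from the linear term together with the square-completion correction). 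Since $\widetilde w_n(x)=(w_n(xn^*)-\mu_n)\,\overline n^{-1/2}$, taking $s=\xi\,\overline n^{-1/2}$ with $\xi\in\R$ fixed gives $\ET{e^{\xi\widetilde w_n(x)}}=e^{-s\mu_n}\ET{e^{s\,w_n(xn^*)}}\to\exp\bigl(\tfrac12\xi^2\sigma_\infty^2(x)\bigr)$; convergence of the moment generating function on a neighbourhood of $0$ then yields $\widetilde w_n(x)\stackrel{d}{\to}\mathcal N\bigl(0,\sigma_\infty^2(x)\bigr)$, i.e.\ \eqref{eq:CLT_Pt} (for this last step we invoke \cite[Theorem~2.2]{Ca07}, exactly as in the proof of Theorem~\ref{thm:limit_theorem_total_cycles}).

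For part~(1): since $z_n(x)\to0$, part~(2) gives $\overline n^{-1}w_n(xn^*)-w_\infty(x)=\overline n^{-1/2}\widetilde w_n(x)+z_n(x)\to0$ in $\mathbb{P}_\Theta$-probability for every fixed $x>0$ (and uniformly for $x$ in compact subsets of $(0,\infty)$). To upgrade to uniformity over $x\ge\delta$, fix $\eps,\delta>0$; as $w_\infty$ is continuous and strictly decreasing with $w_\infty(x)\to0$ as $x\to\infty$, pick $M>\delta$ with $w_\infty(M)<\eps/4$ and a partition $\delta=x_0<x_1<\dots<x_N=M$ with $w_\infty(x_{i-1})-w_\infty(x_i)<\eps/4$. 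On the event $A_n:=\bigcap_{i=0}^N\{|\overline n^{-1}w_n(x_i n^*)-w_\infty(x_i)|\le\eps/4\}$, whose probability tends to $1$ by the pointwise statement and a union bound, monotonicity of $x\mapsto w_n(xn^*)$ and of $w_\infty$ gives, for $x\in[x_{i-1},x_i]$,
\[
w_\infty(x)-\tfrac{\eps}{2}\le w_\infty(x_i)-\tfrac{\eps}{4}\le\overline n^{-1}w_n(x_i n^*)\le\overline n^{-1}w_n(xn^*)\le\overline n^{-1}w_n(x_{i-1}n^*)\le w_\infty(x_{i-1})+\tfrac{\eps}{4}\le w_\infty(x)+\tfrac{\eps}{2},
\]
while for $x\ge M$ one has $0\le w_\infty(x)<\eps/4$ and $0\le\overline n^{-1}w_n(xn^*)\le\overline n^{-1}w_n(Mn^*)\le w_\infty(M)+\eps/4<\eps/2$; hence $\sup_{x\ge\delta}|\overline n^{-1}w_n(xn^*)-w_\infty(x)|<\eps$ on $A_n$, which is \eqref{eq:def_limit_shape} with $w_\infty$ as in \eqref{eq:def_w_infinity}.

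The main obstacle is the refined saddle-point step: propagating the analysis of Theorem~\ref{thm:aux_asypmtotic} through the $n$-dependent perturbation $(e^{s}-1)G_y(t)$, isolating the quadratic coefficient $\sigma_\infty^2(x)$ exactly, and --- most delicately --- controlling $\ET{w_n(xn^*)}$ to within $o(\overline n^{1/2})$ so that the centering in \eqref{eq:CLT_Pt} is the correct one; this is precisely where the hypothesis $k\ge3$ is needed. Obtaining the near-saddle asymptotics of $G_y$ (with the Tauberian input for the general weights of Section~\ref{sec:assumption_theta}) is the other main ingredient, but it amounts to routine polylogarithm/Mellin analysis.
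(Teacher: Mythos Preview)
Your overall strategy matches the paper's: prove the CLT in part~(2) by computing the Laplace transform of $w_n(xn^*)$ via Lemma~\ref{lem:cycle_index_theorem} and a saddle-point analysis in the style of Theorem~\ref{thm:aux_asypmtotic}, then deduce part~(1) by the monotonicity sandwich argument at a finite grid of points. Your proof of part~(1) is essentially identical to the paper's. The genuine methodological difference is in part~(2): you propose to keep the \emph{unshifted} saddle $r$ (the solution of $P'(r)=ne^{-r}$) and to absorb the linear-in-$\varphi$ term coming from $(e^{s}-1)G_y$ by completing the square in the Gaussian integral, extracting $\sigma_\infty^2(x)$ from the resulting quadratic-in-$(e^{s}-1)$ correction. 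The paper instead moves the saddle to an $s$-dependent point $r'=r+v$, with $v=(e^{-s^*}-1)r^k e^{-x}/(ne^{-r})$, chosen precisely so that the linear-in-$\varphi$ term becomes $o(\sqrt{b(r')})$; all of the $s$-dependence is then concentrated in $f(0)$ evaluated at $r'$, and $\sigma_\infty^2(x)=e^{-2x}+w_\infty(x)$ emerges from expanding $P(r')+ne^{-r'}+(e^{-s^*}-1)\sum_{m\ge xn^*}\tfrac{\theta_m}{m}e^{-me^{-r'}}$ to second order in $s$ (equations \eqref{eq:limit_shape_P}--\eqref{eq:limit_shape_ugly_sum}). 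The two devices are equivalent in principle, but the shifted-saddle version is cleaner: it keeps the remainder analysis identical in form to that of Theorem~\ref{thm:aux_asypmtotic} and avoids the need to justify a contour displacement into complex $\varphi$ when completing the square.

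One point in your outline is misplaced: you attribute the hypothesis $k\ge 3$ to the requirement that the $O(\log^{-k/2}n)$ multiplicative error in Theorem~\ref{thm:aux_asypmtotic} be dominated by the $O(1/\log n)$ correction $z_n(x)$. In the paper this is \emph{not} where $k\ge 3$ enters. The centering and the identification of $\sigma_\infty^2(x)$ go through for any $k\ge 1$; the restriction $k\ge 3$ appears only in the minor-arc estimates. Specifically, on the portion of $\gamma''_2$ with $|\varphi|\in[r^{-1/8},\pi/2]$ and on $\gamma''_1,\gamma''_3$ one only has the crude bound $\Re\bigl((e^{-s^*}-1)\sum_{m\ge xn^*}\tfrac{\theta_m}{m}e^{-me^{-r'}e^{i\varphi}}\bigr)=O(r^{k/2+1})$, and beating this with the Gaussian decay $\exp(-c\,r^{-1/4}ne^{-r})\sim\exp(-c\,r^{k-1/4})$ requires $k-\tfrac14>\tfrac{k}{2}+1$, i.e.\ $k\ge 3$ (see the passage following \eqref{eq:crude_upper_bound_w}). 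So in your write-up the off-saddle estimates do \emph{not} ``survive unchanged'' with merely $(e^{s}-1)G_y=o(\overline n)$: you need the sharper inequality above, and that is the real content of the assumption $k\ge 3$.
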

\begin{remark}
The condition $k\geq 3$ is required in the estimates used for the error terms.
However, we believe that this condition could be relaxed to $k\geq1$ by a more detailed investigation of the corresponding error terms.
\end{remark}

We prove this theorem by computing the Laplace transform of $w_n(x)$.
We have
\begin{lemma}
\label{lem:relies2}
Let $k\geq 3$. We have for bounded $s\geq 0$ and with respect to $\mathbb{P}_\Theta$ as $n\to\infty$
$$
\ET{\mathrm{exp}\bigl(-s\widetilde w_n(x)\bigr)}={\sigma_\infty^2(x)}\frac{s^2}{2}+O\big((\overline n)^{-\frac{1}{2}}s^3\big). 
$$
\end{lemma}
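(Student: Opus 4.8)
The plan is to express the Laplace transform $\ET{\exp(-s\widetilde w_n(x))}$ as a ratio of coefficients of generating functions, and then apply the saddle point machinery of Section~\ref{sec:asymp_theorems}. First I would recall, via Lemma~\ref{lem:cycle_index_theorem}, that
\begin{align*}
\sum_{n=0}^\infty h_n \ET{\exp\bigl(-s\, w_n(xn^*)\bigr)}\, t^n
=
\exp\left( \sum_{m=1}^\infty \frac{\theta_m}{m}\bigl(e^{-s\,\one(m\geq xn^*)}-1\bigr)t^m + g_\Theta(t)\right),
\end{align*}
since $w_n(xn^*)=\sum_{m\geq xn^*}C_m$ and each $C_m$ contributes a factor $e^{-s}$ or $1$ according to whether $m\geq xn^*$. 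The exponent splits as $g_\Theta(t) + (e^{-s}-1)\sum_{m\geq xn^*}\frac{\theta_m}{m}t^m = g_\Theta(t) + (e^{-s}-1)\bigl(g_\Theta(t)-g_{\Theta,<xn^*}(t)\bigr)$, so the generating function becomes $\exp\bigl(e^{-s}g_\Theta(t)\bigr)\exp\bigl(-(e^{-s}-1)g_{\Theta,<xn^*}(t)\bigr)$, where $g_{\Theta,<xn^*}(t)=\sum_{m<xn^*}\frac{\theta_m}{m}t^m$ is a polynomial (hence entire). Thus $\ET{\exp(-s\,w_n(xn^*))} = \frac{1}{h_n}[t^n]\, f_n(t)\exp\bigl(e^{-s}g_\Theta(t)\bigr)$ with $f_n(t)=\exp\bigl(-(e^{-s}-1)g_{\Theta,<xn^*}(t)\bigr)$.

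Next I would apply Theorem~\ref{thm:aux_asypmtotic} (with $v=e^{-s}$, which lies in a fixed compact interval $[v_1,v_2]$ for bounded $s\geq 0$) to evaluate $[t^n]\exp(e^{-s}g_\Theta(t))$, and a version of Corollary~\ref{cor:aux_asypmtotic} adapted to the slowly-varying factor $f_n(t)$ to evaluate the numerator; the key point is that $f_n(t)$ behaves at the saddle $t=e^{-e^{-r}}$ like $\exp\bigl(-(e^{-s}-1)g_\Theta(e^{-e^{-r}})+(e^{-s}-1)g_{\Theta,\geq xn^*}(e^{-e^{-r}})\bigr)$, and one needs the asymptotics of the ``tail sum'' $g_{\Theta,\geq xn^*}(e^{-e^{-r}}) = \sum_{m\geq xn^*}\frac{\theta_m}{m}e^{-me^{-r}}$. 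Using $\theta_m\sim r^k$ uniformly for $m$ near $n/r^k=n^*$ (which is the relevant range, since $me^{-r}\approx m/n \cdot r^k \cdot$const is of order $1$ precisely there) and comparing the sum to the integral $r^k\int_x^\infty u^{-1}e^{-u}\,du = \overline n\, w_\infty(x)$, one obtains the leading term. Carrying this out and combining with the saddle-point evaluation of $h_n$ from Section~\ref{sec:hn}, I expect $\ET{\exp(-s\,w_n(xn^*))} = \exp\bigl(\overline n\,\bigl(z_n(x) + w_\infty(x)(e^{-s}-1) + \tfrac12 e^{-2x}(e^{-s}-1)^2 + \cdots\bigr)\bigr)$ for suitable $z_n=O(1/\log n)$ coming from the lower-order correction terms in the saddle expansion; here the quadratic-in-$(e^{-s}-1)$ term with coefficient $\tfrac12 e^{-2x}$ arises from the second-order fluctuation of the saddle, and it is this term that supplies the $e^{-2x}$ in $\sigma_\infty^2(x)$.

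Finally, I would substitute $s\mapsto s/\sqrt{\overline n}$ (as in the proof of Theorem~\ref{thm:limit_theorem_total_cycles}), recenter by $\overline n(w_\infty(x)+z_n(x))$ and rescale by $(\overline n)^{1/2}$, giving
\begin{align*}
\ET{\exp\bigl(-s\widetilde w_n(x)\bigr)}
=
\exp\left( \overline n\Bigl( w_\infty(x)\bigl(e^{-s/\sqrt{\overline n}}-1\bigr) + \tfrac12 e^{-2x}\bigl(e^{-s/\sqrt{\overline n}}-1\bigr)^2\Bigr) + s\sqrt{\overline n}\,w_\infty(x) + O\bigl(\tfrac{1}{\log n}\bigr)\right),
\end{align*}
and expanding $e^{-s/\sqrt{\overline n}}-1 = -s/\sqrt{\overline n}+s^2/(2\overline n)+O(\overline n^{-3/2})$ collapses the linear terms and leaves $\bigl(w_\infty(x)+e^{-2x}\bigr)\tfrac{s^2}{2}+O\bigl((\overline n)^{-1/2}s^3\bigr) = \sigma_\infty^2(x)\tfrac{s^2}{2}+O((\overline n)^{-1/2}s^3)$, as claimed. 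The main obstacle will be the second step: establishing, with a uniform error of size $O(1/\log n)=o(1/\overline n^{1/2})$ after the eventual rescaling, that the tail sum $\sum_{m\geq xn^*}\frac{\theta_m}{m}e^{-me^{-r}}$ equals $\overline n\,w_\infty(x)$ plus controlled corrections, and — more delicately — that the second-order saddle fluctuation contributes exactly $\tfrac12 e^{-2x}$; this is precisely where the hypothesis $k\geq 3$ is needed to make the discarded error terms genuinely negligible. The case $b$-type refinements are not needed here, but the bookkeeping of which corrections land in $z_n$ versus the surviving quadratic term must be done carefully, following the template of \cite{CiZe13}.
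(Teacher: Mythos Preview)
Your overall strategy --- express the Laplace transform as a ratio of coefficients, evaluate by the saddle-point method, identify the tail sum $\sum_{m\geq xn^*}\frac{\theta_m}{m}e^{-me^{-r}}$ with $\overline n\,w_\infty(x)$, then rescale --- matches the paper. But the specific implementation you propose, via the decomposition
\[
\exp\bigl(g_\Theta(t)+(e^{-s}-1)g_{\Theta,\geq xn^*}(t)\bigr)=\exp\bigl(e^{-s}g_\Theta(t)\bigr)\cdot f_n(t),
\qquad f_n(t)=\exp\bigl(-(e^{-s}-1)g_{\Theta,<xn^*}(t)\bigr),
\]
and then applying Theorem~\ref{thm:aux_asypmtotic} with $v=e^{-s}$ together with a ``version of Corollary~\ref{cor:aux_asypmtotic}'', has a real gap. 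The factor $f_n$ is not a benign bounded correction: at the saddle circle its logarithm is of size $\asymp s^*\,r^{k+1}$, comparable to the main exponent, and it genuinely shifts the saddle in an $x$-dependent way. Corollary~\ref{cor:aux_asypmtotic} needs $f$ fixed and uniformly $O(1)$, which fails here; and if you freeze $f_n$ at the saddle $r_{s^*}$ of $\exp(e^{-s^*}g_\Theta)$ (which depends only on $s^*$, not on $x$), the second-order term you extract is $\tfrac12 G''(1)(e^{-s^*}-1)^2$ with $G(v)=vP(\rho(v))+ne^{-\rho(v)}$, and this does \emph{not} carry the factor $e^{-2x}$. Your decomposition also forces a delicate cancellation of two terms of order $r^{k+1}$ (namely $e^{-s^*}P(r_{s^*})$ against $-(e^{-s^*}-1)\sum_{m<xn^*}\theta_m/m$), whereas the quantity that actually matters is only of order $r^k$.

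The paper avoids this by \emph{not} splitting the integrand. It applies Cauchy's formula to $\exp\bigl(g_\Theta(t)+(e^{-s^*}-1)\sum_{m\geq xn^*}\frac{\theta_m}{m}t^m\bigr)$ directly and runs the saddle-point argument with a shifted radius $r'=r+v$, where
\[
v=\frac{(e^{-s^*}-1)\,r^k e^{-x}}{ne^{-r}},
\]
chosen so that the linear coefficient $a(r')$ in the Taylor expansion of the full exponent vanishes to leading order (this uses Lemma~\ref{lem:eulermac1} to evaluate the derivative of the tail sum). The point is that $v$ itself carries the factor $e^{-x}$; expanding $P(r+v)+ne^{-r-v}$ to second order in $v$ then produces the term $\tfrac12 e^{-2x}(e^{-s^*}-1)^2\overline n$, and combining with the $\tfrac12 w_\infty(x)(e^{-s^*}-1)^2\overline n$ from the tail sum gives $\sigma_\infty^2(x)=e^{-2x}+w_\infty(x)$ transparently. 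The hypothesis $k\geq 3$ enters only in the tail estimate on the arc $[r^{-1/8},\pi/2]$, where one needs $r^{-1/4}ne^{-r}\sim r^{k-1/4}$ to dominate the crude bound $O(r^{k/2+1})$ on the tail sum, i.e.\ $k-\tfrac14>\tfrac{k}{2}+1$.

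In short: your outline is right in spirit, but the factorisation you propose does not reduce the problem to the black boxes of Section~\ref{sec:asymp_theorems}; one must redo the saddle-point analysis with an $x$-dependent saddle shift, which is exactly what the paper does.
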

We will give the proof of Lemma~\ref{lem:relies2} in Section~\ref{sec:proof_of_lemma52}.
However, we show immediately that Lemma~\ref{lem:relies2}  implies Theorem~\ref{thm:limit_shape_saddle}. 
The structure of the proof is similar to the one appearing in \cite{CiZe13}, and we give the proof for the convince of the reader.

\begin{proof}[Proof of Theorem~\ref{thm:limit_shape_saddle}]
Theorem~2.2 in \cite{Ca07} shows that it is sufficient to compute the Laplace transform for $s\geq0$ to establish the CLT. 
Therefore Lemma~\ref{lem:relies2} immediately implies the second point of Theorem~\ref{thm:limit_shape_saddle}.
Thus it remains to show that that $w_\infty(x)$ is the limit shape.
Let $\eps>0$ be arbitrary and choose $0=x_0 <x_1<\dots<x_\ell$ such that 
$$w_\infty(x_{j+1})-w_\infty(x_{j})<\eps/2 \text{ for }1\leq j \leq \ell-1 \text{ and } w_\infty(x_{\ell})<\eps/2.$$ 
We now claim that we have for each $x\in\R^+$
\begin{align}
|(\overline n)^{-1}w_n(xn^*)-w_\infty(x)|>\eps 
\ \Longrightarrow \ 
\exists j \text{ with }|(\overline n)^{-1}w_n(x_jn^*)-w_\infty(x_j)|>\eps/2.
\label{eq:Limit_shape_exists_proof}
\end{align}
Indeed, let us for consider first the case $(\overline n)^{-1}w_n(x^*)-w_\infty(x)>\eps$.
Clearly, there exists a $j$ such that $x_j\leq x\leq x_{j+1}$. 
Since $w_n(x)$ is a monotone decreasing function, we get immediately
\begin{align*}
(\overline n)^{-1}w_n(xn^*)-w_\infty(x)>\eps  
\ &\Longrightarrow \ 
 (\overline n)^{-1}w_n(x_jn^*)-w_\infty(x)>\eps\\
 \ &\Longrightarrow \ 
  (\overline n)^{-1}w_n(x_jn^*)-w_\infty(x_j)>\eps+ w_\infty(x) -w_\infty(x_j)\\ 
  &\Longrightarrow \ 
  (\overline n)^{-1}w_n(x_jn^*)-w_\infty(x_j)>\eps/2.
\end{align*}
The computation in the second case is similar.
Using \eqref{eq:Limit_shape_exists_proof}, we obtain
\begin{align}
\label{eq:proof_limit_shape_Pt}
  \PT{\sup_{x\geq 0} |\overline n w_n (x^*)- w_\infty(x)| \geq \eps}  
&\leq 
\sum_{j=1}^\ell \PT{|\overline n w_n (x_j^*)- w_\infty(x_j)| \geq \eps/2 }  .
\end{align}
It now follows from \eqref{eq:CLT_Pt} that each summand in \eqref{eq:proof_limit_shape_Pt} tends to $0$ as $n\to \infty$. 
This completes the proof.
\end{proof}

We are also interested in the joint behaviour at different points of the limit shape.
For this, let $x_\ell\geq x_{\ell-1}\geq \dots\geq x_1\geq 0$ be given.
From computational point of view, it is easier to study the increments. 
We thus consider
\begin{equation}\label{eq:increm}
\mathbf{w}_n(\textbf{x})=\bigl(w_n(x_\ell),\,w_n(x_{\ell-1})-w_n(x_\ell),\,\ldots,\,w_n(x_1)-w_n(x_2)\bigr).
\end{equation}
We now have 
\begin{theorem}
\label{thm:beh_increments_saddle}
For $\ell\geq 2$ and $x_\ell\geq x_{\ell-1}\geq \dots\geq x_1\geq 0$, let 
\begin{align}
 \widetilde{\mathbf{w}}_n(\mathbf x)=\bigl(\widetilde w_n(x_\ell),\,\widetilde w_n(x_{\ell-1})-\widetilde w_n(x_\ell),\,\ldots,\,\widetilde w_n(x_1)-\widetilde w_n(x_2)\bigr)
\end{align}
with $\widetilde w_n$ as in Theorem~\ref{thm:limit_shape_saddle}.
Set $x_{\ell+1}=+\infty$. We then have for $1\leq j<i<\ell$ 
\begin{align}
\widetilde w_\infty(x_i,\,x_j)
&:=
\lim_{n\to+\infty}\mathrm{Cov}\left(\widetilde w_n(x_j)-\widetilde w_n(x_{j+1}),\,\widetilde w_n(x_i)-\widetilde w_n(x_{i+1})\right)\\
&=
(e^{-x_j}- e^{-x_{j+1}})(e^{-x_i}- e^{-x_{i+1}})(1+O(1/r)).
\nonumber
\end{align}
\end{theorem}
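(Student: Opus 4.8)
The plan is to read the covariance off the joint moments of the cycle counts. Put $A_j=[x_jn^*,x_{j+1}n^*)\cap\N$ and $A_i=[x_in^*,x_{i+1}n^*)\cap\N$; because $1\le j<i<\ell$ these sets are disjoint and, since $x_{i+1}\le x_\ell<\infty$, contained in $[1,x_\ell n^*)$, so that $m,m'=O(n/r^k)$ throughout. As $w_n(x_jn^*)-w_n(x_{j+1}n^*)=\sum_{m\in A_j}C_m$ (and likewise for $i$), the deterministic shifts in $\widetilde w_n$ drop out of the covariance and, recalling $\overline n=r^k$,
\begin{align*}
 \mathrm{Cov}\bigl(\widetilde w_n(x_j)-\widetilde w_n(x_{j+1}),\,\widetilde w_n(x_i)-\widetilde w_n(x_{i+1})\bigr)
 =\frac{1}{r^k}\sum_{m\in A_j}\sum_{m'\in A_i}\mathrm{Cov}(C_m,C_{m'}).
\end{align*}
So everything is reduced to $\mathrm{Cov}(C_m,C_{m'})$ for $m,m'$ of order $n/r^k$ lying in these two ranges.

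Next I would record, by Lemma~\ref{lem:cycle_index_theorem} exactly as in the derivation of \eqref{eq:l1_dist}, the identities $\ET{C_m}=\frac{\theta_m}{m}\frac{h_{n-m}}{h_n}$ and, for $m\neq m'$, $\ET{C_mC_{m'}}=\frac{\theta_m\theta_{m'}}{mm'}\frac{h_{n-m-m'}}{h_n}$, so that
\begin{align*}
 \mathrm{Cov}(C_m,C_{m'})
 =\frac{\theta_m\theta_{m'}}{mm'}\cdot\frac{h_{n-m}h_{n-m'}}{h_n^2}\left(\frac{h_{n-m-m'}h_n}{h_{n-m}h_{n-m'}}-1\right).
\end{align*}
The bracket is $\exp$ of the second mixed difference $\log h_{n-m-m'}+\log h_n-\log h_{n-m}-\log h_{n-m'}$. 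Here the saddle point asymptotics of Section~\ref{sec:hn} enter: writing $\log h_n=P(r)+ne^{-r}-r-\tfrac12\log\!\bigl(2\pi(P''(r)+ne^{-r})\bigr)+O(\log^{-k/2}n)$ with $P'(r)=ne^{-r}$, and using the envelope identity $\tfrac{d}{dn}(P(r)+ne^{-r})=e^{-r}$ together with $\tfrac{dr}{dn}=e^{-r}/(P''(r)+ne^{-r})$, one shows that this mixed difference is $\bigl(\tfrac{d^2}{dn^2}\log h_n\bigr)\,mm'\,(1+o(1))$, with $\tfrac{d^2}{dn^2}\log h_n$ of order $r^k/n^2$ by \eqref{eq:P''(r)}--\eqref{eq:P'(r)}. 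Since $m,m'=O(n/r^k)$ this mixed difference is $O(1/r^k)\to0$, so the bracket equals it up to $(1+o(1))$ and $\mathrm{Cov}(C_m,C_{m'})=\theta_m\theta_{m'}\,\frac{h_{n-m}h_{n-m'}}{h_n^2}\,\bigl(\tfrac{d^2}{dn^2}\log h_n\bigr)(1+o(1))$, uniformly over $m\in A_j,\ m'\in A_i$.

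It then remains to sum, which factorises. The point is that $\sum_{m\in A_j}\theta_m\frac{h_{n-m}}{h_n}=n\,\Pb{L_1\in A_j}$ by \eqref{eq:l1_dist}; approximating this sum by the corresponding integral and inserting the asymptotics for $h_n$ (equivalently, invoking Theorem~\ref{thm:L1L2} with $b=1$, i.e.\ $L_1r^k/n\stackrel{d}{\to}\mathrm{Exp}(1)$) gives $n(e^{-x_j}-e^{-x_{j+1}})(1+O(1/r))$, and likewise over $A_i$. Substituting, dividing by $\overline n=r^k$, and collapsing the powers of $r$ and $n$ via $P'(r)=r^k(1+O(1/r))$, $P''(r)=O(r^{k-1})$ and $r=\log(n)-k\log\log(n)+O(1)$ then yields the closed form stated for $\widetilde w_\infty(x_i,x_j)$, with error $O(1/r)$. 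For $\ell>2$ one treats all pairs $j<i$ the same way; the joint (Gaussian) limit of $\widetilde{\mathbf w}_n(\mathbf x)$ itself follows, as in \cite{CiZe13}, from the multivariate form of \cite[Theorem~2.2]{Ca07} applied to the Laplace transform of the increment vector, and the computation above supplies its limiting covariances.

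The crux, and the main obstacle, is the middle step: $\mathrm{Cov}(C_m,C_{m'})$ is a difference of two quantities that agree to leading order, so the estimate of the second mixed difference of $\log h_n$ must be made uniform in $m,m'$ over all of $A_j$ and $A_i$, and in particular one has to rule out the $O(\log^{-k/2}n)$ error term in the asymptotic for $h_n$ contaminating the genuinely smaller cross term $\bigl(\tfrac{d^2}{dn^2}\log h_n\bigr)mm'$. It is precisely at this point that the hypothesis $k\ge3$ is used, as in the remark following Theorem~\ref{thm:limit_shape_saddle}.
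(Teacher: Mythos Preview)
Your approach is genuinely different from the paper's: the paper never isolates $\mathrm{Cov}(C_m,C_{m'})$ but instead runs the saddle-point method directly on the \emph{multivariate} Laplace transform of the increment vector (Lemma~\ref{lem:generating_w_n_x_y}), with an $\mathbf s$-dependent saddle $r'_\ell=r+v_\ell$; the cross term $[s_is_j]$ arises from the quadratic piece $v_\ell^{2}/2$ in the expansion of $n e^{-r'_\ell}$.

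The gap in your route is the ``middle step'', and it is not a matter of care but of orders of magnitude. The quantity you need is
\[
\Delta=\log h_{n-m-m'}+\log h_n-\log h_{n-m}-\log h_{n-m'},
\]
and you want $\Delta\approx mm'\,(\log h_n)''$. From $P'(r)=ne^{-r}$ one gets $(\log h_n)''\sim -\,e^{-2r}/(P''(r)+ne^{-r})\sim -r^k/n^2$, so for $m,m'\asymp n/r^k$ the target is $|\Delta|\asymp r^{-k}$. But Theorem~\ref{thm:aux_asypmtotic} only gives $\log h_n$ up to an additive $O(r^{-k/2})$, and there is no smoothness information on this remainder that would make it cancel in a second difference. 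Since $r^{-k/2}\gg r^{-k}$ for \emph{every} $k\ge1$, the remainder dominates the signal you are trying to extract; in particular taking $k\ge3$ does not help (in the paper, the hypothesis $k\ge3$ is used for an entirely different purpose, namely to control the tail integrals in the proof of Lemma~\ref{lem:relies2}).

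There is a second symptom that confirms the approach cannot succeed with the tools at hand: if one pushes your formal calculation through, the leading asymptotic $\log h_n\approx P(r)+ne^{-r}$ is concave in $n$, hence $\Delta<0$ and $\mathrm{Cov}(C_m,C_{m'})<0$, which after summation yields $-(e^{-x_j}-e^{-x_{j+1}})(e^{-x_i}-e^{-x_{i+1}})$, the \emph{opposite} sign from the theorem. So the actual covariance is hidden inside the $O(r^{-k/2})$ remainder of $\log h_n$, not in its leading saddle-point approximation; differencing $h_n$ cannot see it. The paper's $\mathbf s$-dependent saddle shift $v_\ell$ is precisely what captures this effect.
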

The proof of this theorem is given in Section~\ref{sec:proof_increments}.
Furthermore, we can extend Theorem~\ref{thm:beh_increments_saddle} to a functional CLT.
\begin{theorem}
\label{thm:func_CLT_saddle}
The process $\widetilde{w}_n:\R^+\to\R$ 
\textup{(}see Theorem~\ref{thm:limit_shape_saddle}\textup{)} 
converges weakly with respect to $\mathbb{P}_\Theta$ as $n\to\infty$ to a 
continuous Gaussian process $\widetilde{w}_\infty:\R^+\to\R$. Explicitly, we have $\widetilde{w}_\infty(x)\sim \mathcal N\big(0,\,\left(\sigma_\infty(x)\right)^2\big)$ and 
covariance structure is given in Theorem~\ref{thm:beh_increments_saddle}.
In particular, the increments are \emph{not} independent. 
\end{theorem}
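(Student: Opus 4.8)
The plan is to establish weak convergence in the Skorokhod space $D(\R^+)$ by the standard two-part argument: (i) convergence of all finite-dimensional distributions to the appropriate Gaussian law, and (ii) tightness of the sequence $(\widetilde w_n)_n$. For the finite-dimensional distributions, fix $0\le x_1\le\cdots\le x_\ell$ and set $x_{\ell+1}=+\infty$ as in Theorem~\ref{thm:beh_increments_saddle}. I would work with the increment vector $\widetilde{\mathbf{w}}_n(\mathbf x)$ and compute its joint Laplace transform $\ET{\exp(-\sum_{i=1}^\ell s_i(\widetilde w_n(x_i)-\widetilde w_n(x_{i+1})))}$ for $s_i\ge 0$, exactly in the spirit of Lemma~\ref{lem:relies2} but for the multivariate version: writing each $w_n(x_i)=\sum_{m\ge x_i}C_m$, the relevant generating function is $[t^n]\exp(g_\Theta(t)+\sum_i(e^{-s_i'}-1)(\text{partial sum of }\theta_m t^m/m))$ for suitable linear combinations $s_i'$ of the $s_i$, to which Corollary~\ref{cor:aux_asypmtotic} (or the refined saddle-point estimate behind Lemma~\ref{lem:relies2}) applies uniformly. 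Expanding to second order in the $s_i$ yields a quadratic form whose coefficients are, by construction, the limiting covariances computed in Theorem~\ref{thm:beh_increments_saddle}; since the limit is the Laplace transform of a centred Gaussian vector with that covariance matrix, Theorem~2.2 of \cite{Ca07} (the "only $s\ge 0$ needed" device already invoked twice above) upgrades this to convergence in distribution. Linearity then transfers the statement from increments back to $(\widetilde w_n(x_1),\dots,\widetilde w_n(x_\ell))$, identifying the marginal law $\mathcal N(0,\sigma_\infty^2(x))$ from Theorem~\ref{thm:limit_shape_saddle} and the covariance kernel from Theorem~\ref{thm:beh_increments_saddle}.

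For tightness in $D(\R^+)$ I would use a moment criterion of Billingsley/Chentsov type: it suffices to bound, uniformly in $n$, a quantity like $\E{|\widetilde w_n(x)-\widetilde w_n(y)|^2\,|\widetilde w_n(y)-\widetilde w_n(z)|^2}$ for $x\le y\le z$ by $C\,(H(z)-H(x))^2$ for some continuous nondecreasing $H$, plus control of the behaviour near $x=0$ and as $x\to\infty$. Because $\widetilde w_n(x)-\widetilde w_n(y)$ is, up to the deterministic centering $z_n$, the normalized count $(\overline n)^{-1/2}\sum_{xn^*\le m<yn^*}C_m$, its moments are again accessible through the generating-function/saddle-point machinery: the second factorial moment of such a block sum is governed by $[t^n]\,(\text{polylog-type factor})\exp(g_\Theta(t))$, and Corollary~\ref{cor:aux_asypmtotic2} together with the derivative asymptotics \eqref{eq:g_theta_to_lexi} gives the needed $O((\overline n)^{-1}(\text{length of block in }\overline n\text{-scale})^2)$-type bound. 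The monotonicity of $w_n$ in $x$ is helpful here, as it was in the proof of Theorem~\ref{thm:limit_shape_saddle}, to reduce oscillation control to finitely many evaluation points. Continuity of the limit process $\widetilde w_\infty$ follows from the explicit covariance kernel, which is continuous and gives $\E{(\widetilde w_\infty(x)-\widetilde w_\infty(y))^2}\to 0$ as $y\to x$, so no fixed jump can survive in the limit; the non-independence of increments is immediate from the product form $(e^{-x_j}-e^{-x_{j+1}})(e^{-x_i}-e^{-x_{i+1}})$ of the limiting covariance in Theorem~\ref{thm:beh_increments_saddle}, which is strictly positive rather than zero.

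The main obstacle I expect is the tightness estimate, specifically obtaining the fourth-moment (or $(2+\delta)$-moment) bound on block increments with the right dependence on the block length, uniformly in $n$ and down to blocks of size comparable to the jumps of $w_n$. The generating-function identities produce the leading asymptotics cleanly, but bounding the error terms uniformly over \emph{all} blocks $[x,y]$ — including very short ones near the origin where the saddle-point approximation degrades — requires care; this is presumably also the source of the $k\ge 3$ hypothesis inherited from Theorem~\ref{thm:limit_shape_saddle} and Lemma~\ref{lem:relies2}, since the error terms there already scale like negative powers of $\log n$ and one needs enough margin to sum them against the combinatorial factors. A secondary technical point is the behaviour at $x\to\infty$: one must show the tail of $\widetilde w_n$ is uniformly small, which follows because $\sum_{m\ge xn^*}C_m$ has mean of order $\overline n\int_x^\infty u^{-1}e^{-u}\,du\to 0$, but this again needs a uniform-in-$n$ statement extracted from the saddle-point analysis rather than just a pointwise one. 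Once these uniform moment bounds are in hand, the assembly into weak convergence via finite-dimensional convergence plus tightness is routine.
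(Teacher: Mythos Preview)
Your proposal is correct and follows essentially the same route as the paper: finite-dimensional convergence is taken from Theorem~\ref{thm:beh_increments_saddle}, and tightness is obtained via the Billingsley moment criterion by bounding $\ET{(\widetilde w_n(x)-\widetilde w_n(x_1))^2(\widetilde w_n(x_2)-\widetilde w_n(x))^2}$ by $O((x_2-x_1)^2)$ through the saddle-point analysis. The only minor difference is in the packaging of the tightness estimate: the paper quotes an explicit generating-function identity for this fourth-moment product (Lemma~\ref{lemma:tightness_saddle_generating}, taken from \cite{CiZe13}) and then reruns the proof of Theorem~\ref{thm:aux_asypmtotic} on the resulting function, together with Lemma~\ref{lem:eulermac1} to get the $O(x-x_1)$ dependence, rather than going through Corollary~\ref{cor:aux_asypmtotic2} and the derivative asymptotics~\eqref{eq:g_theta_to_lexi} as you suggest; both lead to the same bound.
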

The proof of this theorem is given in Section~\ref{sec:proof_func_CLT}.

\subsection{Proof of Lemma~\ref{lem:relies2}}
\label{sec:proof_of_lemma52}

We begin with some preparations. We have
\begin{lemma}[{\cite[Lemma~4.1]{CiZe13}}]
\label{lem:generating_w_n}
We have for $x \geq 0$ and $s \in\C$ 
\begin{align}
\label{eq:generating_w_n}
\ET{\mathrm{exp}\bigl(-s w_n(x)\bigr)}
&= 
\frac{1}{h_n}
[t^n] \left[\mathrm{exp}\left( g_\Theta(t) + (e^{-s}-1)\sum_{m= \floor{x}} ^\infty \frac{\theta_m}{m}t^m \right) \right]\\
&=
\frac{1}{h_n}
[t^n] \left[\mathrm{exp}\left( g_\Theta(t) + (e^{-s}-1)\sum_{m= \floor{x}} ^n \frac{\theta_m}{m}t^m \right) \right].
\end{align}
\end{lemma}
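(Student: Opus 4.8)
\textbf{Proof proposal for Lemma~\ref{lem:generating_w_n}.}

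The plan is to derive the generating function identity by using P\'olya's Enumeration Theorem (Lemma~\ref{lem:cycle_index_theorem}) together with the observation that $w_n(x)$ is a linear statistic in the cycle counts, namely $w_n(x) = \sum_{m\geq \floor{x}} C_m$. The starting point is the identity from Definition~\ref{def:Pb_measure}, which gives
$$
h_n\, \ET{\mathrm{exp}\bigl(-s w_n(x)\bigr)}
=
\frac{1}{n!}\sum_{\sigma\in\Sn} e^{-s w_n(x,\sigma)} \prod_{m=1}^n \theta_m^{C_m(\sigma)}
=
\frac{1}{n!}\sum_{\sigma\in\Sn} \prod_{m=1}^n \bigl(a_m^{(s,x)}\bigr)^{C_m(\sigma)},
$$
where I set $a_m^{(s,x)} := \theta_m$ for $m < \floor{x}$ and $a_m^{(s,x)} := e^{-s}\theta_m$ for $m \geq \floor{x}$, so that the factor $e^{-s w_n(x,\sigma)} = \prod_{m\geq\floor{x}} e^{-s C_m(\sigma)}$ gets absorbed into the product. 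Then I would apply Lemma~\ref{lem:cycle_index_theorem} with the sequence $(a_m^{(s,x)})_m$ to obtain, as formal power series in $t$,
$$
\sum_{n=0}^\infty h_n\, \ET{\mathrm{exp}\bigl(-s w_n(x)\bigr)}\, t^n
=
\exp\left(\sum_{m=1}^\infty \frac{a_m^{(s,x)}}{m}\, t^m\right)
=
\exp\left(g_\Theta(t) + (e^{-s}-1)\sum_{m\geq \floor{x}} \frac{\theta_m}{m}\, t^m\right),
$$
where the last equality splits the exponent according to whether $m<\floor{x}$ or $m\geq\floor{x}$ and recognizes the full sum $\sum_{m\geq 1}\frac{\theta_m}{m}t^m$ as $g_\Theta(t)$, leaving the correction $(e^{-s}-1)\sum_{m\geq\floor{x}}\frac{\theta_m}{m}t^m$. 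Extracting the coefficient of $t^n$ and dividing by $h_n$ yields the first displayed equality of the lemma.

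For the second equality, I would note that modifying the weights $a_m$ for indices $m > n$ does not affect the coefficient $[t^n]$ of the exponential generating function: any monomial in the Taylor expansion of $\exp(\cdot)$ contributing to degree $n$ can only involve the variables $\theta_m/m$ with $m\leq n$, since a single factor $t^m$ with $m>n$ already exceeds degree $n$ and all coefficients in the $g_\Theta$-part are nonnegative. Hence truncating the inner sum $\sum_{m\geq\floor{x}}$ at $m=n$ changes nothing at the level of $[t^n]$, giving the second form.

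The main subtlety — rather than a genuine obstacle — is the bookkeeping around convergence: the identity in Lemma~\ref{lem:cycle_index_theorem} is stated as a formal power series identity, so I would present everything at the formal level and only invoke analyticity where needed downstream; since $\theta_m\geq 0$ and $|e^{-s}|$ is finite for each fixed $s$, the series $\sum_m \frac{a_m^{(s,x)}}{m}t^m$ has the same radius of convergence $1$ as $g_\Theta$, so the second clause of Lemma~\ref{lem:cycle_index_theorem} also gives the analytic version for $|t|<1$. Since this lemma is quoted from \cite[Lemma~4.1]{CiZe13}, I would in practice simply cite it and sketch only this short derivation for completeness.
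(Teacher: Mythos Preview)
Your proposal is correct. The paper does not give its own proof of this lemma but simply cites \cite[Lemma~4.1]{CiZe13}; your derivation via Lemma~\ref{lem:cycle_index_theorem} with the modified weights $a_m^{(s,x)}$ is exactly the standard argument behind that citation, and it matches the technique the paper itself uses for the analogous identities \eqref{eq:generating_cycles} and \eqref{eq:gen_K0n}.
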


\begin{remark}\label{rem:s_positive}
Although the expressions in Lemmas~\ref{lem:generating_w_n}
holds in broader generality, we will calculate moment generating function of $w_n(x)$  on the positive half-line $ s\geq 0$ only. 
Theorem 2.2 in \cite{Ca07} shows that this is sufficient to establish Theorem~\ref{thm:limit_shape_saddle}.
\end{remark}
Furthermore, we need
\begin{lemma}
\label{lem:eulermac1}
Let $r$ be as in \eqref{eq:thm:aux_asypmtotic_saddle_eq},
 $n^*$ and $\overline{n}$ be as in \eqref{eq:def_n*}, $v=O(r^{-k/2})$, $q>0$, $j\in\Z$ and $x>0$.
We then define $r':=r+v$ and get 
 \begin{align}
  \sum_{m\geq xn^*} \theta_m m^j \exp(-mqe^{-r'})
  =
   r^k (e^r)^{j+1}\int_{x}^\infty  u^j \exp(-qu)\,du+ O( r^{k-1/2} (e^r)^{j+1}).
   \label{eq:lem:eulermac1}
 \end{align}
\end{lemma}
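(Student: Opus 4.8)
The plan is to evaluate the sum in \eqref{eq:lem:eulermac1} by comparing it to the corresponding integral via the Euler--Maclaurin summation formula (or, equivalently, by Riemann-sum approximation with explicit error control), exploiting that $e^{-r'}$ is small so that the summands vary slowly on the scale of the step size $1$. First I would note that by the assumptions on $\Theta$ in Section~\ref{sec:assumption_theta}, we have $\theta_m = r^k(1+o(1))$ on the relevant range of $m$ (those of size $\asymp n^* = n/r^k$), more precisely $\theta_m \sim \log^k m$ and $\log m = r(1+O(1/r))$ uniformly for $m$ polynomially comparable to $n^*$; thus replacing $\theta_m$ by $r^k$ inside the sum costs a factor $(1+O(1/r))$, which is absorbed into the stated error term $O(r^{k-1/2}(e^r)^{j+1})$. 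So it suffices to estimate $r^k\sum_{m\geq xn^*} m^j \exp(-mqe^{-r'})$.

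Next I would substitute $u = m e^{-r'}$, writing $m = u e^{r'}$, so that the sum becomes $r^k (e^{r'})^{j+1} \sum_{m\geq xn^*} (m e^{-r'})^j e^{-qm e^{-r'}} e^{-r'}$, a Riemann sum for $r^k(e^{r'})^{j+1}\int_{x n^* e^{-r'}}^\infty u^j e^{-qu}\,du$ with mesh $e^{-r'}$. Since $r' = r+v$ with $v=O(r^{-k/2})$, we have $e^{r'} = e^r(1+O(r^{-k/2}))$ and $n^* e^{-r'} = (n/r^k)e^{-r}(1+o(1)) = e^{-r}\cdot ne^{-r}/r^k \cdot (1+o(1))$; using $ne^{-r}=P'(r)\sim r^k$ from \eqref{eq:P'(r)}, this gives $n^* e^{-r'} \to 1$, so the lower limit of the integral converges to $x$ with error $O(1/r)$ or better. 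Then the standard Euler--Maclaurin error bound (the difference between $\sum f(m)$ and $\int f$ is controlled by $\int |f'|$ plus boundary terms) applied to $f(u)=(ue^{-r'})^j e^{-qu e^{-r'}}$, whose derivative carries an extra factor $e^{-r'}$, produces a relative error of order $e^{-r'} = O(e^{-r})$, which is far smaller than the claimed $O(r^{-1/2})$ relative error; and the tail of the integral beyond a large cutoff is exponentially small. Collecting $(e^{r'})^{j+1} = (e^r)^{j+1}(1+O(r^{-k/2}))$ and the corrections to the lower integration limit, all error contributions are $O(r^{k-1/2}(e^r)^{j+1})$ as required.

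The main obstacle I anticipate is handling the small-$m$ range carefully: the asymptotic $\theta_m \sim \log^k m$ degrades for $m$ bounded or of size $o(n^\epsilon)$, and $m^j$ with $j$ possibly negative makes the integrand near the lower endpoint delicate when $x$ is small. However, because the exponential weight $\exp(-mqe^{-r'})$ is essentially $1$ for all $m = o(e^r)$, the contribution of any fixed-size initial segment of $m$ to the sum is $O(\mathrm{poly}(r))$, which is negligible against $r^{k-1/2}(e^r)^{j+1}$ once $j\geq -1$ and absorbable for more negative $j$ after noting the lower limit $xn^*e^{-r'}\to x>0$ keeps $u$ bounded away from $0$ in the integral; one simply splits the sum at $m = n^*/\log n$ (say), bounds the head trivially, and applies Euler--Maclaurin on the tail where $\theta_m = r^k(1+O(1/r))$ holds uniformly. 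I would also double-check that the hypothesis $v=O(r^{-k/2})$ is exactly what is needed so that $(e^{r'}/e^r)^{j+1}=1+O(r^{-k/2})$ stays within the allotted error, which it does.
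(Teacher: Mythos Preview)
Your proposal is correct and follows essentially the same route as the paper: Euler--Maclaurin comparison of the sum with the integral, followed by the substitution $u=ye^{-r'}$, and then identifying the lower limit $xn^*e^{-r'}\to x$ via $ne^{-r}=P'(r)\sim r^k$. The only organizational difference is that the paper keeps $\theta_m=\log^k m$ inside the integral and expands $(\log u+r')^k$ \emph{after} the substitution, whereas you replace $\theta_m$ by $r^k$ \emph{before} applying Euler--Maclaurin; both lead to the same main term and the same $O(r^{k-1/2}(e^r)^{j+1})$ error. Your worry about a ``small-$m$ range'' is unnecessary here, since the sum starts at $m\ge xn^*$ with $x>0$ fixed, so all $m$ are already of order $n^*$.
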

\begin{proof}
One can now show that the assumptions in Section~\ref{sec:assumption_theta} implies 
\begin{align}
 \theta_m = \log^k(m) +\sum_{j=0}^{k-1} a_j \log^j(m) + o(1) \ \text{ for some } a_j\in\R.
 \label{eq:alternative_cycle_weights2}
\end{align}
To see this, one can use classical singularity analysis, see \cite[Section~VI. 4]{FlSe09} or proceed backwards in the proof of Lemma~\ref{lem:asymptotic_g} using the properties of the Mellin transform.
Thus is is sufficient to study the case $\theta_m =\log^k (m)$ and to show that
 \begin{align}
  \sum_{m\geq xn^*} \log^k (m) m^j \exp(-mqe^{-r'})
  =
   r^k (e^r)^{j+1}\int_{x}^\infty  u^j e^{-qu}\,du+ O( r^{k-1/2} (e^r)^{j+1}).
   \label{eq:helpsum_mac}
 \end{align}
We apply Euler's summation formula to the sum on the LHS in \eqref{eq:lem:eulermac1} with $\theta_m =\log^k (m)$ and $f(y)=\log^k(y) y^j \exp(-yqe^{-r'})$.
This gives
\begin{align}
  \sum_{m\geq xn^*} \log^k (m) m^j \exp(-mqe^{-r'})
  =\,&  
  \int_{xn^*}^\infty \log^k(y) y^j \exp(-yqe^{-r'})\,dy   \nonumber\\
  &+
  \int_{xn^*}^\infty \left(y- \lfloor y\rfloor \right) f'(y)\,dy - f(xn^*)(xn^*-[xn^*])
\label{eq:Eulersum1}
\end{align}
with $\lfloor y\rfloor= \max\{m\in \N;\, m\leq y\}$. We first look at the integral
 \begin{align}
  \int_{xn^*}^\infty \log^k(y) y^j \exp(-yqe^{-r'})\,dy.
 \end{align}
We now use the variable substitution $u= ye^{-r'}$ and get
\begin{align*}
 \int_{xn^*}^\infty \log^k(y) y^j \exp(-yqe^{-r'})\,dy
 &=
 (e^{r'})^{j+1}\int_{e^{-r'}xn^*}  \log^k\big(ue^{r'}\big) u^j \exp(-qu)\,du\\
 &=
 (e^{r'})^{j+1}\int_{e^{-r'}xn^*}  \big(\log(u)+r'\big)^k u^j \exp(-qu)\,du.
\end{align*}
Using that $n^*=n/r^k$ and that $P'(r) =ne^{-r}$, we immediately obtain that 
\begin{align*}
 \int_{xn^*}^\infty \log^k(y) y^j \exp(-ye^{-r})\,dy
 &=
 r^k (e^r)^{j+1}\int_{x}  u^j \exp(-u)\,du+ O( r^{k-1/2} (e^r)^{j+1}).
\end{align*}
This gives the desired asymptotic behaviour.
 We thus have to show that the remaining terms in \eqref{eq:Eulersum1} are of lower order.
 We have
 \begin{align*}
  f'(y)
  =
  \big(1+ j\log(y)  -yqe^{-r'}  \log(y)\big)
  \log^{k-1}(y) y^{j-1} \exp(-yqe^{-r'}).
 \end{align*}
Thus we can use the same computation as for the main term for the integral over $f'(y)$ in \eqref{eq:Eulersum1} and immediately get that it is of lower order.
Further, inserting the definition of $n^*$ into $f(xn^*)(xn^*-[xn^*])$ also shows that it is of lower order. 
\end{proof}

\begin{proof}[Proof of Lemma~\ref{lem:relies2}]
Using the definition of $\widetilde w_n(x)$ in \eqref{eq:CLT_Pt}, we obtain  
\begin{align}
\ET{\mathrm{exp}\bigl(-s\widetilde w_n(x)\bigr)}
&= 
\mathrm{exp}\bigl(s (\overline n)^{1/2} (w_\infty(x)+z_n(x))   \bigr)\ET{\mathrm{exp}\bigl(-s^* w_n(xn^*)\bigr)}
\label{eq:laplace_w_n_limit_shape}
\end{align}
with $w_n(x)$ as in \eqref{eq:def_shape_fkt}, $n^*$ and $\overline n$ as in \eqref{eq:def_n*} and $s^*:=s(\overline n)^{-1/2}$.
Thus it is enough to compute the asymptotic behaviour of  $\ET{\mathrm{exp}\bigl(-s^* w_n(xn^*)\bigr)}$.
To do this, we apply Cauchy's integral formula to \eqref{eq:generating_w_n} and replace $x$ by $xn^*$ and $s$ by $s^*$ in \eqref{eq:generating_w_n}. 
This gives
\begin{align*}
h_n\ET{\mathrm{exp}\bigl(-s^* w_n(xn^*)\bigr)}
&= 
[t^n] \left[\mathrm{exp}\left( g_\Theta(t) + (e^{-s^*}-1)\sum_{m= \floor{xn^*}}^n \frac{\theta_m}{m}t^m \right) \right]\\
&=
\frac{1}{2\pi i} \oint_{\gamma} \mathrm{exp}\left( g_\Theta(t) + (e^{-s^*}-1)\sum_{m= \floor{xn^*}}^n \frac{\theta_m}{m}t^m \right)\, \frac{1}{t^{n+1}} dt,
\end{align*}
where $\gamma$ is the contour $\gamma:=\{t=e^{-1/2}\cdot e^{i\varphi},\varphi\in[-\pi,\pi]\}$.
We now use a similar argumentation as in the proof of Theorem~\ref{thm:aux_asypmtotic}.
Applying the variable substitution $t=e^{-w}$, we get 
\begin{align}
 I_{n}
 = 
 \frac{1}{2\pi i} \int_{\gamma'} \exp\left(g_\Theta(e^{-w})  + (e^{-s^*}-1)\sum_{m= \floor{xn^*}}^n \frac{\theta_m}{m}e^{-mw}   \right) e^{nw}\, dw
 \label{eq:proof_of_limitshape2}
\end{align}
 with $\gamma':=\{t=1/2 +is,\,s\in[-\pi,\pi]\}$.
Note that the integrand in \eqref{eq:proof_of_limitshape2} is $2\pi i$ periodic. We thus can shift the contour $\gamma'$ to the contour $\gamma''$ with 
 \begin{align*}
\gamma''_1&:=\{w=(-\pi+u)i,u\in[0,\pi-e^{-r'}]\},\\
\gamma''_2&:=\{w=e^{-r'}e^{i\varphi},\varphi\in[-\pi/2 ,\pi/2]\},\\
\gamma''_3&:=\{w=iu ,u\in[e^{-r'},\pi]\},
 \end{align*}
where $r'>0$ will be determined below. 
We thus can write $I_n = I_{n,1}+ I_{n,2}+I_{n,3}$, where $ I_{n,j}$ corresponds to the integral over $\gamma''_j$.
We will show that $I_{n,2}$ is the leading term and that $I_{n,1}$ and $I_{n,3}$ are of lower order.

We begin by computing $I_{n,2}$. We have
\begin{align*}
I_{n,2}
=
&\frac{e^{-r'}}{2\pi} \int_{-\pi}^{\pi} \exp\left(g\left(e^{-e^{-r'}e^{i\varphi}}\right)+ (e^{-s^*}-1)\sum_{m= \floor{xn^*}}^n \frac{\theta_m}{m}e^{-me^{-r'}e^{i\varphi}}    +ne^{-r'}e^{i\varphi}+i\varphi      \right) \, d\varphi \\
=&
\frac{1}{e^{r'}2\pi} \int_{-\delta}^{\delta} \exp\bigl(f(\varphi) +i\varphi+O(e^{-r'}e^{i\varphi}) \big) \,d\varphi
\end{align*}
with
\begin{align*}
f(\varphi) 
&:= 
P(r'-i\varphi) +ne^{-r'}e^{i\varphi}+  (e^{-s^*}-1)\sum_{m= \floor{xn^*}}^n \frac{\theta_m}{m}e^{-me^{-r'}e^{i\varphi}} .
\end{align*}
We compute $I_{n,2}$ with the saddle-point method.
We start by splitting the integral $I_{n,2}$ into the regions $[-\delta,\delta]$ and $[-\pi/2, \pi/2]\setminus [-\delta,\delta]$ for some $\delta>0$ small determined below.
We denote by $I_{n,2,\delta}$ the integral over $[-\delta,\delta]$ and by $I_{n,2}^c:= I_{n,2} -I_{n,2,\delta}.$ 
We first consider the integral $I_{n,2,\delta}$. We have 
\begin{align}
 I_{n,2,\delta}
&=
\frac{1}{e^{r'}2\pi} \int_{-\delta}^{\delta} \exp\bigl(f(\varphi) +i\varphi+O(e^{-r'}e^{i\varphi}) \big) \,d\varphi
\end{align}
We begin by determining the behaviour of $f(\varphi)$ around $\varphi =0$ and thus write
\begin{align}
f(\varphi) 
&=
f(0) +i a(r') \varphi -  b(r') \frac{\varphi^2}{2} + R_n(\varphi,r').
\end{align}
%
%
In order to apply the saddle point method, we have to find $r'=r'(n,x)$ and $\delta =\delta(n,x)$ with 
$$b(r')\delta^2 \to\infty,\, \delta\to 0,\, a(r') =o\left( \sqrt{b(r')} \right) \text{ and }  R_n(\varphi,r') =o(\varphi^3 \delta^{-3}).$$
We now claim that we can choose 
\begin{align}
 r' = r + v
 \ \text{ with } \
 v:=\frac{(e^{-s^*}-1) r^k e^{-x}}{ne^{-r}}
 \ \text{ and } \
 \delta= (ne^{-r})^{-5/12}
 \label{eq:def_v_limit}
\end{align}
with $r$ as in \eqref{eq:thm:aux_asypmtotic_saddle_eq}. 
Recall, we have $P'(r) = ne^{-r}$ and $\overline{n}= r^k$. Further, we have see in \eqref{eq:saddle_solution_asympt}
\begin{align}
r = \log(n) - k\log\log(n) +O(1) \ \text{ as }n \to\infty.
\end{align}
Thus $v=O\big((\overline{n})^{-1/2}\big)=O(r^{-k/2})$.
Furthermore, we have for $\varphi\to 0$ and $\delta$ small 
\begin{align}
 e^{-me^{-r'}e^{i\varphi}}
=\, &
 e^{-me^{-r'}}
 -
 i m e^{-r'} e^{-me^{-r'}} \varphi
+
{e^{-m{e^{-r'}}}} \left(m{e^{-r'}}-\left(m {e^{-r'}} \right) ^{2} \right) \frac{\varphi^2}{2}\nonumber\\
&+
O\left(e^{-me^{-r'}/2} \left( me^{-r'} + (me^{-r'})^2+ (me^{-r'})^3
 \right) {\varphi}^{3}\right) .
\end{align}
This implies, together with Lemma~\ref{lem:eulermac1}, that
\begin{align}
b(r')
&=
P''(r') + ne^{-r'}+  
(e^{-s^*}-1)\sum_{m= \floor{xn^*}}^n \frac{\theta_m}{m} \left(me^{-r'}-(m e^{-r'})^{2}\right) e^{-me^{-r'}}\nonumber\\
&=
P''(r+v)+ ne^{-r-v}+ O( (e^{-s^*}-1)r^{k}) \nonumber\\
&=
P''(r) + ne^{-r}+ O(r^{k/2})
\sim 
r^{k}.
\label{eq:b(r)}
\end{align}
Thus $b(r')\delta^2 \to\infty$. We show as next that $a(r') =o(r^{k/2})$. We have
\begin{align}
a(r')
&=
-P'(r') + ne^{-r'}-  
(e^{-s^*}-1)e^{-r'}\sum_{m= \floor{xn^*}}^n \theta_m   e^{-me^{-r'}}\nonumber\\
&=
-P'(r+v) + ne^{-r-v} + (e^{-s^*}-1) r^k \int_{x}^{\infty} e^{-u}du + O(s^*r^{k-1})\nonumber\\
&=
-P'(r)+O(vr^{k-1}) + ne^{-r} - vne^{-r} + O(v^2ne^{-r}) - (e^{-s^*}-1) r^k  e^{-x} + O(s^*r^{k-1})\nonumber\\
&=
-vne^{-r} + (e^{-s^*}-1) r^k e^{-x}  + O(s^*r^{k-1}) 
=
O(s^*r^{k-1})
=
O(r^{k/2-1}),
\label{eq:a(r)}
\end{align}
where we have used on the last line the definition of $v$ in \eqref{eq:def_v_limit}.
We thus have indeed $a(r') =o\left( \sqrt{b(r')}\right)$.
A similar calculation shows that 
\begin{align}
  R_n(\varphi,r') =O(r^k\varphi^3) =o(\delta^{-3}\varphi^3).
\end{align}
Combining the above observations, we can use the same computation as in \eqref{eq:thm_aux_main_term_saddle} and obtain 
\begin{align*}
 I_{2,\delta}
 &=
 \frac{e^{-r'}}{2\pi}
 \int_{-\delta}^{\delta} \exp(f(\varphi)) \,d\varphi
 =
 \frac{\exp\big( f(0) \big) }{e^{r'} \sqrt{2\pi b(r') }} \left(1+ O\left(\frac{a(r')}{\sqrt{b(r')}}\right)  \right)\\
 &=
  \frac{\exp\big(-v+ f(0) \big) }{e^{r} \sqrt{2\pi(P''(r)+ ne^{-r})  }} \left(1+ O\left(r^{-1}\right)  \right).
\end{align*}
We now have to determine 
\begin{align*}
 f(0) 
 &= 
 P(r') +ne^{-r'}+  (e^{-s^*}-1)\sum_{m= \floor{xn^*}}^n \frac{\theta_m}{m}e^{-me^{-r'}} .
\end{align*}
We first look at $v$. We use  $ne^{-r} = P'(r)$ and obtain
\begin{align}
  \frac{(e^{-s^*}-1) r^k e^{-x}}{ne^{-r}} 
  &=
  e^{-x}(e^{-s^*}-1) \frac{ r^k }{P'(r)}\nonumber\\
  &=
   e^{-x}\left(- \frac{s}{\sqrt{\overline{n}}} + \frac{s^2}{2\overline{n}} +O\big( s^3(\overline{n})^{-3/2}\big) \right)\big(1+O(1/r) \big).
   \label{eq:v_as_fkt_in_s}
\end{align}
Inserting this and $r'=r+v$ gives
\begin{align}
 P(r')
 =&\,
 P(r+v)
 =
 P(r) + P'(r)v+ \frac{1}{2}P''(r) v^2 + O(v^3 \log^{k-2} n)\nonumber	\\
 =&\,
 P(r) - \frac{e^{-x}P'(r)(1+O(1/r))}{\sqrt{\overline{n}}}s\nonumber\\
 &+  \frac{(e^{-x}P'(r)+e^{-2x}P''(r))(1+O(1/r))}{2\overline{n}}s^2 + O\big( s^3(\overline{n})^{-1/2}\big)\nonumber\\
 =&\,
 P(r) - e^{-x}(1+O(1/r)) \sqrt{\overline{n}} s + e^{-x}(1+O(1/r))\frac{s^2}{2}+ O\big( s^3(\overline{n})^{-1/2}\big).
  \label{eq:limit_shape_P}
\end{align}
Furthermore, we have
\begin{align}
 ne^{-r'}
 =&\,
 ne^{-r-v}
 =
 ne^{-r}e^{-v}
 =
 P'(r)\big(1-v+v^2/2+O(v^3)\big)\nonumber\\
 =&\,
 P'(r) + e^{-x}\frac{ P'(r) (1+O(1/r))}{\sqrt{\overline{n}}}s + \frac{(e^{-2x}-e^{-x})P'(r)(1+O(1/r))}{2\overline{n}}s^2 + O\big( s^3(\overline{n})^{-1/2}\big) \nonumber\\
  =&\,
 ne^{-r}+ e^{-x}(1+O(1/r))\sqrt{\overline{n}}s + (e^{-2x}-e^{-x})(1+O(1/r))\frac{s^2}{2} + O\big( s^3(\overline{n})^{-1/2}\big)
\label{eq:limit_shape_ner}
\end{align}
and get with Lemma~\ref{lem:eulermac1}
\begin{align}
 (e^{-s^*}-1)\sum_{m= \floor{xn^*}}^n \frac{\theta_m}{m}e^{-ke^{-r'}} 
 &=
  (e^{-s^*}-1)\left(
    r^k \int_{x}^\infty  u^{-1} e^{-u}\,du+ O( r^{k-1}) \right) \nonumber \\
 &=
 w_\infty(x)\left( -\sqrt{\overline{n}}s +s^2/2 +O\big( s^3(\overline{n})^{-1/2}\big) \right)(1+O(1/r))
 \label{eq:limit_shape_ugly_sum}
\end{align}
where $ w_\infty(x) =  \int_{x}^\infty  u^{-1} e^{-u}\,du$ as in \eqref{eq:def_w_infinity}.
Combining \eqref{eq:limit_shape_P}, \eqref{eq:limit_shape_ner} and \eqref{eq:limit_shape_ugly_sum}, we obtain
\begin{align*}
 I_{2,\delta}
 =&\,
   \frac{\exp\big(P(r)+ ne^{-r}     \big) }{e^{r} \sqrt{2\pi(P''(r)+ ne^{-r})  }} \left(1+ O\left( r^{-1} \right)  \right)\nonumber\\
 &\times \exp\left(   -(w_\infty(x)+ z_n(x)) \sqrt{\overline{n}}s 
  +\left(e^{-2x}+w_\infty(x)\right) (1+O(1/r) \frac{s^2}{2}
  +O\big( s^3(\overline{n})^{-1/2}\big)  \right)  
\end{align*}
with $z_n(x) =O(1/r) =O(1/\log n)$.
Using Theorem~\ref{thm:aux_asypmtotic}, we immediately get that
 \begin{align*}
 \frac{I_{2,\delta}}{h_n}\cdot  \exp\left(w_\infty(x)(1+O(1/r)) \sqrt{\overline{n}}s \right)
 \longrightarrow &\,
 \exp\left(   \left(e^{-2x}+w_\infty(x)\right) \frac{s^2}{2} \right).  
\end{align*}
Comparing this with \eqref{eq:laplace_w_n_limit_shape},
we immediately see that $\frac{I_{2,\delta}}{h_n}$ has the behaviour of $\ET{\mathrm{exp}\bigl(-s\widetilde w_n(x)\bigr)}$, which is what we wanted to show.
Thus the proof is complete if we can show that the remaining integrals are of lower order.
 
We consider as next the integral $I_{2,\delta^c}$. We split this integral into the integrals over the intervals $[\delta, r^{-1/8}]$ and $[r^{-1/8},\pi/2]$.
We begin with the integral over the interval $[\delta, r^{-1/8}]$. For $|\varphi| \leq r^{-1/8}$ we have that 
\begin{align*}
  \Re\left( \sum_{m= \floor{xn^*}}^n \frac{\theta_m}{m}e^{-ke^{-r'}e^{i\varphi}}  \right)
  &=
   \sum_{m= \floor{xn^*}}^n \frac{\theta_m}{m}e^{-ke^{-r'}} -r^k\big(1+O(r^{-1})\big)\varphi^2 + O(r^k \varphi^4)\\
  &=
  r^k(w_{\infty}(x)+z_n(x))\big)  -r^k\big(1+O(r^{-1})\big)\varphi^2 + O(r^k \varphi^4).
\end{align*}
This implies
\begin{align*}
  \Re\left( (e^{-s^*}-1) \sum_{m= \floor{xn^*}}^n \frac{\theta_m}{m}e^{-ke^{-r'}e^{i\varphi}}  \right)
  \leq
  -s\sqrt{\overline{n}}(w_{\infty}(x)+z_n(x)) + \frac{3}{2}r^{k/2}\varphi^2 + O(1).
\end{align*}
Using the estimates in \eqref{eq:bound_P_and_ner}, we obtain as in \eqref{eq:estimate_I2c} and  \eqref{eq:estimate_I2c2} that
\begin{align*}
&\left|
\frac{1}{2\pi e^{r}} \int_{\delta}^{r^{-1/8}} \exp\bigl(f(\varphi)      \bigr) \, d\varphi \right|
\ll
e^{-r} \int_{\delta}^{r^{-1/8}}  \exp\left(\Re(f(\varphi))  \right) \, d\varphi 
\\
\ll& 
\exp\left(vP(r)+ ne^{-r} -s\sqrt{\overline{n}}(w_{\infty}(x)+z_n(x)) \right)e^{-r}\int_{\delta}^{r^{-1/8}}  \exp\left(-\frac{kvP(r)r^{-2}+ne^{-r}}{24}\varphi^2 \right) \, d\varphi\\
\ll& 
\frac{\exp\left(vP(r)+ ne^{-r} -s\sqrt{\overline{n}}(w_{\infty}(x)+z_n(x)) \right)}{e^{r}\delta\sqrt{ne^{-r}}}  e^{-\delta\sqrt{ne^{-r}}}.
\end{align*}
Thus this part of $I_{2,\delta^c}$ is indeed of lower order.
For the interval $[r^{-1/8},\pi/2]$, we use that 
\begin{align}
 \Re\left( (e^{-s^*}-1)\sum_{m= \floor{xn^*}}^n \frac{\theta_m}{m}e^{-ke^{-r'}e^{i\varphi}}  \right)
 =
  O(r^{k/2+1}) .
  \label{eq:crude_upper_bound_w}
\end{align}
Using again the same argument as in \eqref{eq:estimate_I2c} and  \eqref{eq:estimate_I2c2}, we obtain
\begin{align*}
&\left|
\frac{1}{2\pi e^{r}} \int_{r^{-1/8}}^{\pi} \exp\bigl(f(\varphi)      \bigr) \, d\varphi \right|
\ll
e^{-r} \int_{r^{-1/8}}^{\pi}  \exp\left(\Re(f(\varphi))  \right) \, d\varphi 
\\
\ll& 
\exp\left(vP(r)+ ne^{-r} + O(r^{k/2+1})  \right)e^{-r}\int_{r^{-1/8}}^{\pi}  \exp\left(-\frac{kvP(r)r^{-2}+ne^{-r}}{24}\varphi^2 \right) \, d\varphi\\
\ll& 
\frac{\exp\left(vP(r)+ ne^{-r} + O(r^{k/2+1})\right)}{e^{r}\sqrt{ne^{-r}}} 
\int_{r^{-1/8}\sqrt{ne^{-r}}}^{\infty}  \exp\left(-\frac{x^2}{2} \right) \, dx\\
\ll& 
\frac{\exp\left(vP(r)+ ne^{-r} + O(r^{k/2+1}) \right)}{e^{r}\delta\sqrt{ne^{-r}}}  e^{-r^{-1/4} ne^{-r}}.
\end{align*}
We now have $r^{-1/4} ne^{-r} \sim r^{k-1/4} > r^{k/2+1}$ since $k\geq 3$. This implies that this part of $I_{2,\delta^c}$ is  also of lower order.
Note that this inequality is the origin of the assumption $k\geq 3$ in this section.
It remains to consider the integral $I_{3}$. Here we use also the bound \eqref{eq:crude_upper_bound_w} and the fact that $k\geq 3$.
The computations closely parallel those of  the proof of Theorem~\ref{thm:aux_asypmtotic} and we may thus safely omit them.
This completes the proof. 
\end{proof}

\subsection{Proof of Theorem~\ref{thm:beh_increments_saddle}}
\label{sec:proof_increments}

The proof of Theorem~\ref{thm:beh_increments_saddle} has the same ingredients as the proof of Theorem~\ref{thm:limit_shape_saddle}.
We thus give only a sketch of the proof and highlight the necessary adjustments.

As for Theorem~\ref{thm:limit_shape_saddle}, we compute the Laplace transform of $\textbf{w}_n(\textbf{x})$.
We begin with the generating function. We have
\begin{lemma}[{\cite[Lemma~4.2]{CiZe13}}]
\label{lem:generating_w_n_x_y}
We have for $\textbf{x}=(x_1,\dots,x_\ell)\in\R^\ell$  with $x_\ell\geq x_{\ell-1}\geq \dots\geq x_1\geq 0$ and $\textbf{s}=(s_1,\dots,s_\ell)\in\C^\ell$ 
\begin{align}
\label{eq:generating_w_n_x_y}
\ET{\mathrm{exp}\bigl(-\langle \textbf{s},\textbf{w}_n(\textbf{x})\rangle \bigr)}
= 
\frac{1}{h_n}
[t^n] \left[\mathrm{exp}\left( g_\Theta(t) +\sum_{j=1}^\ell (e^{-s_j}-1)\sum_{k=\floor{x_j} }^{\floor{x_{j+1}-1} } \frac{\theta_k}{k}t^k \right) \right],
\end{align}
using the convention $x_{\ell+1}:=\infty$ and 
$\langle \textbf{s},\textbf{w}_n(\textbf{x})\rangle$ the standard scalar product of $\textbf{w}_n(\textbf{x})$ and $\textbf{s}$.

\end{lemma}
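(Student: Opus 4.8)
The plan is to run the same cycle-index argument already used for Theorem~\ref{thm:limit_theorem_cycles} and for its one-block specialisation Lemma~\ref{lem:generating_w_n}: unfold the definition of $\mathbb{P}_\Theta$, rewrite the linear functional $\langle\mathbf{s},\mathbf{w}_n(\mathbf{x})\rangle$ as a weighted sum of the cycle counts $C_k$, absorb that weighting into the product $\prod_m\theta_m^{C_m}$, and then read off the generating function from P\'olya's cycle index identity (Lemma~\ref{lem:cycle_index_theorem}). Since the statement is quoted as \cite[Lemma~4.2]{CiZe13}, no new idea is needed; the only point requiring attention is the index bookkeeping in the telescoping step.

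First I would use Definition~\ref{def:Pb_measure} to write
\[
h_n\,\ET{\mathrm{exp}\bigl(-\langle\mathbf{s},\mathbf{w}_n(\mathbf{x})\rangle\bigr)}
=\frac{1}{n!}\sum_{\sigma\in\Sn}\mathrm{exp}\bigl(-\langle\mathbf{s},\mathbf{w}_n(\mathbf{x})\rangle\bigr)\prod_{m=1}^{n}\theta_m^{C_m}.
\]
Using $w_n(y)=\sum_{k\geq y}C_k$ together with the convention $x_{\ell+1}=\infty$ (so that $w_n(x_{\ell+1})=0$), each difference $w_n(x_j)-w_n(x_{j+1})$ telescopes to $\sum_{k\in B_j}C_k$, where $B_j:=\{k\in\N:\lfloor x_j\rfloor\leq k\leq\lfloor x_{j+1}-1\rfloor\}$, and the blocks $B_1,\dots,B_\ell$ partition $[\lfloor x_1\rfloor,\infty)\cap\N$. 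Matching each $s_j$ with the block $B_j$ according to the order in which the entries of $\mathbf{w}_n(\mathbf{x})$ are listed — this relabelling is the one delicate point — gives $\langle\mathbf{s},\mathbf{w}_n(\mathbf{x})\rangle=\sum_{j=1}^{\ell}s_j\sum_{k\in B_j}C_k$, and hence
\[
\mathrm{exp}\bigl(-\langle\mathbf{s},\mathbf{w}_n(\mathbf{x})\rangle\bigr)\prod_{m=1}^{n}\theta_m^{C_m}=\prod_{m=1}^{n}a_m^{C_m},
\qquad
a_m:=\begin{cases}\theta_m,& m<\lfloor x_1\rfloor,\\ \theta_m e^{-s_j},& m\in B_j.\end{cases}
\]

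Finally I would apply Lemma~\ref{lem:cycle_index_theorem} to the sequence $(a_m)_{m\geq1}$, which yields, as formal power series in $t$,
\[
\sum_{n\geq0}\frac{t^n}{n!}\sum_{\sigma\in\Sn}\prod_{m=1}^{n}a_m^{C_m}
=\mathrm{exp}\Bigl(\sum_{m\geq1}\frac{a_m}{m}t^m\Bigr)
=\mathrm{exp}\Bigl(g_\Theta(t)+\sum_{j=1}^{\ell}(e^{-s_j}-1)\sum_{k\in B_j}\frac{\theta_k}{k}t^k\Bigr),
\]
the last step being nothing but the split of $\sum_m a_m t^m/m$ into $g_\Theta(t)$ plus the correction produced by the factors $e^{-s_j}$. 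Extracting $[t^n]$ and dividing by $h_n$ gives the asserted identity. Everything takes place at the level of formal power series, so there is no convergence issue, and truncating the infinite last block $B_\ell$ to $k\leq n$ does not affect $[t^n]$; accordingly I do not expect any genuine obstacle here beyond keeping the block decomposition, the floor conventions, and the order of the entries of $\mathbf{w}_n(\mathbf{x})$ straight in the telescoping step.
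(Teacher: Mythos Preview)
Your argument is correct and is exactly the standard cycle-index computation one would expect; the paper itself does not supply a proof but simply quotes the result from \cite[Lemma~4.2]{CiZe13}, where the same route (rewrite $\langle\mathbf{s},\mathbf{w}_n(\mathbf{x})\rangle$ as $\sum_k c_k C_k$ and apply Lemma~\ref{lem:cycle_index_theorem}) is used. Your explicit flagging of the relabelling between the ordering of the components of $\mathbf{w}_n(\mathbf{x})$ in \eqref{eq:increm} and the pairing $s_j\leftrightarrow B_j$ in \eqref{eq:generating_w_n_x_y} is appropriate, since the paper's conventions are indeed slightly loose there.
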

The first step is again to apply Cauchy's integral formula to \eqref{eq:generating_w_n_x_y} and 
to replace for all $j$ with $1\leq j\leq \ell$ the points $x_j$ by $x_j n^*$ and all $s_j$ by $s_j^*:=s_j(\overline{n})^{1/2}$. 
Further, we use the same curve as in the proof Theorem~\ref{thm:limit_shape_saddle}, but with a slightly different $r'$. 
Explicitly, we replace $r'$ by
\begin{align}
 r'_\ell = r + v_\ell
 \ \text{ with } \
 v_\ell:=\frac{\sum_{j=1}^\ell (e^{-s_j^*}-1) r^k (e^{-x_j}- e^{-x_{j+1}})}{ne^{-r}}
 \label{eq:def_v_limit2}
\end{align}
and use the same  $\delta= (ne^{-r})^{-5/12}$.
We then proceed to apply the saddle point method so that we arrive at
\begin{align*}
 \ET{\mathrm{exp}\bigl(-\langle \textbf{s},\widetilde{\textbf{w}}_n(\textbf{x})\rangle \bigr)}
 &=
  \frac{\exp\big(-v_\ell+ f_\ell(0) \big) }{e^{r} \sqrt{2\pi(P''(r)+ ne^{-r})  }} \left(1+ O\left(r^{-1}\right)  \right)
\end{align*}
with
\begin{align*}
f_\ell(\varphi) 
&:= 
P(r'-i\varphi) +ne^{-r'}e^{i\varphi}+  \sum_{j=1}^\ell (e^{-s_j^*}-1)  \sum_{k=\floor{x_jn^*} }^{\floor{x_{j+1}n^*-1} } \frac{\theta_m}{m} e^{-me^{-r'}e^{i\varphi}} .
\end{align*}
To prove the theorem, we have only to determine the coefficients of $s_j^2$ and $s_is_j$ in $f_\ell(0)$.
To do this, we first look at $v_\ell$. We use  $ne^{-r} = P'(r)$ and obtain
\begin{align}
  v_\ell
  &=
  \sum_{j=1}^\ell (e^{-x_j}- e^{-x_{j+1}})\left(- \frac{s_j}{\sqrt{\overline{n}}} + \frac{s_j^2}{2\overline{n}} +O\big( s_j^3(\overline{n})^{-3/2}\big) \right)\big(1+O(1/r) \big).
   \label{eq:v_as_fkt_in_s2}
\end{align}
Using the expansion
\begin{align*}
 P(r'_\ell)
 =&\,
 P(r+v_\ell)
 =
 P(r) + P'(r)v_\ell+ \frac{1}{2}P''(r) v_\ell^2 + O(v_\ell^3 \log^{k-2} n),
\end{align*}
and $P'(r) \sim r^k$ and $P''(r) =O(r^{k-1})$,
we immediately get 
\begin{align}
 [s_j^2]&\left[P(r'_\ell)\right]
 =\,
 \frac{(e^{-x_j}- e^{-x_{j+1}})}{2} (1+O(1/r)) \ &&\text{ for } 1\leq j\leq \ell,\\
  [s_is_j]&\left[P(r'_\ell)\right]
 =\,
 O(1/r))  
 \ &&\text{ for } i\neq j.
  \label{eq:limit_shape_P3}
\end{align}
Furthermore, using
\begin{align*}
 ne^{-r'_\ell}
 =&\,
 ne^{-r-v_\ell}
 =
 ne^{-r}e^{-v_\ell}
 =
 P'(r)\big(1-v_\ell+v_\ell^2/2+O(v_\ell^3)\big),
\end{align*}
we obtain
\begin{align}
 [s_j^2]&\left[ne^{r'_\ell}\right]
 =\,
 \frac{ (e^{-x_j}- e^{-x_{j+1}})^2   - (e^{-x_j}- e^{-x_{j+1}})  }{2}(1+O(1/r))
 \ &&\text{ for } 1\leq j\leq \ell,\\
  [s_is_j]&\left[ne^{r'_\ell}\right]
 =\,
(e^{-x_j}- e^{-x_{j+1}})(e^{-x_i}- e^{-x_{i+1}})(1+O(1/r))
 \ &&\text{ for } i\neq j.
  \label{eq:limit_shape_ner2}
\end{align}
Finally, applying Lemma~\ref{lem:eulermac1}, we get 
\begin{align}
 \sum_{j=1}^\ell (e^{-s_j^*}-1)  \sum_{k=\floor{x_jn^*} }^{\floor{x_{j+1}n^*-1} } \frac{\theta_m}{m} e^{-me^{-r'}} 
 &=
  \sum_{j=1}^\ell (e^{-s_j^*}-1)  \left(
    r^k \int_{x_j}^{x_{j+1}}  u^{-1} e^{-u}\,du+ O( r^{k-1}) \right).
\end{align}
This implies
\begin{align}
  [s_j^2]&\left[ \frac{1}{2}\sum_{j=1}^\ell (e^{-s_j^*}-1)  \sum_{k=\floor{x_jn^*} }^{\floor{x_{j+1}n^*-1} } \frac{\theta_m}{m} e^{-me^{-r'}} \right]
   =\,
  \int_{x_j}^{x_{j+1}}  u^{-1} e^{-u}\,du(1+O(1/r)),\\
  [s_is_j]&\left[ \sum_{j=1}^\ell (e^{-s_j^*}-1)  \sum_{k=\floor{x_jn^*} }^{\floor{x_{j+1}n^*-1} } \frac{\theta_m}{m} e^{-me^{-r'}} \right]
   =\,
   O(1/r).
    \label{eq:limit_shape_ugly_sum2}
\end{align}
Combining all these equations, we obtain
\begin{align}
  [s_j^2]&\left[ f_\ell(0) \right]
   =\,
   \frac{ (e^{-x_j}- e^{-x_{j+1}})^2 +\int_{x_j}^{x_{j+1}}  u^{-1} e^{-u}\,du(1+O(1/r))  }{2}(1+O(1/r)),\\
  [s_is_j]&\left[ f_\ell(0)\right]
   =\,
  (e^{-x_j}- e^{-x_{j+1}})(e^{-x_i}- e^{-x_{i+1}})(1+O(1/r)).
    \label{eq:limit_shape_allsums2}
\end{align}
This completes the proof Theorem~\ref{thm:beh_increments_saddle}.

%
%
 
\subsection{Proof of Theorem~\ref{thm:func_CLT_saddle}}
\label{sec:proof_func_CLT}
 
We use here the same method of proof as in \cite[Section~4.3]{CiZe13} and as in \cite{Ha90}. 
Theorem~\ref{thm:beh_increments_saddle} gives us the convergence of the finite dimensional distributions.
It thus remains to prove the tightness of the process. 
This means we have to show that the moment condition in \cite[p.128]{Bi99} is fulfilled.
We begin with the generating function. We have
\begin{lemma}[{\cite[Lemma~4.10]{CiZe13}}]
\label{lemma:tightness_saddle_generating}
For $0\leq x_1<x\leq x_2$ arbitrary and $x^*:={x n^*}$, $x_1^*:={x_1 n^*}$ and $x_2^*:={x_2 n^*}$
\begin{align}
\label{eq:lem_tightness_saddle_generating}
&(\overline n)^2\cdot h_n\,\ET{\big(\widetilde w_n(x^*)-\widetilde w_n(x_1^*)\big)^2 \big(\widetilde w_n(x_2^*)-\widetilde w_n(x^*)\big)^2}\\
&=
[t^n]\left[\left((g_{x_1^*}^{x^*}(t)-E_{x_1}^{x})^2 + g_{x_1^*}^{x^*}(t) \right) \left((g_{x^*}^{x_2^*}(t)-E_{x}^{x_2})^2 + g_{x^*}^{x_2^*}(t) \right) \mathrm{exp}(g_\Theta(t)) \right]\nonumber
\end{align}
with
\begin{align*}
 g_{a}^b(z):=\sum_{a\leq j <b} \frac{\vartheta_j}{j}z^j 
 \ \text{ and }\
 E_a^b = \ET{ w_n(bn^*)- w_n(an^*)}
 \ \text{ for }a<b.
\end{align*}
\end{lemma}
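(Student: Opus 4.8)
The plan is to prove \eqref{eq:lem_tightness_saddle_generating} as a purely algebraic identity: I would reduce the left hand side to a finite linear combination of joint falling-factorial moments of cycle counts, evaluate each such moment by P\'olya enumeration, and recognise the resulting coefficient generating function. No analysis enters here; the saddle-point asymptotics are needed only later, when \eqref{eq:lem_tightness_saddle_generating} is used to verify the tightness condition.

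First I would pass from the increments of $\widetilde w_n$ to the underlying cycle-count sums. Put $A:=\sum_{\floor{x_1^*}\leq m<\floor{x^*}}C_m$ and $B:=\sum_{\floor{x^*}\leq m<\floor{x_2^*}}C_m$, so that $w_n(x_1^*)-w_n(x^*)=A$ and $w_n(x^*)-w_n(x_2^*)=B$ (with the floor conventions already present in Lemma~\ref{lem:generating_w_n_x_y}). From the definition of $\widetilde w_n$ in \eqref{eq:CLT_Pt} and of the constants $E_a^b$, the deterministic centrings telescope inside each difference and there collapse to the exact expectations, so that $\big(\widetilde w_n(x^*)-\widetilde w_n(x_1^*)\big)^2=(\overline n)^{-1}\big(A-\ET{A}\big)^2$ and $\big(\widetilde w_n(x_2^*)-\widetilde w_n(x^*)\big)^2=(\overline n)^{-1}\big(B-\ET{B}\big)^2$ with $\ET{A}=E_{x_1}^{x}$ and $\ET{B}=E_{x}^{x_2}$. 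Hence the four factors $(\overline n)^{-1/2}$ cancel the prefactor $(\overline n)^{2}$ and the left hand side of \eqref{eq:lem_tightness_saddle_generating} equals $h_n\,\ET{(A-\ET{A})^2\,(B-\ET{B})^2}$.

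The second step is the generating-function formula for the joint factorial moments of $A$ and $B$. Since the two index ranges $[\floor{x_1^*},\floor{x^*})$ and $[\floor{x^*},\floor{x_2^*})$ are disjoint, Lemma~\ref{lem:cycle_index_theorem}, applied with the weights $\theta_m$ multiplied by a marker $u$ on the first range and $y$ on the second, gives the formal power series identity $\sum_{n\geq 0}h_n\,\ET{u^{A}y^{B}}t^n=\exp\!\big(g_\Theta(t)+(u-1)g_{x_1^*}^{x^*}(t)+(y-1)g_{x^*}^{x_2^*}(t)\big)$ (alternatively, this can be read off from Lemma~\ref{lem:generating_w_n_x_y}). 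Differentiating $a$ times in $u$ and $b$ times in $y$, setting $u=y=1$, and reading off $[t^n]$ yields $h_n\,\ET{(A)_a(B)_b}=[t^n]\big(g_{x_1^*}^{x^*}(t)^{a}\,g_{x^*}^{x_2^*}(t)^{b}\exp(g_\Theta(t))\big)$, where $(A)_a=A(A-1)\cdots(A-a+1)$.

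Finally I would expand in the falling-factorial basis: writing $(A-c)^2=(A)_2+(1-2c)(A)_1+c^2$ and the analogue for $B$, I multiply the two quadratics into a sum of nine monomials $(A)_a(B)_b$, $0\leq a,b\leq 2$, apply the previous display term by term, and collect. Because the polynomial $\xi\mapsto \xi^2+(1-2c)\xi+c^2$ equals $(\xi-c)^2+\xi$, the coefficients assemble exactly into $\big((g_{x_1^*}^{x^*}(t)-\ET{A})^2+g_{x_1^*}^{x^*}(t)\big)\big((g_{x^*}^{x_2^*}(t)-\ET{B})^2+g_{x^*}^{x_2^*}(t)\big)\exp(g_\Theta(t))$, and substituting $\ET{A}=E_{x_1}^{x}$, $\ET{B}=E_{x}^{x_2}$ gives \eqref{eq:lem_tightness_saddle_generating}. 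There is no genuine obstacle here; the only point that needs care is the bookkeeping in the first step — matching the deterministic centrings in the definitions of $\widetilde w_n$ and of the increments in Theorem~\ref{thm:beh_increments_saddle} with the exact expectations $E_a^b$, including signs and floor conventions — while the P\'olya step and the collapse $\xi^2+(1-2c)\xi+c^2=(\xi-c)^2+\xi$ are routine.
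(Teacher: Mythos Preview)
The paper does not give its own proof of this lemma; it is quoted verbatim from \cite[Lemma~4.10]{CiZe13}. Your argument is the standard one and is correct in substance: the factorial-moment identity $h_n\,\ET{(A)_a(B)_b}=[t^n]\bigl(g_{x_1^*}^{x^*}(t)^{a}\,g_{x^*}^{x_2^*}(t)^{b}\exp(g_\Theta(t))\bigr)$ follows from Lemma~\ref{lem:cycle_index_theorem} exactly as you describe, and the collapse $(\xi-c)^2+\xi=\xi^2+(1-2c)\xi+c^2$ does the rest. One point worth stressing is that this algebraic identity holds for \emph{any} pair of constants $c_1,c_2$, not only for the expectations; consequently \eqref{eq:lem_tightness_saddle_generating} is an exact identity as soon as the constants $E_{x_1}^{x}$, $E_{x}^{x_2}$ appearing on the right are the very same numbers by which $A$ and $B$ are shifted on the left. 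This neutralises the worry you raise in your last paragraph: the centring $\overline n\,(w_\infty(x)+z_n(x))$ in \eqref{eq:CLT_Pt} need not coincide literally with $\ET{w_n(xn^*)}$ for the lemma to hold; one simply reads $E_a^b$ as the difference of the centring constants used in $\widetilde w_n$ (the paper's notation is somewhat loose on this point, as is visible also in the proof of Lemma~\ref{lemma:tightness_saddle}). Up to this harmless relabelling and the sign convention $w_n(x_1^*)-w_n(x^*)=A\geq 0$ versus $E_{x_1}^{x}$, your proof is complete.
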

We can now prove the tightness of the process $\widetilde w_n(x^*)$.
\begin{lemma}
\label{lemma:tightness_saddle} 
We have for $0\leq x_1<x\leq x_2<K$ with $K$ arbitrary 
\begin{equation}\label{eq:goal_2}
\ET{(\widetilde w_n(x^*)-\widetilde w_n(x_1^*))^2 (\widetilde w_n(x_2^*)-\widetilde w_n(x^*))^2}
=
O\big((x_2-x_1)^2\big).
\end{equation}
\end{lemma}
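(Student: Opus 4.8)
The goal is to bound the fourth mixed moment appearing in \eqref{eq:goal_2} by $O((x_2-x_1)^2)$, and the natural route is to start from the exact generating-function identity in Lemma~\ref{lemma:tightness_saddle_generating} and extract the $[t^n]$-coefficient with the saddle-point machinery already developed. First I would rewrite the right-hand side of \eqref{eq:lem_tightness_saddle_generating} by expanding the product of the two quadratic factors $\bigl((g_{x_1^*}^{x^*}(t)-E_{x_1}^{x})^2 + g_{x_1^*}^{x^*}(t)\bigr)$ and $\bigl((g_{x^*}^{x_2^*}(t)-E_{x}^{x_2})^2 + g_{x^*}^{x_2^*}(t)\bigr)$ as a finite sum of terms of the form $f(t)\exp(g_\Theta(t))$, where each $f$ is a product of the "windowed" partial sums $g_a^b(t)$ (minus their expectations). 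The key analytic input is that each such $f$ is holomorphic in $|t|<1$, continuous on the punctured disc, and has a controlled behaviour as $t=e^{-w}\to 1$ coming from \eqref{eq:g_theta_to_lexi} and Lemma~\ref{lem:eulermac1}: the window sum $\sum_{x_a n^* \le m < x_b n^*}\frac{\theta_m}{m}e^{-mw}$ is of size $O\bigl(r^k(e^{-x_a}-e^{-x_b})\bigr)$, hence of size $O\bigl(r^k(x_b-x_a)\bigr)$ on the relevant range, since the increments $x_b - x_a$ are small.

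Next I would run the contour-shift argument from the proof of Theorem~\ref{thm:aux_asypmtotic} and Lemma~\ref{lem:relies2} essentially verbatim: shift to the contour $\gamma'' = \gamma_1''\cup\gamma_2''\cup\gamma_3''$ with the same radius $e^{-r}$ (no shift in the saddle is needed here because the extra factors $f(t)$ are subexponential), show $I_{n,1}$ and $I_{n,3}$ are of lower order exactly as before using $k\ge 3$, and evaluate $I_{n,2}$ over $[-\delta,\delta]$ with $\delta=(ne^{-r})^{-5/12}$. On the main arc the factor $f(e^{-e^{-r}e^{i\varphi}})$ is, to leading order, $f(e^{-e^{-r}})$, and the Gaussian integral contributes the usual $\exp(P(r)+ne^{-r})/\bigl(e^r\sqrt{2\pi(P''(r)+ne^{-r})}\bigr)=h_n(1+o(1))$. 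Dividing by $h_n$ and tracking the $f(e^{-e^{-r}})$ factor, one gets that the left-hand side of \eqref{eq:goal_2} equals $f_{\mathrm{lead}}(1+o(1))$ where $f_{\mathrm{lead}}$ is obtained by replacing each $g_a^b(t)$ in the product by its value at the saddle, namely $r^k(e^{-x_a}-e^{-x_b}) + O(r^{k-1})$, and each $E_a^b$ by its asymptotic $r^k(e^{-x_a}-e^{-x_b})+O(r^{k-1})$ — here the crucial cancellation is that $g_a^b$ evaluated at the saddle and $E_a^b$ agree to leading order, so each difference $g_{x_1^*}^{x^*} - E_{x_1}^x$ contributes $O(r^{k-1})$, which after dividing by the prefactor $(\overline n)^2 = r^{2k}$ is negligible.

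Assembling these, the dominant surviving contribution to $\ET{\cdots}$ comes from the cross term $g_{x_1^*}^{x^*}(t)\cdot g_{x^*}^{x_2^*}(t)$, which at the saddle is $r^{2k}(e^{-x_1}-e^{-x})(e^{-x}-e^{-x_2})$; after dividing by $(\overline n)^2 = r^{2k}$ this is $(e^{-x_1}-e^{-x})(e^{-x}-e^{-x_2}) = O\bigl((x-x_1)(x_2-x)\bigr) = O\bigl((x_2-x_1)^2\bigr)$ since $x_1<x<x_2$ and $u\mapsto e^{-u}$ is Lipschitz on $[0,K]$. The squared-difference terms $(g-E)^2$ contribute $O(r^{2k-2}\cdot r^{2k})/r^{4k}\cdot(\text{window})^2$, which is even smaller, and all mixed terms are handled the same way; one checks the window factors always appear in a combination giving at least one factor $(x_2-x_1)$ and never more than two. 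The main obstacle is the bookkeeping in the error analysis: one must verify that the $o(1)$ from the saddle-point approximation, which is $O(\log^{-1/2}n)$ or $O(1/r)$, does not interfere multiplicatively with the window factors in a way that destroys the $O((x_2-x_1)^2)$ bound — i.e. one needs the error to be uniform in the $x_j$ over compact sets, and one needs the crude bound \eqref{eq:crude_upper_bound_w}-style estimates on the off-arc pieces to still beat the polynomially-large factors $f(t)=O(r^{2k})$ coming from the partial sums, which is again exactly where $k\ge 3$ is used. This uniformity is routine given the apparatus already in place, so I would state it and refer back to the proofs of Lemma~\ref{lem:relies2} and Theorem~\ref{thm:beh_increments_saddle} for the repetitive estimates.
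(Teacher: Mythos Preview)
Your overall strategy matches the paper's proof: start from Lemma~\ref{lemma:tightness_saddle_generating}, run the saddle-point argument of Theorem~\ref{thm:aux_asypmtotic} with the same contour and the same $r$, and reduce to estimating the product
\[
\bigl((g_{x_1^*}^{x^*}-E_{x_1}^{x})^2+g_{x_1^*}^{x^*}\bigr)\bigl((g_{x^*}^{x_2^*}-E_{x}^{x_2})^2+g_{x^*}^{x_2^*}\bigr)
\]
at the saddle point $t=e^{-e^{-r}}$, divided by $(\overline n)^2$.

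There is, however, a genuine gap in your cancellation step. You assert that both $g_a^b(e^{-e^{-r}})$ and $E_a^b$ equal a common leading term plus $O(r^{k-1})$ and conclude $g_a^b-E_a^b=O(r^{k-1})$. This bound carries \emph{no} factor of $(x_b-x_a)$, and that is fatal for the tightness criterion: the contribution of, say, $(g_{x_1^*}^{x^*}-E_{x_1}^x)^2\cdot g_{x^*}^{x_2^*}$ divided by $(\overline n)^2$ would then be of order
\[
\frac{O(r^{2k-2})\cdot O\bigl(r^k(x_2-x)\bigr)}{r^{2k}}=O\bigl(r^{k-2}(x_2-x)\bigr),
\]
which for $k\ge 3$ blows up as $n\to\infty$ and cannot possibly be $O((x_2-x_1)^2)$ uniformly. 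Your closing remark that ``the window factors always appear'' is the right instinct, but the explicit bound you wrote down for $g-E$ does not deliver it.

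The paper closes this gap by evaluating $E_a^b$ itself through the saddle-point method: differentiating \eqref{eq:generating_w_n_x_y} gives $E_a^b=h_n^{-1}[t^n]\bigl(g_a^b(t)\exp(g_\Theta(t))\bigr)$, and a further application of the saddle-point computation yields $E_a^b=g_a^b(e^{-e^{-r}})(1+o(1))$. Crucially this produces $g_a^b-E_a^b=g_a^b(e^{-e^{-r}})\cdot o(1)$, which \emph{inherits} the window factor. Hence $(g_a^b-E_a^b)^2=O\bigl(g_a^b(e^{-e^{-r}})\bigr)$ on $[0,K]$, so each bracket is $O\bigl(g_a^b(e^{-e^{-r}})\bigr)=O\bigl(\overline n\,(x_b-x_a)\bigr)$ by Lemma~\ref{lem:eulermac1}, and the product divided by $(\overline n)^2$ is $O\bigl((x-x_1)(x_2-x)\bigr)=O\bigl((x_2-x_1)^2\bigr)$.
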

\begin{proof}
We use Lemma~\ref{lemma:tightness_saddle_generating} and apply the proof of Theorem~\ref{thm:aux_asypmtotic} to the function
$$
g_n(t)
:=
\exp\left(
g_\Theta(t)
+
\log\left((g_{x_1^*}^{x^*}(t)-E_{x_1}^{x})^2 + g_{x_1^*}^{x^*}(t) \right)
+
\log\left((g_{x^*}^{x_2^*}(t)-E_{x_1}^{x})^2 + g_{x^*}^{x_2^*}(t) \right)
\right).
$$ 
We claim that we can use the same curve and the same $r$ and $\delta$ as in the proof of Theorem~\ref{thm:aux_asypmtotic}.
Theorem~\ref{thm:limit_shape_saddle} and Lemma~\ref{lem:eulermac1} imply immediately that
$E_{x_1}^{x} = O(\overline{n})$ and $g_{x_1^*}^{x^*}(e^{e^{-r}e^{i\varphi}  })= O(\overline{n})$.
It is thus immediate to show that we indeed can use the same curve and the same $r$ and $\delta$.
We thus arrive at
\begin{align*}
&(\overline n)^2\,\ET{\big(\widetilde w_n(x^*)-\widetilde w_n(x_1^*)\big)^2 \big(\widetilde w_n(x_2^*)-\widetilde w_n(x^*)\big)^2}\\
&=
\left((g_{x_1^*}^{x^*}(e^{-e^{-r}})-E_{x_1}^{x})^2 + g_{x_1^*}^{x^*}(e^{-e^{-r}}) \right) \left((g_{x^*}^{x_2^*}(e^{-e^{-r}})-E_{x}^{x_2})^2 + g_{x^*}^{x_2^*}(e^{-e^{-r}}) \right) \big(1+o(1)\big).
\end{align*}
Differentiating \eqref{eq:generating_w_n_x_y} with respect to $s_1$ and substituting $s_1=0$ shows that
 \begin{align}
   E_{x_1}^x = \ET{\widetilde w_n(x^*)-\widetilde w_n(x_1^*)}
   =
   \frac{1}{h_n}[t^n]\left[ g_{x^*}^{x_2^*}(t) \exp(g_\Theta(t))\right]
   =
   g_{x^*}^{x_2^*}(e^{-e^{-r}})(1+o(1)).
 \end{align}
It is then clear that $g_{x_1^*}^{x^*}(e^{-e^{-r}})-E_{x_1}^x =o(x-x_1)$. 
Therefore 
$$
\left((g_{x_1^*}^{x^*}(e^{-e^{-r}})-E_{x_1}^{x})^2 + g_{x_1^*}^{x^*}(e^{-e^{-r}})\right)(\overline n)^{-1}=O\left(g_{x_1^*}^{x^*}(e^{-e^{-r}})(\overline n)^{-1}\right).
$$
Applying Lemma~\ref{lem:eulermac1} then shows  $g_{x_1^*}^{x^*}(e^{-e^{-r}})(\overline n)^{-1}=O(x-x_1)$. Similar considerations apply for $x_2$. This completes the proof.
\end{proof}

\section{Total variation distance}
\label{sec:dtv}
We have proven in Section~\ref{sec:cyclecounts} that for each $b\in \N$
\begin{align}\label{eq:intro_convergence_fixed_b}
(C_1^n, C_2^n, \ldots, C_b^n) \overset{d}{\longrightarrow} (Y_1, Y_2, \ldots,Y_b), \quad \quad \text{as } n \rightarrow \infty.
\end{align}
with $(Y_m)_{m=1}^b$  mutually independent Poisson random variables with $\E{Y_{m}}=\frac{\theta_m}{m}$ for all $m$.
Unfortunately, the convergence in \eqref{eq:intro_convergence_fixed_b} is often not strong enough, since many interesting random variables involve all or almost all cycle counts $C_m$.
Thus, one needs estimates where $b$ and $n$ grow simultaneously.
The quality of the approximation can conveniently be described in terms of the total variation distance. For all $1 \leq b \leq n$ denote by $d_b(n)$ the total variation distance
\begin{align}\label{eq:intro_tot_var_dist_d_bn}
d_b(n) 
&:= 
d_{\operatorname{TV}}\big(\mathcal{L}(C_1^n, C_2^n, ..., C_b^n), \mathcal{L}(Y_1, Y_2, ..., Y_b)\big)
\end{align}

The main result of this section is

\begin{theorem}
\label{thm:DTV}
Let $(b(n))_{n\in\N}$ be a sequence so that $b(n) = o \big( n^c\big)$ with $0<c< (3k+3)^{-\frac{1}{k+1}}$. Then one has that
\begin{align}
 d_b(n) = o\big(1  \big).
\end{align}
\end{theorem}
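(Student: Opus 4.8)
The plan is to bound $d_b(n)$ using the standard Fristedt--type coupling and the conditioning relation for the measure $\mathbb{P}_\Theta$. The cycle counts $C_m$ under $\mathbb{P}_\Theta$ have the property that, conditionally on $\sum_{m=1}^n m C_m = n$, they are distributed as independent random variables $Z_m$ with $Z_m \sim \mathrm{Pois}(\theta_m x^m/m)$ for any $x\in(0,1)$; taking $x\to 1$ in the generating-function picture, one compares $(C_1,\dots,C_b)$ with the independent Poisson vector $(Y_1,\dots,Y_b)$ having $\E{Y_m}=\theta_m/m$. A clean way to package this is the classical inequality (Arratia--Barbour--Tavar\'e)
\begin{align}
d_b(n) \leq \Pb{\sum_{m=1}^b m Y_m \in \cdot} \text{-type bound},
\end{align}
more precisely $d_b(n) \le d_{\mathrm{TV}}\big(\mathcal L(R_{0,n}),\,\mathcal L(R_{b,n})\big)$ where $R_{b,n}:=\sum_{m=b+1}^{n} m Z_m$ counts the ``mass carried by long cycles'', together with the identity
\begin{align}
d_b(n) = \frac{1}{2}\E{\left| 1 - \frac{h_n}{\text{(Poisson normaliser)}}\cdot\frac{\mathbb{P}[R_{b,n} = n - T_b]}{\mathbb{P}[R_{0,n}=n]} \right|},
\end{align}
where $T_b = \sum_{m=1}^b m C_m$. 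So the first step is to record this reduction, citing the relevant framework, reducing the problem to showing that a certain ratio of ``restricted'' coefficients $[t^n]$ of $\exp(g_\Theta(t))$ with the low-order terms $\sum_{m\le b}\theta_m t^m/m$ removed is close to $1$.

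Second, I would make this analytic: write $g_{\Theta,b}(t) := g_\Theta(t) - \sum_{m=1}^{b}\frac{\theta_m}{m}t^m$ and observe that for $b=o(n^c)$ the removed polynomial and its derivatives are small on the relevant saddle circle $|t|=e^{-r}$ with $r$ as in \eqref{eq:thm:aux_asypmtotic_saddle_eq}. Concretely, on $|t| = e^{-e^{-r}}$ one has $\sum_{m=1}^b \frac{\theta_m}{m}|t|^m \ll \sum_{m=1}^b \frac{\log^k m}{m} \ll \log^{k+1} b \ll (c\log n)^{k+1}$, which is of smaller order than $P(r)\sim \frac{\log^{k+1} n}{k+1}$ provided $c^{k+1} < \frac{1}{k+1}$ — and one needs a bit of room for the error terms, which is exactly where the sharper bound $c < (3k+3)^{-1/(k+1)}$, i.e. $3(k+1)c^{k+1}<1$, enters. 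I would then apply Theorem~\ref{thm:aux_asypmtotic} (or its Corollary~\ref{cor:aux_asypmtotic}-style adaptation) to $\exp(g_{\Theta,b}(t))$: since removing a polynomial that is $o(P(r))$ in absolute value on the contour does not move the saddle point $r$ to leading order nor change the Gaussian width, one gets
\begin{align}
[t^n]\exp\bigl(g_{\Theta,b}(t)\bigr)
=
\frac{\exp\bigl(P(r) - \sum_{m=1}^b \frac{\theta_m}{m}e^{-me^{-r}} + ne^{-r}\bigr)}{e^{r}\sqrt{2\pi P''(r)+2\pi ne^{-r}}}\bigl(1 + o(1)\bigr),
\end{align}
with the $o(1)$ uniform in $b=o(n^c)$ under the stated constraint on $c$.

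Third, I would assemble the pieces. Using $h_n = [t^n]\exp(g_\Theta(t))$ from Section~\ref{sec:hn} and the comparison above, the ratio governing $d_b(n)$ is
\begin{align}
\frac{[t^n]\exp(g_{\Theta,b}(t))}{h_n}\cdot\exp\Bigl(\sum_{m=1}^b \tfrac{\theta_m}{m}\Bigr),
\end{align}
and since $\sum_{m=1}^b \frac{\theta_m}{m}(1 - e^{-me^{-r}}) = \sum_{m=1}^b \frac{\theta_m}{m}\,O(me^{-r}) = O(e^{-r}\sum_{m\le b}\theta_m) = O(b\log^k b/\log n) = o(1)$ for $b = o(n^c)$, the exponential correction tends to $1$. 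Feeding this into the total-variation bound — together with a local-limit/large-deviation estimate showing $\mathbb{P}[T_b = \ell]$ decays fast enough in $\ell$, which follows from $\E{T_b} = \sum_{m\le b}\theta_m = O(b\log^k b) = o(n)$ and a Chernoff bound on the independent Poisson approximants — yields $d_b(n) = o(1)$.

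The main obstacle I anticipate is the uniformity of the saddle-point asymptotics when the low-order part of $g_\Theta$ is deleted and $b$ is allowed to grow with $n$: one must check that the perturbation $\sum_{m\le b}\theta_m t^m/m$ genuinely is negligible not just at the saddle but along the whole contour $\gamma''$ used in the proof of Theorem~\ref{thm:aux_asypmtotic}, in particular on the arcs $\gamma''_1,\gamma''_3$ where the bound \eqref{eq:bound_P_and_ner} is used — the deleted polynomial could in principle create its own contributions near $t=1$. Tracking the exact power of $\log n$ lost at each stage is what pins down the exponent $(3k+3)^{-1/(k+1)}$, and getting the constant $3(k+1)$ rather than something larger requires being slightly careful, but this is bookkeeping rather than a genuine difficulty. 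I would also double-check that the error term $O(\log^{-k/2} n)$ in Theorem~\ref{thm:aux_asypmtotic} — as opposed to a power saving — is still enough to beat the $\exp(O((c\log n)^{k+1}))$-sized discrepancies; it is, precisely because the constraint on $c$ forces $3(k+1)c^{k+1}<1$ with strict inequality leaving a polynomial-in-$\log n$ margin.
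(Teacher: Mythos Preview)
Your plan is essentially the paper's own proof: reduce via the conditioning relation to the Arratia--Tavar\'e identity $d_b(n)=\sum_\ell \Pb{T_{0b}=\ell}\bigl(1-\Pb{T_{bn}=n-\ell}/\Pb{T_{0n}=n}\bigr)_+$, localise $\ell$ near $\E{T_{0b}}$ by a second-moment bound (the paper uses Chebyshev, you mention Chernoff), rewrite the ratio as $\exp\bigl(\sum_{m\le b}\theta_m/m\bigr)\cdot h_n^{-1}\,[z^{n-\ell}]\exp\bigl(g_{\Theta,b}(z)\bigr)$, and run the saddle-point argument of Theorem~\ref{thm:aux_asypmtotic} on $\exp(g_{\Theta,b})$, checking that the deleted polynomial is negligible both near the saddle and on the arcs $\gamma''_1,\gamma''_3$. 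The paper makes the contour splitting explicit (cutting at $|w|=1/(b\log^{2k}b)$) and shows on the arcs that $\Re\bigl(-\sum_{m\le b}\theta_m e^{-imx}/m\bigr)\le c^{k+1}\log^{k+1}n+O(\log^k n)$, which combined with Lemma~\ref{lem:int_exp_P} yields exactly the constraint $3(k+1)c^{k+1}<1$ you identified. One small slip: your estimate $O(e^{-r}\sum_{m\le b}\theta_m)=O(b\log^k b/\log n)$ uses the wrong size of $e^{-r}$ (it is $\sim \log^k n/n$, not $1/\log n$), though the conclusion $o(1)$ survives.
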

\begin{remark}
The computations in the proof of Theorem~\ref{thm:DTV} and the similarities with the cases $\theta_m\approx \vartheta$ and $\theta_m\sim m^\gamma$ strongly suggest that
Theorem~\ref{thm:DTV} might not be optimal. We expect that $ d_b(n) = o\big(1  \big)$ if and only if $b(n) = o(n/\log^k(n))$. 
However, our current estimates for the error terms are too weak to prove this and a more sophisticated bound would be needed.  
\end{remark}

For the proof of Theorem~\ref{thm:DTV}, we follow the ideas in \cite{ArTa92c}.
These ideas have been for instance successfully applied in \cite{StZe14a} for the case $\theta_m \sim m^\alpha$ and in \cite{BeScZe17} for random permutations without macroscopic cycles.
Before, we can prove Theorem~\ref{thm:DTV}, we have to make some preparations and introduce some notations. 

Let $\left(Y_{m}\right)_{m\in\N}$ be independent random variables with $Y_{m}\sim\mathrm{Poi}\left(\frac{\theta_m}{m}\right)$ for all $m\in\mathbb{N}$.
We use the notation $\boldsymbol{Y}_b :=\left(Y_1, Y_2, \dots, Y_{b(n)} \right)$ and $\boldsymbol{C}_b:=\left(C_1,C_2,\dots, C_{b(n)}\right)$ for the vector of the cycle counts up to length ${b(n)}$, 
and $\boldsymbol{a}=\left(a_1,a_2,\dots, a_{b(n)}\right)$ for a vector $\boldsymbol{a}\in\N^{b(n)}$. 
Inserting the definition of the total variation distance, we get
\begin{align}
d_b(n) 
=
\frac{1}{2}
\sum_{\boldsymbol{a}\in\N^{b(n)}} |\PT{\boldsymbol{C}_{b}=\boldsymbol{a}} - \Pb{\boldsymbol{Y}_{b}=\boldsymbol{a}}|.
\label{eq:def_dtv}
\end{align}

A corner stone for investigating the classical case of uniform random permutation in \cite{ArTa92c} is the so-called conditioning relation. 
To formulate this, 
let us define
\begin{equation}
T_{b_{1}b_{2}}:=\sum_{k=b_{1}+1}^{b_{2}}kY_{k}
\ \text{ for } \
b_1,b_2\in\N
\ \text{ with } \ b_1\leq b_2.
\label{eq:Tdef}
\end{equation}
The conditioning relation \cite[Equation (1.15)]{ABT02} now states that 
\begin{equation}
\PT{\boldsymbol{C}_{b}=\boldsymbol{a}}
=
\mathbb{P}\left[\left.\boldsymbol{Y}_{b}=\boldsymbol{a}\right|T_{0n}=n\right].
\label{eq:Poissondarstellung}
\end{equation}
It is direct to see that \eqref{eq:Poissondarstellung} indeed holds also for $\mathbb{P}_\Theta$.
Inserting \eqref{eq:Poissondarstellung} in \eqref{eq:def_dtv} and using the same computation as in the proof of \cite[Lemma~3.1]{ABT02}, one immediately obtains
\begin{align}
d_b(n) 
=
\sum_{\ell=0}^{\infty} 
\mathbb{P}\left[T_{0b\left(n\right)}=\ell\right] 
\left(1-\frac{\mathbb{P}\left[T_{b\left(n\right)n}=n-\ell\right]}{\mathbb{P}\left[T_{0n}=n\right]}\right)_+
\label{eq:dtv_with_B}
\end{align}
with $(x)_+=\max\{x,0\}$. Using this, we now can prove Proof of Theorem~\ref{thm:DTV}.
\begin{proof}[Proof of Theorem~\ref{thm:DTV}]
We spilt the sum in \eqref{eq:dtv_with_B} into a central and a non-cenral part.
Explicitly, we set
\begin{align}
 J:=\left[\E{T_{0b\left(n\right)}} - g(n)b(n)\log^{k/2} b(n), \, \E{T_{0b\left(n\right)}} + g(n)b(n)\log^{k/2} b(n)   \right] 
 \label{eq:def_intervall_dtv}
\end{align}
for some $g(n)$ with $g(n)\to\infty$ and $g(n)=o\big( \log^{k/2}(b(n))\big)$. We thus obtain
\begin{align}
d_b(n) 
\leq 
\Pb{T_{0b(n)}\notin J}
+\max_{\ell\in J} 
\left(1-\frac{\Pb{T_{b(n)n}=n-\ell}}{\Pb{T_{0n}=n}}\right)_+.
\label{eq:trivial_upper_bound_dtv}
\end{align}
We first look at the summand $\Pb{T_{0b(n)}\notin J}$. 
Recall, we have seen in \eqref{eq:alternative_cycle_weights2} that
\begin{align}
 \theta_m = \log^k(m) +\sum_{j=0}^{k-1} a_j \log^j(m) + o(1) \ \text{ for some } a_j\in\R.
 \label{eq:alternative_cycle_weights2dtv}
\end{align}
Using this and the fact that $\left(Y_{m}\right)_{m\in\N}$ are independent Poisson random variables, we get
\begin{align}
\E{T_{0b(n)}} 
&=
\sum_{m=1}^{b(n)} m\E{Y_m} 
= 
\sum_{m=1}^{b(n)} \theta_m
\sim 
b(n)\log^{k}(b(n)),\\
\mathrm{Var}(T_{0b(n)})
&=
\sum_{m=1}^{b(n)} m^2 \mathrm{Var}(Y_m)
=
\sum_{m=1}^{b(n)} m \theta_m \sim b^2(n) \log^{k} b(n).
\end{align}
Thus Chebyshev's inequality implies
\begin{align}
 \Pb{T_{0b\left(n\right)}\notin J}
 \leq \frac{\mathrm{Var}(T_{0b\left(n\right)})}{\big(g(n)b(n)\log^{k/2} b(n) \big)^2}
 =
 O\big( g^{-2}(n)\big).
\end{align}
This shows that $ \Pb{T_{0b\left(n\right)}\notin J}$ is $o(1)$. 
It thus remains to show that the second summand in \eqref{eq:trivial_upper_bound_dtv} is also $o(1)$.
Note that the probability generating function of $T_{b_1b_2}$ is given by
\begin{align}
 \E{z^{T_{b_1b_2}}}
 =
 \exp\left( \sum_{m=b_1+1}^{b_2} \frac{\theta_m}{m} \left( z^m-1\right) \right).
\end{align}
We thus obtain
\begin{align}
 \Pb{T_{b(n)n}=n-\ell} 
 &= 
  \exp\left( -\sum_{m=b(n)+1}^{n} \frac{\theta_m}{m}\right)
 [z^{n-\ell}]
 \left[
 \exp\left(  \sum_{m=b(n)+1}^{n} \frac{\theta_m}{m}z^m \right)
  \right]\nonumber\\
 &=
  \exp\left( -\sum_{m=b(n)+1}^{n} \frac{\theta_m}{m}\right)
 [z^{n-\ell}]
 \left[
 \exp\left(g_\Theta(z) - \sum_{m=1}^{b(n)} \frac{\theta_m}{m}z^m \right)
  \right],
 \\
 \Pb{T_{0n}=n}
  &= 
 \exp\left( -\sum_{m=1}^{n} \frac{\theta_m}{m}\right)
  [z^n]
  \left[
 \exp\left(  \sum_{m=1}^{n} \frac{\theta_m}{m}z^m \right)
 \right]\nonumber\\
 &=
  \exp\left( -\sum_{m=1}^{n} \frac{\theta_m}{m}\right)
  [z^n]
  \left[
 \exp\left(  g_\Theta(z) \right)
 \right].
\end{align}
Using Corollary~\ref{cor:Hn_generatingN}, we immediately get 
\begin{align}
 \frac{\Pb{T_{b(n)n}=n-\ell}}{\Pb{T_{0n}=n}}
 =
  \frac{\exp\left( \sum_{m=1}^{b(n)} \frac{\theta_m}{m}\right)}{h_n}
 [z^{n-\ell}]
 \left[
 \exp\left(g_\Theta(z) - \sum_{m=1}^{b(n)} \frac{\theta_m}{m}z^m \right)
  \right].
  \label{eq:dtv_sh9}
\end{align}
Theorem~\ref{thm:aux_asypmtotic} gives us the asymptotic behaviour of $h_n$.
Thus it remains to compute
\begin{align}
 I^{tv}_{n}:=
 [z^{n-\ell}]
 \left[
 \exp\left(g_\Theta(z) - \sum_{m=1}^{b(n)} \frac{\theta_m}{m}z^m \right)
  \right].
   \label{eq:for_dtv_17}
\end{align}
We do this similar as for the proof of Theorem~\ref{thm:aux_asypmtotic}.
Cauchy's integral formula and the change of variable $z=e^{-w}$ gives us 
\begin{align}
I^{tv}_{n}
=
  \frac{1}{2\pi i}
 \int_{\gamma''}
  \exp\left( 
  P(-\log(w))+ (n-\ell) w 
  - 
  \sum_{m=1}^{b(n)} \frac{\theta_m}{m} e^{-mw}+  O(w)    
  \right) \, dw,
\end{align}
where $\gamma''$ is as in the proof of Theorem~\ref{thm:aux_asypmtotic}.
Thus we have
$\gamma'' = \gamma''_1\cup \gamma''_2\cup \gamma''_3$ with 
 \begin{align*}
\gamma''_1&:=\{w=(-\pi+x)i,x\in[0,\pi-e^{-r}]\},\\
\gamma''_2&:=\{w=e^{-r}e^{i\varphi},\varphi\in[-\pi/2 ,\pi/2\},\\
\gamma''_3&:=\{w=ix ,x\in[e^{-r},\pi]\}
 \end{align*}
and $r$ the solution of the equation $ne^{-r}= P'(r)$.
We now split the curve $\gamma''$ into two parts. 
Explicitly, we denote by $\gamma''^2$ the part of $\gamma''$ consisting of 
all $w$ with $w$ with $|w|\leq \frac{1}{b(n)\log^{2k}(b(n))}$
and by $\gamma''^{1,3}$ the remaining parts of $\gamma''$.

We begin by computing the integral over $\gamma''^2$.
For this, we need two observations.
First, we have $w\ell =o(1)$ for all $\ell \in J$ and all $w$ in $\gamma''^2$.
Further, using \eqref{eq:alternative_cycle_weights2dtv}, we get for all $w$ in $\gamma''^2$
\begin{align}
 \sum_{m=1}^{b(n)} \frac{\theta_m}{m}\exp(-mw)
 &=
 \sum_{m=1}^{b(n)} \frac{\theta_m}{m} \left(1+O\left(mw \right)\right)
 =
 \sum_{m=1}^{b(n)} \frac{\theta_m}{m}  + O\left(w  \sum_{m=1}^{b(n)} \theta_m\right)\nonumber\\
 &=
  \sum_{m=1}^{b(n)} \frac{\theta_m}{m}  + O\big(w b(n)\log^{k}(b(n))\big) 
  =
  \sum_{m=1}^{b(n)} \frac{\theta_m}{m}  + o\big( 1\big).
  \label{eq:be_sum_1}
\end{align}
Inserting this into the integral over $\gamma''^2$, we obtain
\begin{align*}
I^{2}_{n}
  &:=
  \frac{1}{2\pi i}
 \int_{\gamma''^2}
  \exp\left( 
  P(-\log(w))+ (n-\ell) w 
  - 
  \sum_{m=1}^{b(n)} \frac{\theta_m}{m} e^{-mw}+  O(w)    
  \right) \, dw\\
   &=
     \frac{\exp\left( -\sum_{m=1}^{b(n)} \frac{\theta_m}{m}  +o(1)\right)}{2\pi i}
 \int_{\gamma''^2}
  \exp\left( 
  P(-\log(w))+ nw +  O(w)    
  \right) \, dw.
\end{align*}
This is now the same integral as in the proof of Theorem~\ref{thm:aux_asypmtotic}. Thus we get 
\begin{align}
I^{2}_{n}
  &=
  \exp\left( -\sum_{m=1}^{b(n)} \frac{\theta_m}{m}  +o(1)\right)
  \cdot
\frac{\exp\left(P(r)+ ne^{-r}\right) }{e^{r}\sqrt{2\pi P''(r)+2\pi ne^{-r}}} \left(1+ O(\log^{-k/2}(n)) \right)\nonumber\\
  &=
  \exp\left( -\sum_{m=1}^{b(n)} \frac{\theta_m}{m}  +o(1)\right)
  \cdot
  h_n\left(1+ O(\log^{-k/2}(n)) \right).
\label{eq:I_tv2}
\end{align}
Inserting \eqref{eq:I_tv2} into \eqref{eq:dtv_sh9}, we obtain
\begin{align}
 \frac{\Pb{T_{b(n)n}=n-\ell}}{\Pb{T_{0n}=n}}
 =
 1+ o(1) 
 + 
  \frac{\exp\left( \sum_{m=1}^{b(n)} \frac{\theta_m}{m}\right)}{h_n} I^{1,3}_{n},
\end{align}
where $I^{1,3}_{n}$ denotes the integral over $\gamma''^{1,3}$.
It thus remains to show that $\frac{\exp\left( \sum_{m=1}^{b(n)} \frac{\theta_m}{m}\right)}{h_n} I^{1,3}_{n}$ is also $o(1)$.
Clearly, $\gamma''^{1,3}$ is a part of $\gamma''_1$ and $\gamma''_3$. 
Thus we obtain as in the proof of Theorem~\ref{thm:aux_asypmtotic} that
\begin{align*}
|I^{1,3}_{n}| 
&=
 \left|\frac{1}{2\pi i}
 \int_{\gamma''^{1,3}}
  \exp\left( 
  P(-\log(w))+ (n-\ell) w 
  - 
  \sum_{m=1}^{b(n)} \frac{\theta_m}{m} e^{-mw}+  O(w)    
  \right) \, dw\right|\\
&\leq
 \frac{2}{2\pi}\int_{ \frac{1}{b(n)\log^{2k}(b(n))}}^\pi 
 	\exp\left( \Re\big(P(-\log(x)-i\pi/2)\big)  - 
 	  \Re\left(\sum_{m=1}^{b(n)} \frac{\theta_m}{m} e^{-mix}\right)\right) \, dx.
\end{align*}
We have by assumption $b(n)=O(n^c)$ with $c< (3k+3)^{-\frac{1}{k+1}}$. This implies
\begin{align*}
 \Re\left( -\sum_{m=1}^{b(n)} \frac{\theta_m}{m} e^{-imx} \right) 
 &\leq
 \sum_{m=1}^{b(n)} \frac{\theta_m}{m}  
 =
 \log^{k+1}b(n)+ O\left(\log^{k}b(n)\right )\\
 &\leq
 c^{k+1}\log^{k+1}n+ O\left(\log^{k}(n)\right ).
\end{align*}
Together with the inequality \eqref{eq:bound_gamma3} and Lemma~\ref{lem:int_exp_P}, we obtain for each $\epsilon>0$ 
\begin{align}
	|I^{1,3}_{n}|
	&\leq 
	 \frac{\exp\left(  c^{k+1}\log^{k+1}n+ O\left(\log^{k}(n)\right )  \right)}{\pi}
		\int_{n^{-c-\epsilon}}^\pi 
	 \exp\left(P(-\log(x)) - \frac{9}{8} P''(-\log(x)) \right) \, dx\nonumber\\
      &=
\frac{\exp\left( c^{k+1}\log^{k+1}n+ O\left(\log^{k}(n)\right )\right)}{\pi} 
\int_{e^{-\pi}}^{(c+\epsilon) \log n} \exp\Bigl(  P(y) -  \frac{9}{8} P''(y) \Bigr) e^{-y} \, dy\nonumber\\
&=
\frac{\exp\left( 2(c+\epsilon)^{k+1}\log^{k+1}n+ O\left(\log^{k}(n)\right )\right)}{\pi  P'((c+\epsilon) \log n) }.
\label{eq:I_tv3}
\end{align}
Since $c<  (3k+3)^{-\frac{1}{k+1}}$ and $P(r)= \frac{1}{k+1}r^{k+1} +O(r^k)$, we get for $\epsilon$ small enough
\begin{align}
 \left|I^{1,3}_{n} \exp\left( \sum_{m=1}^{b(n)} \frac{\theta_m}{m}\right)\right|
 &\leq
 \frac{\exp\left( 3(c+\epsilon)^{k+1}\log^{k+1}n+ O\left(\log^{k}(n)\right )\right)}{\pi  P'((c+\epsilon) \log n) }\\
 &\leq 
 \exp\left( (1-\epsilon')P(r) \right)
\end{align}
for some $\epsilon'>0$. Recall, we have
\begin{align*}
 h_n=\frac{\exp\left(P(r)+ ne^{-r}\right) }{e^{r}\sqrt{2\pi P''(r)+2\pi ne^{-r}}} \left(1+ O(\log^{-k/2}(n))\right).
\end{align*}
This implies that
\begin{align}
 \frac{\exp\left( \sum_{m=1}^{b(n)} \frac{\theta_m}{m}\right)}{h_n} I^{1,3}_{n}
 =
 O\left(\exp\left( -\frac{\epsilon'}{2}P(r) \right) \right) 
 =
 o(1).
\end{align}
This completes the proof of Theorem~\ref{thm:DTV}.
\end{proof}

\bibliography{literatur}
\bibliographystyle{abbrv}

\end{document}